\theoremstyle{plain}
\newtheorem{thm}{Theorem}[section]
\newtheorem{lem}[thm]{Lemma}
\newtheorem{prop}[thm]{Proposition}
\newtheorem{cor}[thm]{Corollary}
\theoremstyle{definition}
\newtheorem{defi}[thm]{Definition}
\newtheorem{defs}[thm]{Definitions}
\newtheorem{ex}[thm]{Example}
\newtheorem{exs}[thm]{Examples}
\newtheorem{ntn}[thm]{Notation}
\newtheorem{rmds}[thm]{Reminders}
\theoremstyle{remark}
\newtheorem*{note}{Note}
\newtheorem{rmk}[thm]{Remark}
\newtheorem{disc}[thm]{Discussion}
\DeclareMathOperator{\height}{ht} 
\DeclareMathOperator{\ext}{exten} 
\DeclareMathOperator{\Ker}{Ker}
\DeclareMathOperator{\Supp}{Supp} 
   \DeclareMathOperator{\Max}{Max}
 \DeclareMathOperator{\Var}{Var}
\DeclareMathOperator{\HSL}{HSL} \DeclareMathOperator{\Spec}{Spec}
\DeclareMathOperator{\Quot}{Quot} 
 \DeclareMathOperator{\ann}{ann}
\DeclareMathOperator{\grann}{gr-ann} \DeclareMathOperator{\Min}{Min}
\def\Z{\mathbb Z}
\def\N{\mathbb N}
\def\F{\mathbb F}
\def\K{\mathbb K}
\def\fa{{\mathfrak{a}}}
\def\fA{{\mathfrak{A}}}
\def\fb{{\mathfrak{b}}}
\def\fB{{\mathfrak{B}}}
\def\fc{{\mathfrak{c}}}
\def\fC{{\mathfrak{C}}}
\def\fd{{\mathfrak{d}}}
\def\fh{{\mathfrak{h}}}
\def\fk{{\mathfrak{k}}}
\def\m{{\mathfrak{m}}}
\def\fm{{\mathfrak{m}}}
\def\fM{{\mathfrak{M}}}
\def\fn{{\mathfrak{n}}}
\def\fp{{\mathfrak{p}}}
\def\fP{{\mathfrak{P}}}
\def\fq{{\mathfrak{q}}}
\def\ft{{\mathfrak{t}}}
\def\nn{\relax\ifmmode{\mathbb N_{0}}\else$\mathbb N_{0}$\fi}
\def\lra{\longrightarrow}
\begin{document}

\title[Big test
elements for some excellent rings]{Big tight closure test elements for\\
some non-reduced excellent rings}
\author{RODNEY Y. SHARP}
\address{{\it School of Mathematics and Statistics\\
University of Sheffield\\ Hicks Building\\ Sheffield S3 7RH\\ United
Kingdom}} \email{R.Y.Sharp@sheffield.ac.uk}

\subjclass[2010]{Primary 13A35, 16S36, 13D45, 13E05, 13E10, 13H10;
Secondary 13J10}

\date{\today}

\keywords{Commutative Noetherian ring, prime characteristic,
Frobenius homomorphism, tight closure, test element, excellent ring,
condition $(R_0)$, Frobenius skew polynomial ring, Gorenstein ring,
weakly $F$-regular ring.}

\begin{abstract} This paper is concerned with existence of big tight closure test
elements for a commutative Noetherian ring $R$ of prime
characteristic $p$. Let $R^{\circ}$ denote the complement in $R$ of
the union of the minimal prime ideals of $R$. A big test element for
$R$ is an element of $R^{\circ}$ which can be used in every tight
closure membership test for {\em every\/} $R$-module, and not just
the finitely generated ones. The main results of the paper are that,
if $R$ is excellent and satisfies condition $(R_0)$, and $c \in
R^{\circ}$ is such that $R_c$ is Gorenstein and weakly $F$-regular,
then some power of $c$ is a big test element for $R$ if (i) $R$ is a
homomorphic image of an excellent regular ring of characteristic $p$
for which the Frobenius homomorphism is intersection-flat, or (ii)
$R$ is $F$-pure, or (iii) $R$ is local. The Gamma construction is
not used.
\end{abstract}

\maketitle

\setcounter{section}{-1}
\section{\it Introduction}
\label{intro}

This paper is concerned with existence of tight closure test
elements in certain (commutative Noetherian) rings of prime
characteristic.

Throughout the paper, $R$ will denote a commutative Noetherian ring
of prime characteristic $p$. We shall only assume that $R$ is local
when this is explicitly stated; then, the notation `$(R,\fm)$' will
denote that $\fm$ is the maximal ideal of $R$. We use $R^{\circ}$ to
denote the complement in $R$ of the union of the minimal prime
ideals of $R$. Let $\fa$ be a proper ideal of $R$. For $n \in \nn$
(we use $\nn$ (respectively $\N$) to denote the set of non-negative
(respectively positive) integers), the {\em $n$th Frobenius power\/}
$\fa^{[p^n]}$ of $\fa$ is the ideal of $R$ generated by all $p^n$th
powers of elements of $\fa$.

An element $r \in R$ belongs to the {\em tight closure $\fa^*$ of
$\fa$\/} if and only if there exists $c \in R^{\circ}$ such that
$cr^{p^n} \in \fa^{[p^n]}$ for all $n \gg 0$. We say that $\fa$ is
{\em tightly closed\/} if $\fa^* = \fa$. The ring $R$ is said to be
{\em weakly $F$-regular\/} if every ideal of $R$ is tightly closed.
Regular rings of characteristic $p$ are weakly $F$-regular. The
theory of tight closure was invented by M. Hochster and C. Huneke
\cite{HocHun90}, and many applications have been found for the
theory: see \cite{Hunek96} and \cite{Hunek98}.

The concept of test element plays an important r\^ole in the theory.
A {\em test element for ideals\/} for $R$ is an element $c' \in
R^{\circ}$ such that, for {\em every\/} ideal $\fb$ of $R$ and {\em
every\/} $r \in \fb^*$ it is the case that $c'r^{p^n} \in
\fb^{[p^n]}$ for {\em all\/} $n \in \nn$.

We are now going to reformulate the definitions of tight closure and
test element for ideals in terms of the Frobenius skew polynomial
ring. We shall always denote by $f:R\lra R$ the Frobenius
homomorphism, for which $f(r) = r^p$ for all $r \in R$. The {\em
Frobenius skew polynomial ring over $R$\/} is the skew polynomial
ring $R[x,f]$ associated to $R$ and $f$ in the indeterminate $x$
over $R$. Recall that $R[x,f]$ is, as a left $R$-module, freely
generated by $(x^i)_{i \in \nn}$,
 and so consists
 of all polynomials $\sum_{i = 0}^n r_i x^i$, where  $n \in \nn$
 and  $r_0,\ldots,r_n \in R$; however, its multiplication is subject to the
 rule
 $
  xr = f(r)x = r^px$ for all $r \in R.$
Note that $R[x,f]$ can be considered as a positively-graded ring
$R[x,f] = \bigoplus_{n=0}^{\infty} R[x,f]_n$, with $R[x,f]_n = Rx^n$
for $n \in \nn$. If we endow $Rx^n$ with its natural structure as an
$(R,R)$-bimodule (inherited from its being a graded component of
$R[x,f]$), then $Rx^n$ is isomorphic (as $(R,R)$-bimodule) to $R$
viewed as a left $R$-module in the natural way and as a right
$R$-module via $f^n$, the $n$th iterate of the Frobenius ring
homomorphism.

Now $R[x,f]\otimes_R(R/\fa)$ can be viewed as an $\nn$-graded left
$R[x,f]$-module
$$
R[x,f]\otimes_R(R/\fa) = \bigoplus_{n\in\nn}Rx^n\otimes_R(R/\fa)
$$ in a natural way: for $r,r' \in R$ and $j \in \nn$, we have
$x(rx^j\otimes (r' + \fa)) = xrx^j\otimes (r' + \fa) =
r^px^{j+1}\otimes (r' + \fa).$ Since the $R$-module
$Rx^n\otimes_R(R/\fa)$ (for $n \in \nn$) is isomorphic to
$R/\fa^{[p^n]}$, we can reformulate the definitions of tight closure
and test element for ideals in the following ways. An element $r \in
R$ belongs to the tight closure $\fa^*$ of $\fa$ if and only if
there exists $c \in R^{\circ}$ such that, in the left
$R[x,f]$-module $R[x,f]\otimes_R(R/\fa)$, the element $1 \otimes (r
+ \fa) \in (R[x,f]\otimes_R(R/\fa))_0$ is annihilated by $cx^n$ for
all $n \gg 0$, while a test element for ideals for $R$ is an element
$c' \in R^{\circ}$ such that, for {\em every\/} ideal $\fb$ of $R$
and {\em every\/} $r \in \fb^*$, the element $1 \otimes (r + \fb)
\in (R[x,f]\otimes_R(R/\fb))_0$ is annihilated by $c'x^n$ for {\em
all\/} $n \in\nn$.

Let $L$ and $M$ be $R$-modules and let $K$ be a submodule of $L$.
Again there is a natural structure as $\nn$-graded left
$R[x,f]$-module on $R[x,f]\otimes_RM =
\bigoplus_{n\in\nn}(Rx^n\otimes_RM)$. An element $m \in M$ belongs
to $0^*_M$, the {\em tight closure of the zero submodule in $M$\/},
if and only if there exists $c \in R^{\circ}$ such that the element
$1 \otimes m \in (R[x,f]\otimes_RM)_0$ is annihilated by $cx^j$ for
all $j \gg 0$: see Hochster--Huneke \cite[\S 8]{HocHun90}.
(Incidentally, in discussions of this type, we shall often tacitly
identify
$$(R[x,f]\otimes_RM)_0 = Rx^0\otimes_RM = R\otimes_RM$$ with $M$ in
the obvious way.) Furthermore, the tight closure $K^*_L$ of $K$ in
$L$ is the inverse image, under the natural epimorphism $L \lra
L/K$, of $0^*_{L/K}$, the tight closure of $0$ in $L/K$. In general,
we have $K \subseteq K^*_L$; we say that $K$ is {\em tightly closed
in $L$\/} if $K = K^*_L$, that is, if and only if $0^*_{L/K} = 0$.

A {\em test element for modules\/} for $R$ is an element $c' \in
R^{\circ}$ such that, for {\em every finitely generated\/}
$R$-module $M$ and {\em every\/} $j \in \nn$, the element $c'x^j$
annihilates $1 \otimes m \in (R[x,f]\otimes_RM)_0$ for {\em every\/}
$m \in 0^*_M$. Note that a test element for ideals for $R$ is an
element $c' \in R^{\circ}$ such that, for {\em every cyclic\/}
$R$-module $M$ and {\em every\/} $j \in \nn$, the element $c'x^j$
annihilates $1 \otimes m \in (R[x,f]\otimes_RM)_0$ for {\em every\/}
$m \in 0^*_M$. Recall that a commutative ring is said to be {\em
reduced\/} if it has no non-zero nilpotent element. When $R$ is
reduced and excellent, the concepts of test element for modules and
test element for ideals for $R$ coincide: see \cite[Discussion (8.6)
and Proposition (8.15)]{HocHun90}.

A {\em big test element\/} for $R$ is defined to be an element $c
\in R^{\circ}$ such that, for {\em every\/} $R$-module $M$ and {\em
every\/} $j \in \nn$, the element $cx^j$ annihilates $1 \otimes m
\in (R[x,f]\otimes_RM)_0$ for {\em every\/} $m \in 0^*_M$. A big
test element $c$ for $R$ is said to be a {\em locally stable
(respectively, completely stable) big test element for $R$} if and
only if, for every $\fp \in \Spec (R)$, the natural image $c/1$ of
$c$ in $R_{\fp}$ (respectively, $\widehat{R_{\fp}}$) is a big test
element for $R_{\fp}$ (respectively, for $\widehat{R_{\fp}}$).

We note here that the definition of $F$-purity can be formulated in
terms of left modules over the Frobenius skew polynomial ring: $R$
is {\em $F$-pure\/} if, for each $R$-module $N$, the map $\psi_N : N
\lra Rx\otimes_RN$ for which $\psi_N(g) = x\otimes g$ for all $g \in
N$ is injective.

It has been generally accepted that the best known results about the
existence of test elements are those of Hochster--Huneke in \cite[\S
6]{HocHun94}.  Here is one of their results. (For $c\in R$, we
denote by $R_c$ the ring of fractions of $R$ with respect to the
multiplicatively closed set consisting of the powers of $c$.)

\begin{thm}[M. Hochster and C. Huneke {\cite[Theorem
(6.1)(b)]{HocHun94}}] \label{in.1} {\rm (See also \cite[p.\
18]{Hunek96}.)} A reduced algebra $R$ of finite type over an
excellent local ring of characteristic $p$ has a test element for
modules.

In fact, if $c \in R^{\circ}$ is such that $R_c$ is Gorenstein and
weakly $F$-regular (and, in particular, if $c \in R^{\circ}$ is such
that $R_c$ is regular), then some power of $c$ is a test element for
modules for $R$.
\end{thm}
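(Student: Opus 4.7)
The plan follows Hochster--Huneke's original strategy: reduce to the complete reduced local case via the Gamma construction, then exploit the Gorenstein and weakly $F$-regular hypothesis on $R_c$ at the level of the injective hull of the residue field.

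The first step is the reduction to the complete reduced local case. Test-element status for (finitely generated) modules can be checked locally and after completion, so it suffices to produce, for each maximal ideal $\fm$ of $R$, a power of $c$ that tests over $\widehat{R_\fm}$. Because completions of excellent reduced rings need not be reduced, I would apply the Gamma construction of Hochster--Huneke: for a cofinite subset $\Gamma$ of a $p$-base of a coefficient field of the relevant completion, the ring $(\widehat{R_\fm})^{\Gamma}$ is complete, reduced, local, faithfully flat over $\widehat{R_\fm}$, and a homomorphic image of a complete regular local ring. For $\Gamma$ sufficiently small, the image of $c$ retains the property that inverting it yields a Gorenstein, weakly $F$-regular ring; a test element at this level descends to $\widehat{R_\fm}$ by faithful flatness, and a uniform exponent over $\Spec R$ then follows by standard finiteness arguments.

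The second step, in the complete reduced local case $(S,\fn)$ with $S = T/I$ for a complete regular local ring $T$, is to reduce the test-element statement to an annihilator statement on $E = E_S(S/\fn)$. Via the dualisation techniques of \cite[\S 8]{HocHun90}, producing a test element for modules over $S$ is equivalent to finding some power $c^N$ which, together with its $x$-iterates under the $R[x,f]$-action, annihilates $0^*_E$. Here I would invoke the theorem of Hochster--Huneke that a Gorenstein weakly $F$-regular ring is strongly $F$-regular, so every $S_c$-module has trivial tight closure; this yields that each individual element of $0^*_E$ is killed by some (a priori element-dependent) power of $c$.

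The main obstacle, and the step I expect to be the hardest, is upgrading this pointwise annihilation to a uniform annihilation by a single $c^N$ across all elements of $0^*_E$ and all $x$-iterates simultaneously. The Gorenstein hypothesis on $S_c$ is essential here, as it allows one to identify $0^*_E$, or rather its closure under $R[x,f]$, with a module whose Matlis dual is finitely generated over $T$. A Noetherian stabilisation of the ascending chain of kernels of multiplication by $c^N$ on that dual will then deliver the required uniform exponent and complete the proof.
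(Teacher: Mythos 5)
This theorem is \emph{cited} from Hochster--Huneke, not proved in the paper: the attribution to \cite[Theorem (6.1)(b)]{HocHun94} in the theorem header and the subsequent remark that the original proof ``is quite difficult, because it depends on the so-called `Gamma construction''' make clear that Theorem \ref{in.1} is stated as background. Your sketch is, as far as it goes, a reasonable r\'esum\'e of Hochster--Huneke's own argument, and in particular your first step (pass to $(\widehat{R_\fm})^\Gamma$ for $\Gamma$ cofinite in a $p$-base, so as to get a complete reduced $F$-finite local ring and descend) is exactly the Gamma construction that the paper is at pains to avoid. In that sense you have supplied a proof, but of the wrong kind: the whole motivation of this paper (and of \cite{Fpurhastest}) is to establish existence of (big) test elements \emph{without} using Gamma.

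What the paper develops instead is quite different in flavour, and it is worth knowing what that alternative route buys. The key technical apparatus is the theory of graded annihilators of left $R[x,f]$-modules from \cite{ga}, the internal tight closure of zero $\Delta(H)$ (Definition \ref{jan.3}), and the Embedding Theorem \ref{pl.5b}, which embeds $R[x,f]\otimes_R M$ for an arbitrary $R$-module $M$ into a graded product of $n$-place extensions of shifts of an injective-cogenerator-based module $H$; the upshot (Theorem \ref{pl.5c}) is that existence of big test elements is equivalent to positivity of height of $(0:_R\Delta(R[x,f]\otimes_RE))$ for $E$ an injective cogenerator. In the complete local domain case one writes $R=S/\fa$ with $S$ complete regular local, uses Fedder's criterion (via Proposition \ref{jun.1}) to build a nontrivial $R[x,f]$-module structure on $E_R(R/\fm)$ from a $u\in(\fa^{[p]}:\fa)\setminus\fq^{[p]}$, and then uses the Hartshorne--Speiser--Lyubeznik Theorem (through Corollary \ref{jan.26}) to get positivity of $\height(0:_R\Delta(E_R(R/\fm)))$. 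The reduction to that case uses Corollary \ref{sr.2}/Proposition \ref{jun.101} (Aberbach's flat-base-change result for weak $F$-regularity) in place of Gamma to control $\widehat{R}_c$. The closest analogue in this paper of the statement you were asked about is Theorem \ref{jul.8}: it is weaker than Hochster--Huneke in requiring the residue field of the base excellent local ring to be $F$-finite, but stronger in two ways — it yields \emph{big} test elements and applies to rings satisfying $(R_0)$ rather than only reduced rings — and crucially its proof is Gamma-free. So your proposal is not \emph{wrong} as an account of the original Hochster--Huneke proof, but it does not engage with what this paper actually does, and it relies on the single construction this paper was written to circumvent.

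One further caution on your sketch itself: the implication ``Gorenstein and weakly $F$-regular $\Rightarrow$ strongly $F$-regular'' that you invoke in the second step needs $F$-finiteness; you would need to check that this is available at the $(\widehat{R_\fm})^\Gamma$ level before using it, and you should be explicit that the uniform exponent you extract at the end (your ``main obstacle'' paragraph) needs some form of Matlis-dual Noetherianity that, as you note, hinges on the Gorenstein hypothesis — this is exactly where the original argument is delicate.
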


However, it is also generally accepted that the proof of the above
theorem in \cite{HocHun94} is quite difficult, because it depends on
the so-called `Gamma construction' of \cite[\S 6]{HocHun94}.

In \cite[p.\ 201]{Hunek98}, in his discussion of test elements,
Huneke writes as follows. `Of course, the best theorem [about test
elements] one wants is one which says that if $R$ is reduced (and
excellent, perhaps) and if [$c \in R^{\circ}$ is such that] $R_c$ is
weakly $F$-regular, then $c$ has a power which is a test element.
This is an open question.'

This paper and paper \cite{Fpurhastest} arose from attempts to make
some progress on that open question, and also, at the same time, to
attempt to provide proofs about existence of test elements that do
not use the Gamma construction. The main result of
\cite{Fpurhastest} is the following.

\begin{thm}[R. Y. Sharp {\cite[Theorem
4.16]{Fpurhastest}}] \label{in.11} If $R$ is $F$-pure and excellent
(but not necessarily local), then $R$ has a big test element.

In fact, if $c \in R^{\circ}$ is such that $R_c$ is regular, then
$c$ itself is a big test element for $R$.
\end{thm}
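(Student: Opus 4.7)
My plan is to establish $c \in \tau^b(R)$, where
\[
\tau^b(R) := \{c' \in R : c' \cdot 0^*_N = 0 \text{ for every $R$-module $N$}\}.
\]
This implies $c$ is a big test element: for $m \in 0^*_M$ and $j \in \nn$, the isomorphism $Rx^s \otimes_R (Rx^j \otimes_R M) \cong Rx^{s+j} \otimes_R M$ (sending $rx^s \otimes (r' x^j \otimes m') \mapsto r(r')^{p^s} x^{s+j} \otimes m'$) carries a tight-closure witness $c_0 \in R^{\circ}$ for $m$ to a witness for $x^j \otimes m$, so $x^j \otimes m \in 0^*_{Rx^j \otimes_R M}$; then $c \in \tau^b(R)$ gives $cx^j \otimes m = c \cdot (x^j \otimes m) = 0$.

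I first show that $F$-purity forces $\tau^b(R)$ to be radical, via the implication $r^p \in \tau^b(R) \Rightarrow r \in \tau^b(R)$. Given any $\ell \in 0^*_L$, the reduction above places $x \otimes \ell \in 0^*_{Rx \otimes_R L}$, so $r^p (x \otimes \ell) = r^p x \otimes \ell = 0$; the identity $r^p x = x r$ in $R[x,f]$ rewrites this as $x \otimes r\ell = 0$, and the $F$-purity injection $L \hookrightarrow Rx \otimes_R L$, $\ell' \mapsto x \otimes \ell'$, then forces $r\ell = 0$. Iterating (and using that $F$-pure $R$ is reduced) gives $\tau^b(R) = \sqrt{\tau^b(R)}$.

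Next I exploit that $R_c$ is regular via localization. For any $n \in 0^*_N$, the witness $c_0$ has image $c_0/1 \in R_c^{\circ}$, so $n/1 \in N_c$ lies in $0^*_{N_c}$. Since $R_c$ is regular, tight closure of zero is trivial in every $R_c$-module (reducing to finitely-generated modules via Frobenius flatness over regular rings), so $n/1 = 0$ in $N_c$ and hence $c^{k(n)} n = 0$ in $N$ for some $k(n) \geq 1$; equivalently, $c \in \sqrt{\ann_R(n)}$ for every $n \in 0^*_N$. To close the gap between this pointwise bound and $c \in \tau^b(R)$, I invoke excellence of $R$: the non-regular locus is closed, cut out by an ideal $J \subseteq R$, and $R_c$ regular forces $V(J) \subseteq V(c)$, so $c \in \sqrt{J}$ and hence $c^{n_0} \in J$ for some $n_0 \geq 1$.

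The main obstacle is proving the intermediate claim $J \subseteq \tau^b(R)$ --- that every defining element of the non-regular locus of an $F$-pure excellent ring lies in the big test ideal. Granting this, $c^{n_0} \in J \subseteq \tau^b(R)$ combined with the radicality $\tau^b(R) = \sqrt{\tau^b(R)}$ yields $c \in \tau^b(R)$, finishing the proof. The difficulty lies in bridging the pointwise-in-$n$ localization bound $c'^{k(n)} n = 0$ (for $c' \in R^{\circ}$ with $R_{c'}$ regular) into the uniform annihilator statement $c' \in \tau^b(R)$; I expect this to require combining $F$-purity (controlling the chain of annihilators $\ann_R(x^s \otimes n)$ under Frobenius shifts) with excellence (to extract uniform bounds from the openness of the regular locus), perhaps via a universal $R[x,f]$-module construction that globalizes the verification.
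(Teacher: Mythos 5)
Your proposal correctly isolates several useful reductions but leaves the crux of the theorem unproved, and you say so yourself. Let me make the size of the gap precise. You reduce the problem to showing $J \subseteq \tau^b(R)$, where $J$ is the radical ideal cutting out the non-regular locus. But since $R$ is reduced ($F$-pure implies reduced), $J$ has positive height, so $J$ is generated by $J \cap R^{\circ}$, and for any $c' \in J \cap R^{\circ}$ the ring $R_{c'}$ is regular. Thus the claim ``$J \subseteq \tau^b(R)$'' is \emph{exactly} the claim that every $c'$ with $R_{c'}$ regular lies in $\tau^b(R)$ --- which is the theorem you set out to prove, modulo passing to a power (which your radicality lemma then removes). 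You have not reduced the problem to something easier; you have restated it.

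The genuinely hard content, which your sketch never touches, is the passage from the \emph{pointwise} statement ``for each $n \in 0^*_N$ there is $k(n)$ with $c^{k(n)}n = 0$'' to a \emph{uniform} bound independent of $n$ and of $N$. This is precisely the ``test element'' phenomenon, and it is not a formal consequence of localization plus openness of the regular locus. The paper's route (Corollaries \ref{may.1} and \ref{may.2}, resting on Theorem \ref{jan.17gf} and the embedding machinery of \S\ref{set}) obtains the uniform bound by a fundamentally different mechanism: it equips $E_R(R/\fm)$ with a left $R[x,f]$-module structure via Cohen's structure theorem, Fedder-type criteria, and Frobenius roots, and then uses Matlis duality and the embedding Theorem \ref{pl.5b} to show that the $R$-annihilator of $\Delta(R[x,f]\otimes_R E)$ is the big test ideal and that it is not contained in any prime $\fp$ with $R_{\fp}$ regular. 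That is where the uniformity comes from, and nothing in your outline substitutes for it. Your observations that $\tau^b(R)$ is radical under $F$-purity (via the identity $r^px = xr$ and the injection $\ell \mapsto x\otimes\ell$) and that $c \in \sqrt{J}$ are both correct and would be useful \emph{after} the uniform bound is in hand --- indeed they play a role analogous to Corollary \ref{jul.26} in eliminating the need for a power of $c$ --- but by themselves they do not prove the theorem.
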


The method of proof of Theorem \ref{in.11} in \cite{Fpurhastest} can
avoid use of the Gamma construction; the methods of
\cite{Fpurhastest} use the theory of graded annihilators of left
modules over the Frobenius skew polynomial ring developed in
\cite{ga}.

The main results of this paper establish the existence of big test
elements for certain excellent rings that satisfy the condition
$(R_0)$. Recall that $R$ is said to satisfy the condition $(R_0)$ if
and only if $R_{\fp}$ is a field for every minimal prime ideal $\fp$
of $R$, or, equivalently, if all the primary components
corresponding to minimal prime ideals of the zero ideal of $R$ are
prime. The main result of this paper in the local case is the
following.

\vspace{0.1in}

\noindent{\bf Theorem.} {\it Suppose that $(R,\fm)$ is excellent and
local, and satisfies the condition $(R_0)$. Then $R$ has a big test
element.

In fact, for each $c \in R^{\circ}$ for which $R_c$ is Gorenstein
and weakly $F$-regular, some power of $c$ is a big test element for
$R$.}

\vspace{0.1in}

This result is presented in Theorem \ref{jan.17gf} below. The
preparatory Theorem \ref{jan.17} establishes that, for an excellent
local $R$ satisfying $(R_0)$, if $c' \in R^{\circ}$ is such that
$R_{c'}$ is regular, then some power of $c'$ is a big test element
for $R$; the latter theorem is used in \S \ref{wys} to produce a
test element that enables one to show, without use of the Gamma
construction, that if $R_c$ is Gorenstein and weakly $F$-regular,
then $\widehat{R}_c$ is Gorenstein and weakly $F$-regular too; the
proof of Theorem \ref{jan.17gf} can then be completed.

Theorem \ref{jan.17gf} is not covered by the Hochster--Huneke
Theorem \ref{in.1} mentioned above because, firstly, it establishes
the existence of test elements in an excellent local ring (of
characteristic $p$) which satisfies the condition $(R_0)$, and such
a ring need not be reduced, and, secondly, it asserts the existence
of big test elements. However, with reference to this second point,
although \cite[Theorem (6.1)(b)]{HocHun94} does not mention big test
elements, there is a version of the Hochster--Huneke Theorem stated
on page 132 of a manuscript (from a lecture course) entitled
`Foundations of tight closure theory' on Hochster's homepage at
\begin{verbatim} http://www.math.lsa.umich.edu/~hochster/711F07/fndtc.pdf
\end{verbatim}
and that version indicates that Hochster and Huneke are aware that
their methods yield big test elements.

Some comments about the hypothesis that $R$ satisfies the condition
$(R_0)$ (in Theorem \ref{jan.17gf}) are in order, because part of
the `folklore' that has grown up around tight closure is the
statement that `if $R$ has a test element, then it must be reduced':
see \cite[p.\ 20]{Hunek96}, \cite[p.\ 201]{Hunek98} and \cite[p.\
3152]{LyuSmi01}. In fact, this statement is not true, as the
following example shows.

\begin{ex}\label{in.2} Let $S$ be the ring of formal power series
$\F_2[[W,Y]]$ over the field $\F_2$ of two elements, and set $\fa :=
(W^2,WY)$ and $R := S/\fa$. Note that $R$ is not reduced. We show
that the natural image $y$ of $Y$ in $R$ is a test element for
ideals for $R$.

Let $w$ denote the natural image of $W$ in $R$. Each element of $R$
can be uniquely written in the form $\beta w +
\sum_{i=0}^{\infty}\alpha_iy^i$ where $\beta, \alpha_0, \alpha_1,
\alpha_2, \ldots \in \F_2$. It follows easily that the nilradical
$\fn$ of $R$ is $Rw$, and that $R/\fn = R/Rw \cong \F_2[[Z]]$ (where
$Z$ is a new variable), a regular local ring. Therefore each ideal
of $R/\fn$ is tightly closed. A consequence is that, for each ideal
$\fb$ of $R$, we have $\fb^* = \fb + Rw$. Thus, in order to show
that $y$ is a test element for ideals for $R$, it is sufficient to
show that, for each ideal $\fb$ of $R$ that does not contain $w$, we
have $yw^{2^n} \in \fb^{[2^n]}$ for all $n \in \nn$. However, this
is obvious, because $yw= 0$.
\end{ex}

The lemma below shows that if $R$ has a test element for ideals,
then it satisfies the condition $(R_0)$.  This condition is weaker
than `$R$ is reduced'.

\begin{lem}\label{in.3} If $R$ has a test element $c$ for ideals,
then $R$ satisfies the condition $(R_0)$.
\end{lem}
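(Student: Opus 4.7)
The plan is to exploit the $n=0$ instance of the test element axiom, which forces $c \cdot 0^* = 0$, and then to translate this annihilator statement into the condition $\fp R_{\fp} = 0$ for each minimal prime $\fp$ of $R$.

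First, I would observe that the nilradical $\fn$ of $R$ is automatically contained in $0^*$: if $a \in R$ satisfies $a^N = 0$, then $a^{p^n} = 0$ for every $n$ with $p^n \geq N$, so $1 \cdot a^{p^n} \in 0 = 0^{[p^n]}$ for all sufficiently large $n$, and $1 \in R^{\circ}$ witnesses that $a \in 0^*$. Applying the test element property with $\fb = 0$ (and noting that $\fb^{[p^0]} = \fb$ and $r^{p^0}=r$) then yields $c a = 0$ for every $a \in 0^*$; in particular $\fn \subseteq \ann_R(c)$.

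To finish, let $\fp$ be any minimal prime of $R$ and localize at $\fp$. Because the other minimal primes of $R$ are incomparable with $\fp$, each of them extends to the unit ideal in $R_{\fp}$, so $\fn R_{\fp} = \fp R_{\fp}$; and since $\fp$ is minimal, $R_{\fp}$ is a zero-dimensional local ring whose nilradical coincides with its maximal ideal $\fp R_{\fp}$. The relation $c\fn = 0$ then gives $(c/1) \cdot \fp R_{\fp} = 0$ in $R_{\fp}$, while $c/1$ is a unit there because $c \in R^{\circ}$. This forces $\fp R_{\fp} = 0$, so $R_{\fp}$ is a field, which is precisely $(R_0)$.

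There is no serious obstacle; the key conceptual point is simply that the test element axiom, when taken at $n = 0$, degenerates into the pure annihilator statement $c\fb^{*} \subseteq \fb$, and that specialising to $\fb = 0$ together with the fact that every nilpotent of $R$ lies in $0^{*}$ is already enough to squeeze out $(R_0)$ after localising at a minimal prime.
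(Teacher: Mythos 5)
Your argument is correct. The first half — that $\fn \subseteq 0^*$ and therefore the test element axiom at $\fb=0$ forces $c\fn = 0$ — is identical in substance to what the paper does. Where you diverge is the finishing step: the paper's proof works with a minimal primary decomposition $0 = \bigcap_{i=1}^{h+t}\fq_i$, uses $c \notin \fp_i$ to push each $r \in \fn$ into $\fq_i$ for the minimal indices, and then invokes the uniqueness theorem for isolated primary components to get $\fq_i = \fp_i$; you instead localize at a minimal prime $\fp$, note that $\fn R_{\fp} = \fp R_{\fp}$ (since nilradical commutes with localization, or equivalently the other minimal primes blow up), and that $c/1$ is a unit there, giving $\fp R_{\fp} = 0$ directly. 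Both finishes hinge on exactly the same combination of facts ($c$ kills the nilradical while avoiding every minimal prime); yours uses the "$R_{\fp}$ is a field" characterization of $(R_0)$, while the paper's uses the "isolated primary components are prime" characterization — the two characterizations the paper itself declares equivalent. Your route is slightly more economical in that it avoids the uniqueness theorem for primary decomposition, trading it for the standard behaviour of localization; either is a legitimate and complete proof.
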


\begin{proof} Let $0 = \bigcap_{i=1}^{h+t} \fq_i$ be a minimal
primary decomposition for the zero ideal of $R$, with $\sqrt{\fq_i}
= \fp_i$ for all $i = 1, \ldots, h+t$, where the numbering is such
that $\fp_1, \ldots, \fp_h$ are the minimal prime ideals of $0$ and
$\fp_{h+1}, \ldots, \fp_{h+t}$ are the embedded prime ideals of $0$.
(Of course, $t$ could be zero.) Now the nilradical $\fn$ of $R$ is
contained in the tight closure of every ideal of $R$, including
$0^*$. Let $r \in \fn$. Since $c$ is a test element for ideals for
$R$, we have $cr^{p^n} \in 0^{[p^n]} = 0$ for all $n \in \nn$. In
particular, $cr \in 0 = \bigcap_{i=1}^{h+t} \fq_i$. But $c \in
R^{\circ} = R \setminus \bigcup_{i=1}^h\fp_i$. Therefore $r \in
\bigcap_{i=1}^{h} \fq_i$. It thus follows that $ \fn =
\bigcap_{i=1}^h \fp_i \subseteq \bigcap_{i=1}^{h} \fq_i. $ The
reverse inclusion is obvious. We then conclude from the uniqueness
theorems for primary decomposition that $\fq_i = \fp_i$ for all $i =
1, \ldots, h$.
\end{proof}

As mentioned above, one of the main results, Theorem \ref{jan.17gf},
of this paper will show that, if $R$ is an excellent local ring
which satisfies the condition $(R_0)$, then $R$ has a test element
for ideals.

Establishment of the existence of test elements for $R$ in the case
where it is not local (but is excellent and satisfies condition
$(R_0)$) appears to present additional hurdles. The main result of
this paper in the not-necessarily-local case concerns the situation
where $R$ is a homomorphic image of an excellent regular ring $S$ of
characteristic $p$ for which the Frobenius homomorphism $f : S \lra
S$ is `intersection flat' (see \cite[p.\ 41]{HocHun94} or
Definitions \ref{if.1} for the definition of this concept). Examples
of regular rings of characteristic $p$ for which the Frobenius
homomorphism is intersection flat include $\K[X_1,\ldots,X_n]$ and
$\K[[X_1,\ldots,X_n]]$, where $\K$ is a field of characteristic $p$
and $X_1,\ldots,X_n$ are indeterminates, and $
\K[[X_1,\ldots,X_n]][Y_1,\ldots,Y_m]$, where $X_1,\ldots,X_n,Y_1,
\ldots,Y_m$ are independent indeterminates and the field $\K$ of
characteristic $p$ is such that $\K^{1/p}$ is a finite extension of
$\K$.

The following result is presented in Theorem \ref{nlrn.2} below.

\vspace{0.1in}

\noindent{\bf Theorem.} {\it Suppose that $R$ is a homomorphic image
$S/\fa$ of an excellent regular ring $S$ of characteristic $p$
modulo a proper ideal $\fa$, and assume that $f : S \lra S$ is
intersection-flat and that $R$ satisfies condition $(R_0)$. Then for
each $c \in R^{\circ}$ for which $R_c$ is Gorenstein and weakly
$F$-regular, some power of $c$ is a big test element for $R$.}

\vspace{0.1in}

This result is not covered by the Hochster--Huneke Theorem
\ref{in.1} because it applies to rings (of a certain type) that
satisfy condition $(R_0)$ and it asserts the existence of big test
elements (although, as mentioned earlier, Hochster and Huneke are
aware that their methods yield big test elements). The following
corollary (presented below as Corollary \ref{jul.5}) extends the
applicability of the circle of ideas underlying Theorem
\ref{nlrn.2}.

\vspace{0.1in}

\noindent{\bf Corollary.} {\it Suppose that $R \subseteq R'$ is a
faithfully flat extension of excellent rings of characteristic $p$
such that all the fibre rings of the inclusion ring homomorphism are
regular, and such that $R'$ is a homomorphic image of an excellent
regular ring $S$ of characteristic $p$ for which the Frobenius
homomorphism $f : S \lra S$ is intersection-flat.

Suppose that $R$ satisfies condition $(R_0)$. Then for each $c \in
R^{\circ}$ for which $R_c$ is Gorenstein and weakly $F$-regular,
some power of $c$ is a big test element for $R$.}

\vspace{0.1in}

The following strengthening of Theorem \ref{in.11} is another new
result in the not-necessarily-local case that is presented in this
paper (in Corollary \ref{may.2}).

\vspace{0.1in}

\noindent{\bf Theorem.} {\it Suppose that $R$ is excellent and
$F$-pure (but not necessary local), and that $c \in R^{\circ}$ is
such that $R_c$ is Gorenstein and weakly $F$-regular. Then $c$
itself is a big test element for $R$.}

\vspace{0.1in}

The methods used in this paper are based on the theory of left
$R[x,f]$-modules and present an approach to questions about
existence of test elements that is very different from that of
Hochster and Huneke in \cite{HocHun94}. Indeed, one of the main aims
of this paper is to provide arguments that establish the existence
of big test elements without use of the Gamma construction.

I would like to thank the referee for several very helpful comments.

\section{\it Annihilators of internal tight closures of zero}
\label{bt}

The notation and terminology introduced in \S \ref{intro} will be
maintained throughout the paper.

\begin{ntn}
\label{jan.21} Throughout the paper, $H$ will denote a left
$R[x,f]$-module; its {\em $x$-torsion submodule\/} is its
$R[x,f]$-submodule
$$
\Gamma_x(H) := \left\{ h \in H : \text{there exists $n \in \nn$ such
that $x^nh = 0$}\right\}.
$$
We shall denote the left $R[x,f]$-module $H/\Gamma_x(H)$ by $G$.
Note that $G$ is {\em $x$-torsion-free\/}, that is, $\Gamma_x(G) =
0$.

The graded two-sided ideals of $R[x,f]$ are precisely the subsets of
the form $${\textstyle \bigoplus_{n\in\nn}\fa_nx^n},$$ where
$(\fa_n)_{n\in\nn}$ is an ascending sequence of ideals of $R$. (Of
course, such a sequence $\fa_0 \subseteq \fa_1 \subseteq \cdots
\subseteq \fa_n \subseteq \cdots $ is eventually stationary.) An
$R[x,f]$-submodule of $H$ is said to be a {\em special annihilator
submodule of $H$\/} if it has the form $$\ann_H(\fA) = \{ h \in H :
\theta h = 0 \text{~for all~} \theta \in \fA\}$$ for some {\em
graded\/} two-sided ideal $\fA$ of $R[x,f]$.

We shall use $\mathcal{A}(H)$ to denote the set of special
annihilator submodules of $H$.

The {\em graded annihilator $\grann_{R[x,f]}H$ of $H$\/} is defined
to be the largest graded two-sided ideal of $R[x,f]$ that
annihilates $H$. Note that, if $\grann_{R[x,f]}H =
\bigoplus_{n\in\nn} \fa_nx^n$, then $\fa_0 = (0:_RH)$, which we
shall sometimes refer to as the\/ {\em $R$-annihilator of $H$.}

We shall use $\mathcal{I}(H)$ to denote the set of $R$-annihilators
of $R[x,f]$-submodules of $H$.
\end{ntn}

We now recall some satisfactory properties of the set
$\mathcal{I}(H)$ in the case where $H$ is $x$-torsion-free, that is,
when $H = G$.

\begin{rmds}
\label{jan.23} Consider the $x$-torsion-free left $R[x,f]$-module
$G$.

\begin{enumerate} \item By \cite[Lemma 1.9]{ga}, the members of $\mathcal{I}(G)$ are all
radical ideals of $R$; they are referred to as the {\em $G$-special
$R$-ideals\/}; in fact, the graded annihilator of an
$R[x,f]$-submodule $L$ of $G$ is equal to $\bigoplus_{n\in\nn}\fa
x^n$, where $\fa = (0:_RL)$.

\item By \cite[Proposition 1.11]{ga}, there is an order-reversing bijection, $\Theta : \mathcal{A}(G)
\lra \mathcal{I}(G)$ given by
$$
\Theta : N \longmapsto \left(\grann_{R[x,f]}N\right)\cap R =
(0:_RN).
$$
The inverse bijection, $\Theta^{-1} : \mathcal{I}(G) \lra
\mathcal{A}(G),$ also order-reversing, is given by
$$
\Theta^{-1} : \fb \longmapsto \ann_G(\fb R[x,f]).
$$

\item By \cite[Corollary 3.7]{ga}, the set of $G$-special
$R$-ideals is precisely the set of all finite intersections of prime
$G$-special $R$-ideals (provided one includes the empty
intersection, $R$, which corresponds under the bijection of part
(ii) to the zero special annihilator submodule of $G$). In symbols,
$$
\mathcal{I}(G) = \left\{ \fp_1 \cap \ldots \cap \fp_t : t \in \nn
\mbox{~and~} \fp_1, \ldots, \fp_t \in
\mathcal{I}(G)\cap\Spec(R)\right\}.
$$

\item By \cite[Corollary 3.11]{ga}, when $G$ is Artinian as an
$R$-module, the sets $\mathcal{I}(G)$ and $\mathcal{A}(G)$ are both
finite.
\end{enumerate}
\end{rmds}

\begin{ntn}
\label{jan.24} We define $\mathcal{M}(G)$, for the $x$-torsion-free
left $R[x,f]$-module $G$, to be the set of minimal members (with
respect to inclusion) of the set $$ \left\{ \fp \in \Spec(R) \cap
\mathcal{I}(G) : \height \fp
> 0\right\} $$ of prime $G$-special $R$ ideals of positive height.

When $\mathcal{M}(G)$ is finite, we shall set $\fb := \bigcap_{\fp
\in \mathcal{M}(G)}\fp$; in that case, it follows from
\ref{jan.23}(iii) that $\fb$ is the smallest member of
$\mathcal{I}(G)$ of positive height (and, in particular, $\fb$ is
contained in every other member of $\mathcal{I}(G)$ of positive
height). (In the special case where $\mathcal{M}(G) = \emptyset$, we
interpret $\fb$ as $R$, the intersection of the empty family of
prime ideals of $R$. We adopt the convention that the improper ideal
$R$ has infinite (and positive) height.)
\end{ntn}

In this section we shall explore the concept of the internal tight
closure of zero in the left $R[x,f]$-module $H$, defined as follows.

\begin{defi}
\label{jan.3} We define the {\em internal tight closure of zero in
$H$,\/} denoted $\Delta(H)$ (or $\Delta_R(H)$ when it is desirable
to emphasize the underlying ring $R$), to be the $R[x,f]$-submodule
of $H$ given by
$$
\Delta(H)= \left\{ h \in H : \text{there exists~} c \in R^{\circ}
\text{~with~} cx^nh = 0 \text{~for all~} n \gg 0 \right\}.
$$
Note that if the left $R[x,f]$-module $H$ is $\Z$-graded, then
$\Delta(H)$ is a graded submodule.
\end{defi}

Connections between internal tight closures of zero and tight
closures are given in the following two examples.

\begin{ex}
\label{jan.5} Let $M$ be an $R$-module and let $h,n \in \nn$. Endow
$Rx^n$ and $Rx^h$ with their natural structures as $(R,R)$-bimodules
(inherited from their being graded components of $R[x,f]$). Then
there is an isomorphism of (left) $R$-modules $\phi :
Rx^{n+h}\otimes_R M \stackrel{\cong}{\lra}
Rx^{n}\otimes_R(Rx^{h}\otimes_R M)$ for which $\phi(rx^{n+h} \otimes
m) = rx^n \otimes (x^h \otimes m)$ for all $r \in R$ and $m \in M$.

One can use isomorphisms like that described in the above paragraph
to see that
$$\Delta(R[x,f]\otimes_RM) = 0^*_M \oplus 0^*_{Rx
\otimes_RM} \oplus \cdots \oplus 0^*_{Rx^n \otimes_RM}\oplus \cdots.
$$
\end{ex}

\begin{ex}
\label{jan.5jalg} Suppose that $(R,\fm)$ is local of dimension $d >
0$, and endow the local cohomology module $H' := H^d_{\fm}(R)$ with
its natural structure as a left $R[x,f]$-module (see \cite[Reminder
4.1]{ga}, for example). Then $\Delta(H')= 0^*_{H'}$, by \cite[Remark
4.2(iv)]{ga}.
\end{ex}

\begin{rmk}
\label{jan.2} Motivated by tight closure theory, we shall be
interested in situations where the ideal $(0:_R\Delta(H))$ has
positive height. In this context, it should be noted that an ideal
$\fa$ of $R$ has positive height if and only if it meets
$R^{\circ}$, and then it can be generated by (finitely many)
elements of $\fa\cap R^{\circ}$.
\end{rmk}

\begin{rmk}\label{jan.21a}
If $\fA = \bigoplus_{n\in\nn}\fa_nx^n$ is a graded two-sided ideal
of $R[x,f]$, so that $$\fa_0 \subseteq \fa_1 \subseteq \cdots
\subseteq \fa_n \subseteq \cdots$$ is an ascending chain of ideals
of $R$, and $\height \fa_t
> 0$ for some $t \in \nn$, then $\ann_H(\fA) \subseteq \Delta(H)$
because there exists $c \in \fa_t \cap R^{\circ}$, so that each
element of $\ann_H(\fA)$ is annihilated by $cx^j$ for all $j \geq
t$.
\end{rmk}

\begin{rmk}\label{jan.4}
Note that
\begin{align*}
\Gamma_x(H) & = \left\{h\in H : \text{there exists~} n \in \nn
\text{~with~} x^nh = 0 \right\}
\\ &= \left\{ h \in H : 1x^nh = 0 \text{~for all~} n \gg 0 \right\} \subseteq
\Delta(H).
\end{align*}
Therefore $(0:_R\Delta(H)) \subseteq (0:_R\Gamma_x(H))$.
\end{rmk}

\begin{lem}
\label{jan.25} Let $\grann_{R[x,f]}(\Delta(H)) :=
\bigoplus_{n\in\nn}\fd_nx^n$, so that $\fd_0 = (0:_R\Delta(H))$.
Suppose that $\height \fd_{n_0} > 0$ for some $n_0\in \nn$. Then
$\Delta(H) = {\textstyle \ann_H(\bigoplus_{n\geq n_0}\fd_{n_0}x^n)}=
{\textstyle \ann_H(\bigoplus_{n\geq n_0}\fd_nx^n)}$. In particular,
if\/ $\height \fd_0 > 0$, then
$$
\Delta(H) = \ann_H(\fd_0 R[x,f]) = {\textstyle
\ann_H(\bigoplus_{n\in\nn}\fd_nx^n)}.
$$
\end{lem}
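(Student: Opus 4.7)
The plan is to establish the equalities by squeezing $\Delta(H)$ between a pair of annihilators and then closing the loop with Remark \ref{jan.21a}. Write $\fA := \grann_{R[x,f]}(\Delta(H)) = \bigoplus_{n\in\nn}\fd_nx^n$, so that $\fd_0 \subseteq \fd_1 \subseteq \cdots$ is an ascending chain and $\fA$ annihilates $\Delta(H)$ by the very definition of the graded annihilator. Introduce the two auxiliary graded subideals
$$\fB := \bigoplus_{n \geq n_0}\fd_n x^n \qquad \text{and} \qquad \fC := \bigoplus_{n \geq n_0}\fd_{n_0}x^n,$$
both of which are graded two-sided ideals of $R[x,f]$ (each $\fd_n$ is closed under $p$th powers, so the ascending-chain condition gives the required compatibility $\fd_n^p \subseteq \fd_{n+1}$ for the Frobenius twist). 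Since $\fd_{n_0} \subseteq \fd_n$ for $n \geq n_0$, we have $\fC \subseteq \fB \subseteq \fA$.

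The easy direction is immediate: because $\fA$ annihilates $\Delta(H)$, so do the smaller ideals $\fB$ and $\fC$, giving $\Delta(H) \subseteq \ann_H(\fB)$ and $\Delta(H) \subseteq \ann_H(\fC)$. Moreover, since $\fC \subseteq \fB$, passing to annihilators reverses the inclusion to give $\ann_H(\fB) \subseteq \ann_H(\fC)$. Combined, these supply the chain
$$\Delta(H) \;\subseteq\; \ann_H(\fB) \;\subseteq\; \ann_H(\fC).$$

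The only substantive step is the reverse inclusion $\ann_H(\fC) \subseteq \Delta(H)$, and this is precisely where the hypothesis $\height \fd_{n_0} > 0$ is used. Remark \ref{jan.21a} applies verbatim to the graded two-sided ideal $\fC$, whose degree-$n_0$ component $\fd_{n_0}$ has positive height: choosing $c \in \fd_{n_0} \cap R^{\circ}$ (possible by Remark \ref{jan.2}), any $h \in \ann_H(\fC)$ satisfies $cx^j h = 0$ for all $j \geq n_0$, so $h \in \Delta(H)$ by the definition of the internal tight closure of zero. This closes the chain into a cycle and forces equality of all three sets, which is the content of the first assertion.

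For the special case $\height \fd_0 > 0$, one simply specializes the general statement to $n_0 = 0$: then $\fB = \bigoplus_{n \in \nn} \fd_n x^n = \fA$ and $\fC = \bigoplus_{n \in \nn} \fd_0 x^n = \fd_0 R[x,f]$, so the two displayed equalities for $\Delta(H)$ follow at once. I do not foresee any real obstacle; the lemma is essentially a clean packaging of the order-reversing bijection between (degrees of) graded ideals and their annihilator submodules, together with Remark \ref{jan.21a} to supply the one non-formal inclusion.
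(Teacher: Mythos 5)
Your argument is correct and follows essentially the same route as the paper's: establish the chain $\Delta(H) \subseteq \ann_H(\bigoplus_{n\geq n_0}\fd_n x^n) \subseteq \ann_H(\bigoplus_{n\geq n_0}\fd_{n_0}x^n)$ from the inclusions of graded ideals, then close the loop via Remark \ref{jan.21a} using $\height\fd_{n_0} > 0$. One small quibble: your parenthetical justification that $\fB$ and $\fC$ are two-sided ideals invokes ``$\fd_n^p \subseteq \fd_{n+1}$,'' but the correct (and in fact weaker) requirement, as recorded in \ref{jan.21}, is simply that the component ideals form an ascending chain ($\fd_n \subseteq \fd_{n+1}$, which automatically gives $\fd_n^{[p]} \subseteq \fd_{n+1}$ in characteristic $p$); this does not affect the argument since both $\fB$ and $\fC$ obviously satisfy that condition.
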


\begin{note} It should be noted that, in the case where $\height \fd_0 > 0$,
it is {\em not\/} being claimed
in this lemma that $\fd_n = \fd_0$ for all $n \in \nn$: recall that
$H$ need not be $x$-torsion-free.
\end{note}

\begin{proof} Since $\fd_n \subseteq \fd_{n+1}$ for all $n \in \nn$, it is clear that
$$
\Delta(H) \subseteq {\textstyle \ann_H(\bigoplus_{n\geq
n_0}\fd_{n}x^n)} \subseteq {\textstyle \ann_H(\bigoplus_{n\geq
n_0}\fd_{n_0}x^n)}).
$$
However, since $\height \fd_{n_0} > 0$, it follows from Remark
\ref{jan.21a} that $\ann_H(\bigoplus_{n\geq n_0}\fd_{n_0}x^n)
\subseteq \Delta(H)$.
\end{proof}

\begin{lem}
\label{jan.6} Recall that $G := H/\Gamma_x(H)$ and, from Remark\/
{\rm \ref{jan.4}}, that $\Gamma_x(H) \subseteq \Delta(H)$. Assume
that $\height (0:_R\Gamma_x(H)) > 0$. Then $\Delta(H)/\Gamma_x(H) =
\Delta(G)$.
\end{lem}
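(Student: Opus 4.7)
The plan is to prove both containments separately; only one is nontrivial, and the hypothesis on $\height(0:_R \Gamma_x(H))$ is used only in the harder direction.

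First I would dispatch the easy inclusion $\Delta(H)/\Gamma_x(H) \subseteq \Delta(G)$. If $h \in \Delta(H)$, pick $c \in R^\circ$ with $cx^n h = 0$ for all $n \gg 0$; passing to the quotient $G = H/\Gamma_x(H)$, the same $c$ witnesses $h + \Gamma_x(H) \in \Delta(G)$. No use of the height hypothesis is needed here.

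For the reverse inclusion $\Delta(G) \subseteq \Delta(H)/\Gamma_x(H)$, I would take any $h \in H$ with $h + \Gamma_x(H) \in \Delta(G)$ and produce some $c'' \in R^\circ$ with $c''x^n h = 0$ for all $n \gg 0$. By definition of $\Delta(G)$, there is $c \in R^\circ$ and $N \in \nn$ with $cx^n h \in \Gamma_x(H)$ for all $n \geq N$. Here is where the hypothesis enters: since $\height(0:_R\Gamma_x(H)) > 0$, Remark~\ref{jan.2} guarantees an element $c' \in (0:_R\Gamma_x(H)) \cap R^\circ$, so $c'$ annihilates $\Gamma_x(H)$. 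Applying $c'$ to $cx^n h \in \Gamma_x(H)$ yields
\[
0 \;=\; c'(cx^n h) \;=\; (c'c)x^n h \quad \text{for all } n \geq N,
\]
where I have used that $c'$ lies in the left $R$-factor in front of $cx^n$. Since each minimal prime of $R$ is prime and contains neither $c$ nor $c'$, the product $c'c$ lies in $R^\circ$, so $h \in \Delta(H)$ and hence $h + \Gamma_x(H) \in \Delta(H)/\Gamma_x(H)$.

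The only subtlety — if one can call it an obstacle — is the observation that the positive height of $(0:_R\Gamma_x(H))$ produces a single element of $R^\circ$ that kills all of $\Gamma_x(H)$ uniformly. Without this, the levels of $x$-torsion of the elements $cx^n h$ could be unbounded in $n$ and one could not collapse the condition $cx^n h \in \Gamma_x(H)$ (for each $n$, an unspecified further power of $x$ kills it) into the uniform condition $(c'c)x^n h = 0$ required by the definition of $\Delta(H)$. Once this single element $c'$ is in hand, the argument is immediate.
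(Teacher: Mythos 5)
Your proof is correct and follows essentially the same route as the paper: the easy inclusion needs no hypothesis, and for the reverse inclusion you use the positive height of $(0:_R\Gamma_x(H))$ to extract a single element of $R^\circ$ annihilating all of $\Gamma_x(H)$, then multiply through to convert $cx^nh \in \Gamma_x(H)$ into $(c'c)x^nh = 0$. Your closing remark correctly identifies the key point — that without such a uniform annihilator the $x$-torsion degrees of the elements $cx^nh$ could grow with $n$ — which is exactly why the height hypothesis is needed.
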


\begin{proof}
It is clear that, in any event, $\Delta(H)/\Gamma_x(H)
\subseteq \Delta(H/\Gamma_x(H))$. To prove the reverse inclusion,
let $h \in H$ be such that $h + \Gamma_x(H) \in
\Delta(H/\Gamma_x(H))$, so that there exist $c \in R^{\circ}$ and
$n_0 \in \nn$ such that $cx^n(h + \Gamma_x(H)) = 0$ for all $n \geq
n_0$. Thus  $cx^nh \in \Gamma_x(H)$ for all $n \geq n_0$. There
exists $r \in R^{\circ}\cap(0:_R\Gamma_x(H))$; therefore $rcx^nh =
0$ for all $n \geq n_0$; since $rc \in R^{\circ}$, we have $h \in
\Delta(H)$. It follows that $\Delta(H)/\Gamma_x(H) =
\Delta(H/\Gamma_x(H))$.
\end{proof}

\begin{lem}
\label{jan.7} Consider the $x$-torsion-free left $R[x,f]$-module
$G$. If $\mathcal{M}(G)$ (see\/ {\rm \ref{jan.24}}) is finite, then
$\Delta(G) = \ann_G(\fb R[f,x])$, where $\fb:= \bigcap_{\fp \in
\mathcal{M}(G)}\fp$.
\end{lem}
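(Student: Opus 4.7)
The plan is to establish $\Delta(G) = \ann_G(\fb R[x,f])$ by verifying the two inclusions separately; the inclusion $\ann_G(\fb R[x,f]) \subseteq \Delta(G)$ will be a quick consequence of Remark \ref{jan.21a}, while the reverse inclusion is the main content.

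\emph{The easy inclusion.} Since $\fb = \bigcap_{\fp \in \mathcal{M}(G)} \fp$ is a finite intersection of prime ideals each of positive height, the minimal primes of $\fb$ lie among the $\fp \in \mathcal{M}(G)$, and so $\fb$ itself has positive height. The graded two-sided ideal $\fb R[x,f] = \bigoplus_{n \in \nn} \fb x^n$ therefore satisfies the hypothesis of Remark \ref{jan.21a} with $t = 0$, and the inclusion $\ann_G(\fb R[x,f]) \subseteq \Delta(G)$ follows at once.

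\emph{The main inclusion.} Fix $h \in \Delta(G)$ and choose $c \in R^{\circ}$ and $n_0 \in \nn$ with $cx^n h = 0$ for all $n \geq n_0$. The first step is to show that the single element $cx^{n_0}$ annihilates the whole cyclic submodule $R[x,f] h$. Using the skew relation $x^{n_0} r = r^{p^{n_0}} x^{n_0}$, one computes, for $r \in R$ and $i \in \nn$,
$$
(cx^{n_0})(rx^i h) = c\, r^{p^{n_0}} x^{n_0+i} h = r^{p^{n_0}}(c x^{n_0+i} h) = 0,
$$
so $cx^{n_0} \in \grann_{R[x,f]}(R[x,f] h)$. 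Since $R[x,f] h$ is an $R[x,f]$-submodule of the $x$-torsion-free module $G$, Reminder \ref{jan.23}(i) gives $\grann_{R[x,f]}(R[x,f] h) = \fe R[x,f]$, where $\fe := (0 :_R R[x,f] h) \in \mathcal{I}(G)$; hence $c \in \fe$, and so $\fe$ has positive height. By Reminder \ref{jan.23}(iii), $\fe$ is an intersection of finitely many prime $G$-special $R$-ideals, and its minimal primes are found among them. Each such minimal prime is a prime $G$-special $R$-ideal of positive height and therefore contains, by the very minimality defining $\mathcal{M}(G)$, some member of $\mathcal{M}(G)$. Consequently $\fe \supseteq \fb$, whence $\fb \cdot R[x,f] h = 0$; in particular $\fb x^n h = 0$ for every $n \in \nn$, i.e.\ $h \in \ann_G(\fb R[x,f])$.

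\emph{Main obstacle.} The only slightly delicate point is passing from the information $cx^n h = 0$ (which concerns only $h$) to the assertion that a \emph{single} element $cx^{n_0}$ annihilates the entire cyclic $R[x,f]$-submodule generated by $h$; but the skew commutation rule $xr = r^p x$ makes this immediate, so no genuine obstacle arises, and the rest of the argument is a clean application of Reminders \ref{jan.23}(i) and (iii).
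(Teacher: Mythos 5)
Your proof is correct, and the core machinery is the same as the paper's: both hinge on Reminder \ref{jan.23}(i) (that the graded annihilator of an $R[x,f]$-submodule of the $x$-torsion-free $G$ has the uniform form $\fa R[x,f]$ with $\fa = (0:_R L)$) and on the fact that every member of $\mathcal{I}(G)$ of positive height contains $\fb$. The route to these facts differs in two small but tidy ways. Where the paper takes $J := \ann_G(\bigoplus_{n\geq n_0}Rc'x^n)$ — a special annihilator submodule containing $g$, to which the bijection $\Theta$ of \ref{jan.23}(ii) is applied — you instead work with the cyclic submodule $R[x,f]h$ and make the pleasant observation (using only the skew relation) that $cx^{n_0}$ kills all of $R[x,f]h$, so that $c \in \fe := (0:_R R[x,f]h)$; this bypasses any appeal to \ref{jan.23}(ii). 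And where the paper invokes the observation in Notation \ref{jan.24} that $\fb$ is the \emph{smallest} member of $\mathcal{I}(G)$ of positive height, you rederive the containment $\fe \supseteq \fb$ by decomposing $\fe$ into prime $G$-special ideals via \ref{jan.23}(iii) and using the DCC in $\Spec(R)$; this is slightly longer but equivalent. Both versions buy the same thing, and yours is perhaps marginally more self-contained in that it avoids the order-reversing bijection; the paper's is marginally shorter by citing the minimality statement directly.
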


\begin{proof} Since $\mathcal{M}(G)$ is finite, $\fb$ is the smallest
member of ${\mathcal I}(G)$ of positive height. By Remark
\ref{jan.21a}, we have $\ann_G(\fb R[x,f]) \subseteq \Delta(G)$. Let
$g \in \Delta(G)$, so that there exist $c' \in R^{\circ}$ and $n_0
\in\nn$ such that $c'x^ng = 0$ for all $n \geq n_0$. Thus $g \in
\ann_G(\bigoplus_{n\geq n_0}Rc'x^n) =: J$, a special annihilator
submodule of $G$. Let $\fb'$ be the $G$-special $R$-ideal that
corresponds to this special annihilator submodule (in the bijective
correspondence of \ref{jan.23}(ii)). Since $\bigoplus_{n\geq
n_0}Rc'x^n \subseteq \grann_{R[x,f]}J = \fb' R[x,f]$, we have $c'
\in \fb'$, so that $\height \fb' > 0$. Therefore $\fb' \supseteq
\fb$, and $g \in J = \ann_G(\fb' R[x,f]) \subseteq \ann_G(\fb
R[x,f]). $ Therefore $\Delta(G) \subseteq \ann_G(\fb R[x,f])$. We
conclude that $\Delta(G) = \ann_G(\fb R[x,f])$.
\end{proof}

\begin{thm}
\label{jan.8} Use the notation of\/ {\rm \ref{jan.21}}: recall that
$G = H/\Gamma_x(H)$. The following statements are equivalent:
\begin{enumerate}
\item $\height
(0:_R\Delta(H)) > 0$;
\item $\height (0:_R\Gamma_x(H)) > 0$ and $\height (0:_R\Delta(G)) >
0$;
\item $\height (0:_R\Gamma_x(H)) > 0$ and $\mathcal{M}(G)$ is finite.
\end{enumerate}
When these conditions are satisfied, $(0:_R\Delta(G)) = \fb :=
{\textstyle \bigcap_{\fp \in \mathcal{M}(G)}\fp}$ and
$$(0:_R\Delta(G))(0:_R\Gamma_x(H)) \subseteq (0:_R\Delta(H))
\subseteq (0:_R\Delta(G))\cap (0:_R\Gamma_x(H)).$$ Consequently, if
$n\in \nn$ is such that $\left(\sqrt{(0:_R\Gamma_x(H))}\right)^n
\subseteq (0:_R\Gamma_x(H))$, then
$$\left(\sqrt{(0:_R\Delta(H))}\right)^{n+1} \subseteq
(0:_R\Delta(H)).$$
\end{thm}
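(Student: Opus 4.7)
The plan is to prove the cycle (i) $\Rightarrow$ (ii) $\Rightarrow$ (iii) $\Rightarrow$ (i), and then to read off the identity $(0:_R\Delta(G)) = \fb$, the sandwich of inclusions, and finally the statement about radicals from the intermediate steps.

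The implication (i) $\Rightarrow$ (ii) is essentially bookkeeping: Remark \ref{jan.4} gives $\Gamma_x(H) \subseteq \Delta(H)$, hence $(0:_R\Delta(H)) \subseteq (0:_R\Gamma_x(H))$, so the latter has positive height. Lemma \ref{jan.6} then applies to yield $\Delta(G) = \Delta(H)/\Gamma_x(H)$, a quotient of $\Delta(H)$, so $(0:_R\Delta(H)) \subseteq (0:_R\Delta(G))$ and hence $\height(0:_R\Delta(G)) > 0$ as well.

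The main obstacle is (ii) $\Rightarrow$ (iii): showing $\mathcal{M}(G)$ is finite requires tying together several of the structural facts from \ref{jan.23}. Set $\fd_0 := (0:_R\Delta(G))$, which by \ref{jan.23}(i) is a $G$-special $R$-ideal. By \ref{jan.23}(iii), write $\fd_0 = \fp_1 \cap \cdots \cap \fp_t$ with each $\fp_i \in \mathcal{I}(G) \cap \Spec(R)$; since $\fp_i \supseteq \fd_0$ and $\height \fd_0 > 0$, each $\fp_i$ has positive height. I will show every $\fq \in \mathcal{M}(G)$ coincides with one of the $\fp_i$: for such a $\fq$, the submodule $\ann_G(\fq R[x,f])$ has $R$-annihilator $\fq$ of positive height, so Remark \ref{jan.21a} places it inside $\Delta(G)$, and the order-reversing bijection \ref{jan.23}(ii) then yields $\fd_0 \subseteq \fq$; since $\fq$ is prime and $\fp_1\cdots\fp_t \subseteq \fd_0 \subseteq \fq$, some $\fp_i \subseteq \fq$, and minimality of $\fq$ forces $\fq = \fp_i$. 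Hence $\mathcal{M}(G) \subseteq \{\fp_1,\ldots,\fp_t\}$ is finite.

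For (iii) $\Rightarrow$ (i), Lemma \ref{jan.7} gives $\Delta(G) = \ann_G(\fb R[x,f])$, and since $\fb$ is a finite intersection of primes in $\mathcal{I}(G) \cap \Spec(R)$ (thus in $\mathcal{I}(G)$ by \ref{jan.23}(iii)), the bijection \ref{jan.23}(ii) identifies $(0:_R\Delta(G))$ with $\fb$; being a finite intersection of primes of positive height, $\fb$ itself has positive height. Applying Lemma \ref{jan.6} to the short exact sequence $0 \to \Gamma_x(H) \to \Delta(H) \to \Delta(G) \to 0$ yields $(0:_R\Gamma_x(H))\cdot (0:_R\Delta(G)) \subseteq (0:_R\Delta(H))$, a product of two positive-height ideals and hence itself of positive height. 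The opposite sandwich inclusion $(0:_R\Delta(H)) \subseteq (0:_R\Gamma_x(H)) \cap (0:_R\Delta(G))$ is immediate from $\Gamma_x(H) \subseteq \Delta(H)$ together with $\Delta(G) = \Delta(H)/\Gamma_x(H)$. For the final consequence, write $\fa := (0:_R\Delta(H))$ and $\fc := (0:_R\Gamma_x(H))$; because $\fb$ is an intersection of primes it is radical, so $\sqrt{\fa} \subseteq \sqrt{\fc} \cap \fb$, and then $(\sqrt{\fa})^{n+1} = (\sqrt{\fa})^{n} \cdot \sqrt{\fa} \subseteq (\sqrt{\fc})^{n} \cdot \fb \subseteq \fc\fb \subseteq \fa$.
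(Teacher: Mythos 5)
Your proof is correct and follows essentially the same route as the paper: the same cycle (i)\,$\Rightarrow$\,(ii)\,$\Rightarrow$\,(iii)\,$\Rightarrow$\,(i), the same key lemmas (\ref{jan.4}, \ref{jan.6}, \ref{jan.7}, and the bijective correspondence of \ref{jan.23}(ii)), and the same final radical computation. The one place you diverge is (ii)\,$\Rightarrow$\,(iii): the paper first invokes Lemma \ref{jan.25} to realise $\Delta(G)$ as a special annihilator submodule, splits off the degenerate case $(0:_R\Delta(G)) = R$, and then uses a descending-chain argument to show that the primes appearing in the decomposition of $(0:_R\Delta(G))$ are \emph{precisely} the members of $\mathcal{M}(G)$, whereas you go straight to $\mathcal{M}(G) \subseteq \{\fp_1,\ldots,\fp_t\}$ (via $\fp_1\cdots\fp_t \subseteq \fq$ and minimality), which is enough for finiteness and sidesteps the appeal to \ref{jan.25}. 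This is a modest economy rather than a different approach; the paper's extra work also serves to confirm the identity $\mathcal{M}(G) = \{\fp_1,\ldots,\fp_t\}$, which you instead recover later in (iii)\,$\Rightarrow$\,(i) from Lemma \ref{jan.7}.
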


\begin{proof} (i) $\Rightarrow$ (ii) Assume that $\height (0:_R\Delta(H)) > 0$. Now
$(0:_R\Delta(H)) \subseteq (0:_R\Gamma_x(H))$ by Remark \ref{jan.4},
so that $\height (0:_R\Gamma_x(H)) > 0$. By Lemma \ref{jan.6}, we
have $$\Delta(H)/\Gamma_x(H) = \Delta(G).$$ Therefore $
(0:_R\Delta(H)) \subseteq (0:_R\Gamma_x(H))\cap (0:_R\Delta(G)). $
It follows that $\height (0:_R\Delta(G)) \geq \height
(0:_R\Delta(H))
> 0$.

(ii) $\Rightarrow$ (iii) Assume that $\height (0:_R\Gamma_x(H)) > 0$
and $\height (0:_R\Delta(G)) > 0$, and set $\fc := (0:_R\Delta(G))$.
Since $G$ is $x$-torsion-free and $\Delta(G)$ is an
$R[x,f]$-submodule of $G$, it follows from \ref{jan.23}(i) that
$\fc$ is radical and $\grann_{R[x,f]}\Delta(G) = \fc R[x,f]$. Since
$\height \fc > 0$, we have $\ann_{G}(\fc R[x,f]) = \Delta(G)$ by
Lemma \ref{jan.25}. Note that $\fc \in \mathcal{I}(G)$.

If $\fc = R$, then $\Delta(G) = 0$, so that $\mathcal{M}(G)$ is
empty because a $\fp \in \mathcal{I}(G)\cap\Spec(R)$ with $\height
\fp > 0$ must (by Remark \ref{jan.21a}) satisfy $\ann_G(\fp R[x,f])
\subseteq \Delta(G) = 0$, and this leads to a contradiction to
\ref{jan.23}(ii). We therefore suppose that $\fc \neq R$.

Let $\fc = \fp_1 \cap \cdots \cap \fp_t$ be the minimal primary
decomposition of the (radical) ideal $\fc$. By \cite[Theorem 3.6 and
Corollary 3.7]{ga}, the prime ideals $\fp_1, \ldots, \fp_t$ all
belong to $\mathcal{I}(G)$; they all have positive height. Since
$\Spec (R)$ satisfies the descending chain condition, each member of
$\{\fp' \in \Spec (R)\cap\mathcal{I}(G) : \height \fp'
> 0\}$ contains a member of ${\mathcal M}(G)$. In particular, each of $\fp_1, \ldots, \fp_t$ contains
a member of ${\mathcal M}(G)$.

Now let $\fp \in {\mathcal M}(G)$. We use Remark \ref{jan.21a} to
see that $\ann_{G}(\fp R[x,f]) \subseteq \Delta(G) = \ann_{G}(\fc
R[x,f])$. It now follows from the inclusion-reversing bijective
correspondence of \ref{jan.23}(ii) that $\fp \supseteq \fc$, so that
$\fp$ contains one of $\fp_1, \ldots, \fp_t$. We can therefore
conclude that $\fp_1, \ldots, \fp_t$ are precisely the minimal
members of $\{\fp \in \mathcal{I}(G)\cap\Spec (R) : \height \fp
> 0\}$. Therefore $\mathcal{M}(G)$ is finite.

(iii) $\Rightarrow$ (i) Assume that $\height (0:_R\Gamma_x(H)) > 0$
and that $\mathcal{M}(G)$ is finite. It follows from \ref{jan.24}
that $\fb := \bigcap_{\fp \in \mathcal{M}(G)}\fp$ is the smallest
member of $\{ \fh \in \mathcal{I}(G) : \height \fh
> 0 \}$ and Lemma \ref{jan.7} shows that $\Delta(G) =
\ann_{G}(\fb R[x,f])$. Therefore $(0:_R\Delta(G)) = \fb$ has
positive height. Lemma \ref{jan.6} shows that $\Delta(H)/\Gamma_x(H)
= \Delta(G)$, so that
$$
(0:_R\Gamma_x(H))(0:_R\Delta(G))\subseteq (0:_R\Delta(H)).
$$
Since $\height (0:_R\Gamma_x(H)) > 0$ and $\height (0:_R\Delta(G))>
0$, we see that $\height (0:_R\Delta(H))> 0$.

It follows from what we have already established in this proof that
if the conditions (i), (ii) and (iii) are satisfied, then
$(0:_R\Delta(G)) = \fb := {\textstyle \bigcap_{\fp \in
\mathcal{M}(H)}\fp}$ and
$$(0:_R\Delta(G))(0:_R\Gamma_x(H)) \subseteq (0:_R\Delta(H))
\subseteq (0:_R\Delta(G))\cap (0:_R\Gamma_x(H)).$$ Finally, suppose
that $n\in \N$ is such that $\left(\sqrt{(0:_R\Gamma_x(H))}\right)^n
\subseteq (0:_R\Gamma_x(H))$. Then
\begin{align*}
\left(\sqrt{(0:_R\Delta(H))}\right)^{n+1} & =
\sqrt{(0:_R\Delta(H))}\left(\sqrt{(0:_R\Delta(H))}\right)^{n}\\
&\subseteq
\sqrt{(0:_R\Delta(G))}\left(\sqrt{(0:_R\Gamma_x(H))}\right)^n\\
&\subseteq (0:_R\Delta(G))(0:_R\Gamma_x(H)) \subseteq
(0:_R\Delta(H))
\end{align*}
because $(0:_R\Delta(G))$ is a radical ideal.
\end{proof}

\begin{cor}
\label{jan.26} Use the notation of\/ {\rm \ref{jan.21}} and suppose
that $H$ is Artinian as an $R$-module. Then $\height (0:_R\Delta(H))
> 0$ if and only if $\height (0:_R\Gamma_x(H)) > 0$.
\end{cor}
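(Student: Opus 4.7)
The plan is to recognize that this corollary is essentially a direct packaging of Theorem \ref{jan.8} together with the finiteness result \ref{jan.23}(iv) for Artinian modules. The equivalence being claimed is weaker than the three-way equivalence of Theorem \ref{jan.8}, because the $\mathcal{M}(G)$-finiteness in condition (iii) comes for free under the Artinian hypothesis.

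For the forward direction, I would not use the Artinian hypothesis at all: by Remark \ref{jan.4} we have $(0:_R\Delta(H)) \subseteq (0:_R\Gamma_x(H))$, so if the former has positive height then so does the latter. (This is also immediate from the implication (i) $\Rightarrow$ (ii) of Theorem \ref{jan.8}.)

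For the reverse direction, suppose $\height (0:_R\Gamma_x(H)) > 0$. To apply the implication (iii) $\Rightarrow$ (i) of Theorem \ref{jan.8}, I need only verify that $\mathcal{M}(G)$ is finite. Since $G = H/\Gamma_x(H)$ is a homomorphic image of $H$ and $H$ is Artinian as an $R$-module, $G$ is Artinian as an $R$-module. Reminder \ref{jan.23}(iv) then gives that $\mathcal{I}(G)$ is finite. Since $\mathcal{M}(G)$ is, by definition, a subset of $\mathcal{I}(G)\cap\Spec(R)$, it too is finite. Condition (iii) of Theorem \ref{jan.8} is therefore satisfied, so $\height (0:_R\Delta(H)) > 0$.

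There is no real obstacle here: the whole content of the corollary is that the Artinian condition on $H$ (as an $R$-module) automatically supplies the finiteness of $\mathcal{M}(G)$ that Theorem \ref{jan.8}(iii) demands in addition to positive height of $(0:_R\Gamma_x(H))$. The only slightly non-trivial observation is that Artinianness passes to the quotient $G$, which is routine.
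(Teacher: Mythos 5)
Your proof is correct and follows essentially the same route as the paper: pass Artinianness from $H$ to $G$, invoke \ref{jan.23}(iv) to get finiteness of $\mathcal{I}(G)$ and hence of $\mathcal{M}(G)$, and then apply the equivalence of (i) and (iii) in Theorem \ref{jan.8}. The extra remark that the forward direction needs no Artinian hypothesis is a pleasant observation but does not change the argument.
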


\begin{proof} Since $H$ is Artinian as an $R$-module, so too is $G$.
Therefore $\mathcal{I}(G)$ is finite, by \ref{jan.23}(iv). It
follows that $\mathcal{M}(G)$ is automatically finite, and so the
claim follows from the equivalence of (i) and (iii) in Theorem
\ref{jan.8}.
\end{proof}

In the case where $(R,\fm)$ is local and complete and satisfies the
condition $(R_0)$, we shall use Corollary \ref{jan.26} in \S\S
\ref{set},\ref{hrlr} in order to produce a left $R[x,f]$-module
structure on the injective envelope $E$ of the simple $R$-module for
which $\height (0:_R\Delta(E)) > 0$: the corollary shows that it is
enough for us to arrange that $\height (0:_R\Gamma_x(E)) > 0$.

\section{\it Invariance of the ideal $(0:_R\Delta(H))$}
\label{inv}

For reasons related to tight closure, we are going to be interested
in left $R[x,f]$-modules $H$ that satisfy the three equivalent
conditions of Theorem \ref{jan.8}. We shall wish to perform various
operations and constructions on such $H$s, and the purpose of this
section is to show that the operations we have in mind do not
destroy the properties described in \ref{jan.8}(i),(ii),(iii). The
key point is that the ideal $(0:_R\Delta(H))$ is left unchanged by
the operations.

\begin{ex}
\label{jan.10} Use the notation of\/ {\rm \ref{jan.21}}. For all $n
\in \nn$, set $H_n := H$. Then the $R$-module $\widetilde{H} :=
\bigoplus_{n\in\nn} H_n$ has a natural structure as a graded left
$R[x,f]$-module under which the result of multiplying $h_n \in H_n =
H$ on the left by $x$ is the element $xh_n \in H_{n+1} = H$. We
refer to $\widetilde{H}$ as the {\em graded companion of $H$}.

It is easy to check that $\Delta(\widetilde{H}) =
\widetilde{\Delta(H)}$, so that $(0:_R\Delta(\widetilde{H})) =
(0:_R\Delta(H))$, that $\Gamma_x(\widetilde{H}) =
\widetilde{\Gamma_x(H)}$, so that $(0:_R\Gamma_x(\widetilde{H})) =
(0:_R\Gamma_x(H))$, and that, if $\fA$ is a graded two-sided ideal
of $R[x,f]$, then $(0:_R\ann_{\widetilde{H}}\fA) = (0:_R\ann_H\fA)$.
\end{ex}

\begin{ex}
\label{jan.11} Let $(H^{(\lambda)})_{\lambda\in\Lambda}$ be a
non-empty family of $\Z$-graded left $R[x,f]$-modules, with gradings
given by $H^{(\lambda)}= \bigoplus_{n\in\Z}H^{(\lambda)}_n$ for each
$\lambda\in\Lambda$. The $R$-module
\[
\prod_{\lambda\in\Lambda}{\textstyle ^{^{^{\Large
\prime}}}}H^{(\lambda)} :=
\bigoplus_{n\in\Z}\left(\prod_{\lambda\in\Lambda}H^{(\lambda)}_n\right)
\]
is a ($\Z$-graded) left $R[x,f]$-module in which
\[x\big(h^{(\lambda)}_n\big)_{\lambda\in\Lambda} =
\big(xh^{(\lambda)}_n\big)_{\lambda\in\Lambda}\in
\prod_{\lambda\in\Lambda}H^{(\lambda)}_{n+1} \quad \mbox{for all~}
\big(h^{(\lambda)}_n\big)_{\lambda\in\Lambda} \in
\prod_{\lambda\in\Lambda}H^{(\lambda)}_n.\] See \cite[Lemma
2.1]{gatcti}. In this paper, we shall refer to
$\prod^{\prime}_{\lambda\in\Lambda}H^{(\lambda)}$ as the {\em graded
product\/} of the $H^{(\lambda)}$.

It is routine to check that, if there is an ideal $\fd_0$ of $R$
such that $(0:_R\Delta(H^{(\lambda)})) = \fd_0$ for all
$\lambda\in\Lambda$, then $\left( 0:_R
\Delta\left(\prod^{\prime}_{\lambda\in\Lambda}H^{(\lambda)}\right)\right)
= \fd_0$. Also, if there is an ideal $\fa_0$ of $R$ such that
$(0:_R\Gamma_x(H^{(\lambda)})) = \fa_0$ for all $\lambda\in\Lambda$,
then $\left( 0:_R
\Gamma_x\!\left(\prod^{\prime}_{\lambda\in\Lambda}H^{(\lambda)}\right)\right)
= \fa_0$.

Let $\fA$ be a graded two-sided ideal of $R[x,f]$. If there is an
ideal $\fb_0$ of $R$ such that $(0:_R\ann_{H^{(\lambda)}}\fA) =
\fb_0$ for all $\lambda\in\Lambda$, then
$\left(0:_R\ann_{\prod^{\prime}_{\lambda\in\Lambda}H^{(\lambda)}}\fA\right)
= \fb_0$ (by \cite[Remark 2.2]{gatcti}).
\end{ex}

\begin{ex}
\label{jan.12} Let $H = \bigoplus_{n\in\Z}H_n$ be a $\Z$-graded left
$R[x,f]$-module and let $t \in \Z$. The result of application of the
{\em $t$th shift functor\/} to $H$ is $H(t) =
\bigoplus_{n\in\Z}H(t)_n$, where $H(t)_n = H_{t+n}$ for all $n \in
\Z$. It is clear that $\Delta(H(t)) = \Delta(H)(t)$, so that
$$(0:_R\Delta(H(t))) = (0:_R\Delta(H)),$$ and similarly that
$(0:_R\Gamma_x(H(t))) = (0:_R\Gamma_x(H))$, and that, for a graded
two-sided ideal $\fA$ of $R[x,f]$, we have $(0:_R\ann_{H(t)}\fA) =
(0:_R\ann_H\fA)$.
\end{ex}

The next proposition is concerned with an $h$-place extension of a
graded left $R[x,f]$-module $W := \bigoplus_{n \geq b} W_n$, where
$b, h \in \N$. The definition of this concept is given in
\cite[Definition 4.5]{Fpurhastest}, but we recall it here. Let
$(g_i)_{i\in I}$ be a family of arbitrary elements of $W_b$, and let
$V$ denote the free $R$-module $\bigoplus_{i\in I}R_i$ (where $R_i =
R$ for all $i \in I$); also, let $f : V \lra V$ be the Frobenius map
for which $f((r_i)_{i\in I}) = (r_i^p)_{i\in I}$ for all
$(r_i)_{i\in I} \in V$, and set $K := \left\{ (r_i)_{i\in I} \in V :
{\textstyle \sum_{i\in I}r_ig_i = 0}\right\}. $ The {\em $h$-place
extension $\ext(W;(g_i)_{i\in I};h)$ of $W$ by $(g_i)_{i\in I}$\/}
is the graded left $R[x,f]$-module
\[ \left(V/f^{-h}(K)\right)\oplus \cdots \oplus
\left(V/f^{-1}(K)\right)\oplus W_b \oplus  \cdots \oplus W_t \oplus
\cdots
\]
which has $W$ as a graded $R[x,f]$-submodule and is such that
$$x((r_i)_{i\in I} + f^{-1}(K)) = {\textstyle \sum_{i\in I}r_i^pg_i} \quad
\text{for all~} (r_i)_{i\in I} \in V$$ and $x(v+ f^{-j}(K)) = f(v)+
f^{-(j-1)}(K)$ for all $v \in V$ and $j = h,h-1, \ldots, 2.$

We shall now examine what happens to the ideals $(0:_R\Delta(W))$,
$(0:_R\Gamma_x(W))$ and $(0:_R\ann_W\fA)$, where $\fA$ is a graded
two-sided ideal of $R[x,f]$, when we pass from $W$ to the $h$-place
extension $\ext(W;(g_i)_{i\in I};h)$.

\begin{prop}
\label{jan.13} Let the notation be as in the above two paragraphs.
Then
\begin{enumerate}
\item $(0:_R\Delta(\ext(W;(g_i)_{i\in I};h))) = (0:_R\Delta(W))$,
\item $(0:_R\Gamma_x(\ext(W;(g_i)_{i\in I};h))) =
(0:_R\Gamma_x(W))$, and
\item $(0:_R\ann_{\ext(W;(g_i)_{i\in I};h)}\fA) = (0:_R\ann_W\fA)$
for each graded two-sided ideal $\fA$ of $R[x,f]$.
\end{enumerate}
\end{prop}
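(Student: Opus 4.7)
The plan is to handle all three parts by a single uniform argument. Write $E := \ext(W;(g_i)_{i\in I};h)$, and consider any one of the three $R[x,f]$-submodules $N \subseteq E$ under study---$\Delta(E)$, $\Gamma_x(E)$, or $\ann_E \fA$---together with the corresponding submodule $N'$ of $W$ (namely $\Delta(W)$, $\Gamma_x(W)$, or $\ann_W \fA$). The goal in each case is to show $(0:_R N) = (0:_R N')$.

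The first, easy observation is that $N \cap W = N'$. For $\Delta$ and $\Gamma_x$ this is immediate from the definitions, since $W$ is an $R[x,f]$-submodule of $E$ and the defining annihilation conditions are internal to the module action; the analogous equality for $\ann$ is equally direct. This already yields the inclusion $(0:_R N) \subseteq (0:_R N')$ and reduces everything to the reverse inclusion.

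The heart of the argument is the injectivity, for each $j \in \{1,\ldots,h\}$, of the map
\[
x^j : V/f^{-j}(K) \longrightarrow W_b, \qquad v + f^{-j}(K) \longmapsto \textstyle\sum_{i\in I} v_i^{p^j} g_i
\]
(where $v = (v_i)_{i\in I}$); its kernel consists of those classes with $f^j(v) \in K$, i.e. $v \in f^{-j}(K)$, so is zero. Granting this, the reverse inclusion runs as follows. Let $r \in (0:_R N')$ and pick a homogeneous $e \in N$; this is legitimate since each of the three $N$s is a graded submodule of the $\Z$-graded $E$ (for $\Delta(E)$ this is part of Definition \ref{jan.3}, for the others it is immediate from the gradedness of $\Gamma_x$ and of $\fA$). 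If $\deg e \geq b$, then $e \in W \cap N = N'$, so $re = 0$. Otherwise $e \in V/f^{-j}(K)$ for some $j \in \{1,\ldots,h\}$; then $x^j e \in W$, and since $N$ is an $R[x,f]$-submodule also $x^j e \in N$, so $x^j e \in N \cap W = N'$. Because $(0:_R N')$ is an ideal, $r^{p^j}$ annihilates $N'$ as well, so $r^{p^j} \cdot x^j e = 0$; using the relation $x^j r = r^{p^j} x^j$ in $R[x,f]$ this reads $x^j(re) = 0$, and injectivity of $x^j$ on $V/f^{-j}(K)$ forces $re = 0$, as required.

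The only step needing a small extra check is in part (iii): to conclude $x^j e \in \ann_W \fA$ from $e \in \ann_E \fA$ one needs, for each $a \in \fa_n$, that $(ax^n)(x^j e) = ax^{n+j} e$ vanishes---which uses precisely that $\fA = \bigoplus_n \fa_n x^n$ is a \emph{graded} two-sided ideal, so that the ascending chain $\fa_n \subseteq \fa_{n+j}$ gives $ax^{n+j} \in \fA$. This is the only place the particular form of $\fA$ enters, and I do not anticipate any serious obstacle beyond establishing the injectivity of the connecting maps $x^j$.
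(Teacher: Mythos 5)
Your proof is correct and uses essentially the same key computation as the paper's: the injectivity of $x^j$ (carrying the extra low-degree component $V/f^{-j}(K)$ into $W_b$), combined with the commutation rule $x^jr = r^{p^j}x^j$ and the fact that $(0:_R N')$ is an ideal. The paper's proof packages this slightly differently---it reduces to the case $h=1$ by iterating $1$-place extensions and then proves a single lemma applicable to an arbitrary graded $R[x,f]$-submodule $L'$ of $W'$, whereas you handle general $h$ directly and verify $N\cap W = N'$ for each of the three specific submodules---but the underlying calculation is the same, and your ``small extra check'' for part (iii) is in fact already subsumed by the observation that $\ann_E\fA$ is an $R[x,f]$-submodule of $E$.
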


\begin{proof} We can construct the $h$-place extension $\ext(W;(g_i)_{i\in
I};h)$ of\/ $W$ by a sequence of $h$ $1$-place extensions. See
\cite[Definition 4.5]{Fpurhastest}. Thus it is enough to prove this
result in the case where $h = 1$, and this is what we shall do. Set
$W' := \ext(W;(g_i)_{i\in I};1)$.

For $n \in \Z$, we shall denote the $n$th component of a $\Z$-graded
left $R[x,f]$-module $G$ by $G_n$. We begin by showing that if $L' =
\bigoplus_{n\geq b-1}L'_n$ is a graded $R[x,f]$-submodule of $W'$
and $L := \bigoplus_{n\geq b}L'_n$, then $(0:_RL') = (0:_RL)$. It is
clear that $(0:_RL') \subseteq (0:_RL)$, because $L$ is an
$R[x,f]$-submodule of $L'$. Let $a \in (0:_RL)$; then $aL'_n = 0$
for all $n \geq b$, and we now show that $aL'_{b-1} = 0$ also. Let
$w \in L'_{b-1}$, so that $w = (r_i)_{i\in I} + f^{-1}(K)$ for some
$(r_i)_{i\in I} \in V$. Then $xw \in L'_b \subseteq L$, so that $axw
= 0$; therefore $xaw = a^pxw = 0$, so that $\sum_{i\in I}a^pr_i^pg_i
= 0$. Therefore $(ar_i)_{i\in I} \in f^{-1}(K)$ and $aw = 0$. It
follows that $(0:_RL') \supseteq (0:_RL)$, so that $(0:_RL') =
(0:_RL)$.

(i) If we choose $L'$ in the above paragraph to be the graded
$R[x,f]$-submodule $\Delta(W')$ of $W'$, then $L$ becomes
$\Delta(W)$, and the paragraph shows that $(0:_R\Delta(W)) =
(0:_R\Delta(W')) = (0:_R\Delta(\ext(W;(g_i)_{i\in I};1)))$.

(ii) Similarly, if we choose $L'$ in the second paragraph of this
proof to be the graded $R[x,f]$-submodule $\Gamma_x(W')$ of $W'$,
then $L$ becomes $\Gamma_x(W)$, and the paragraph shows that
$(0:_R\Gamma_x(W)) = (0:_R\Gamma_x(W')) =
(0:_R\Gamma_x(\ext(W;(g_i)_{i\in I};1)))$.

(iii) If we choose $L'$ in the second paragraph of this proof to be
the graded $R[x,f]$-submodule $\ann_{W'}\fA$ of $W'$, then $L$
becomes $\ann_{W}\fA$, and the paragraph shows that
$(0:_R\ann_{W}\fA) = (0:_R\ann_{W'}\fA) =
(0:_R\ann_{\ext(W;(g_i)_{i\in I};1)}\fA)$.
\end{proof}

We adopt the convention that a graded left $R[x,f]$-module $W$ as
above is considered as a $0$-place extension of itself.

\section{\it An embedding theorem}
\label{set}

In this section we shall present an embedding theorem that is
similar to, but different from, the embedding theorems of \cite[\S
4]{Fpurhastest}. Here we base the argument on the direct product of
a family of copies of an injective cogenerator of $R$, and this
approach facilitates a more rapid development than that in \cite[\S
4]{Fpurhastest}. Recall that an {\em injective cogenerator of $R$\/}
is an injective $R$-module $E$ with the property that, for every
$R$-module $M$ and every non-zero element $m \in M$, there exists an
$R$-homomorphism $\phi : M \lra E$ such that $\phi(m) \neq 0$. As
$R$ is Noetherian, $\bigoplus_{\fm \in \Max(R)} E_R(R/\fm)$, where
$\Max(R)$ denotes the set of maximal ideals of $R$, is one injective
cogenerator of $R$.

\begin{lem}
\label{pl.5a} Let $E$ be an injective cogenerator of $R$. Assume
that there exists an $\nn$-graded left $R[x,f]$-module $H =
\bigoplus_{n\in\nn}H_n$ such that $H_0$ is $R$-isomorphic to $E$.

Let $M$ be a non-zero $R$-module, and let $J := M \setminus \{0\}$,
the set of non-zero elements of $M$.  For each $j \in J$, set
$H^{(j)} := H$. Then there exists a homogeneous
$R[x,f]$-homomorphism
\[
\lambda : R[x,f]\otimes_RM \lra \prod_{j\in J}{\textstyle
^{^{^{\Large \prime}}}}H^{(j)} =: L
\]
such that its component $\lambda_0$ of degree zero is a
monomorphism.
\end{lem}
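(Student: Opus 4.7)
The plan is to exploit two things: the cogenerating property of $E\cong H_0$, and the universal property of the induced module $R[x,f]\otimes_RM$. Since $R[x,f]\otimes_RM = \bigoplus_{n\in\nn} Rx^n\otimes_RM$ is generated in degree $0$ by $1\otimes M\cong M$, any homogeneous $R[x,f]$-homomorphism of degree $0$ from $R[x,f]\otimes_RM$ to a $\Z$-graded left $R[x,f]$-module $N$ is completely determined by its degree-$0$ component, which may be chosen to be an arbitrary $R$-homomorphism $M\to N_0$; conversely, any such $R$-homomorphism extends to a unique homogeneous $R[x,f]$-map. So the problem reduces to constructing an $R$-monomorphism $\mu:M\lra L_0$.

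Note that, under the conventions of Example~\ref{jan.11}, $L_0 = \prod_{j\in J}H^{(j)}_0 \cong \prod_{j\in J}E$. For each $j\in J = M\setminus\{0\}$, use the cogenerating property of $E$ (transported across the isomorphism $E\cong H_0$) to choose an $R$-homomorphism $\phi_j : M \lra H_0$ with $\phi_j(j)\neq 0$. Then define
\[
\mu : M \lra L_0 = \prod_{j\in J}H^{(j)}_0, \qquad \mu(m) := (\phi_j(m))_{j\in J}.
\]
This is a well-defined $R$-homomorphism, and the ``diagonal'' trick gives injectivity: if $m\in M$ is non-zero, then $m\in J$, and the $m$-indexed coordinate of $\mu(m)$ is $\phi_m(m)\neq 0$, whence $\mu(m)\neq 0$.

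Finally, extend $\mu$ to the desired homogeneous $R[x,f]$-homomorphism $\lambda : R[x,f]\otimes_RM\lra L$ by the rule
\[
\lambda(rx^n\otimes m) := rx^n\mu(m) = \bigl(rx^n\phi_j(m)\bigr)_{j\in J} \in L_n,
\]
where the action of $rx^n$ on the graded product is the componentwise action inherited from each $H^{(j)}$. The only nontrivial verification is well-definedness on the tensor product relation $rx^n\otimes sm = rs^{p^n}x^n\otimes m$, which follows at once from the $R$-linearity of each $\phi_j$ together with the defining relation $xs = s^px$ in $R[x,f]$; this is the main (and only) technical step, and it is routine. By construction $\lambda$ is homogeneous of degree $0$ and $\lambda_0 = \mu$ is a monomorphism, which is exactly what is required.
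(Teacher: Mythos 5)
Your proof is correct and is essentially the same as the paper's: both use the cogenerating property of $E\cong H_0$ to construct, for each non-zero $m\in M$, a homomorphism $\phi_m:M\to H_0$ with $\phi_m(m)\neq 0$, assemble these into an $R$-monomorphism $M\to L_0$, and then extend to a homogeneous $R[x,f]$-map by $rx^n\otimes m\mapsto rx^n\lambda_0(m)$. Your framing via the universal property of the induced module $R[x,f]\otimes_RM$ (generated in degree $0$) is a slightly more explicit articulation of what the paper dismisses as ``straightforward to check,'' but the construction is identical.
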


\begin{proof} Let $J := M \setminus \{0\}$. For each $m \in J$ there
exists an $R$-homomorphism $\phi_m : M \lra E$ such that $\phi_m(m)
\neq 0$. Define $\lambda_0 : M \lra L_0$ by $\lambda_0(g) =
(\phi_m(g))_{m\in J}$ for all $g \in M$. Then $\lambda_0$ is an
$R$-monomorphism. (This argument is taken from the proof of Sharpe
and V\'amos in \cite[Proposition 2.25]{SV}.)

We can then define, for each $n \in \N$, an $R$-homomorphism
$\lambda_n: Rx^n \otimes_RM \lra L_n$ for which $\lambda_n(rx^n
\otimes m) = rx^n\lambda_0(m)$ for all $r \in R$ and all $m \in M$.
It is straightforward to check that the $\lambda_n~(n\in\nn)$
provide a homogeneous $R[x,f]$-homomorphism as claimed.
\end{proof}

\begin{thm}
\label{pl.5b} Let $E$ be an injective cogenerator of $R$. Assume
that there exists an $\nn$-graded left $R[x,f]$-module $H =
\bigoplus_{n\in\nn}H_n$ such that $H_0$ is $R$-isomorphic to $E$.
Let $\fA$ be a graded two-sided ideal of $R[x,f]$.

Let $M$ be an $R$-module. Then there is a family
$\left(L^{(n)}\right)_{n \in \nn}$ of\/ $\nn$-graded left
$R[x,f]$-modules, where $L^{(n)}$ is an $n$-place extension of the
$-n$th shift of a graded product of copies of $H$ (for each $n \in
\nn$), for which there exists a homogeneous $R[x,f]$-monomorphism
\[
\nu : R[x,f]\otimes_RM = \bigoplus_{i\in \nn}(Rx^i\otimes_RM) \lra
\prod_{n\in\nn}{\textstyle ^{^{^{\!\!\Large \prime}}}} L^{(n)} =: K.
\]
Consequently
\begin{enumerate}
\item $(0:_R\Gamma_x(H)) = (0:_R\Gamma_x(K)) \subseteq
(0:_R\Gamma_x(R[x,f]\otimes_RM))$;
\item $(0:_R\Delta(H)) = (0:_R\Delta(K)) \subseteq
(0:_R\Delta(R[x,f]\otimes_RM))$;
\item $(0:_R\ann_H\fA) = (0:_R\ann_K\fA) \subseteq
(0:_R\ann_{R[x,f]\otimes_RM}\fA)$;
\item if\/ $\height (0:_R\Delta(H)) > 0$, then
$(0:_R\Delta(H))\cap R^{\circ}$ consists of big test elements for
$R$, so that, in particular, $R$ has big test elements.
\end{enumerate}
\end{thm}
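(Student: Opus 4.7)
The strategy is to construct, for each $n \in \nn$, an $\nn$-graded left $R[x,f]$-module $L^{(n)}$ together with a graded $R[x,f]$-homomorphism $\rho^{(n)} : R[x,f]\otimes_R M \lra L^{(n)}$ whose degree-$n$ coordinate is injective, and then to assemble these into the required $\nu$. To this end, I would first apply the construction of Lemma \ref{pl.5a} to the single $R$-module $M_n := Rx^n \otimes_R M$: setting $J_n := M_n \setminus \{0\}$ and choosing, for each $m' \in J_n$, an $R$-homomorphism $\phi^{(n)}_{m'} : M_n \lra E$ with $\phi^{(n)}_{m'}(m') \neq 0$, one obtains an $R$-monomorphism $\mu_n : M_n \lra \prod_{m' \in J_n}(H^{(m')})_0$, where each $H^{(m')} := H$. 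Then form $P^{(n)} := \prod^{\prime}_{m' \in J_n} H^{(m')}$, shift by $-n$ to obtain $P^{(n)}(-n)$ (supported in degrees $\geq n$, whose degree-$n$ component is exactly $(P^{(n)})_0$), and finally take the $n$-place extension $L^{(n)} := \ext(P^{(n)}(-n); (g^{(n)}_m)_{m\in M}; n)$ using the family $g^{(n)}_m := \mu_n(x^n \otimes m) \in (P^{(n)}(-n))_n$.

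To define $\rho^{(n)}$ in degree $j$, I would use $rx^j \otimes m \mapsto rx^{j-n}\mu_n(x^n \otimes m) \in (P^{(n)})_{j-n}$ when $j \geq n$, and $\sum r_m x^j \otimes m \mapsto (r_m)_{m \in M} + f^{-(n-j)}(K_n)$ when $j < n$, where $V_n := \bigoplus_{m \in M} R$ and $K_n := \{(r_m) : \sum r_m g^{(n)}_m = 0\}$ are the data attached to the extension. The main delicacy is the well-definedness of the second prescription: if $\sum r_m x^j \otimes m = 0$ in $M_j$, applying $x^{n-j}$ yields $\sum r_m^{p^{n-j}}(x^n \otimes m) = 0$ in $M_n$, hence, since $\mu_n$ is injective, $\sum r_m^{p^{n-j}} g^{(n)}_m = 0$, so $f^{n-j}((r_m)) \in K_n$, i.e., $(r_m) \in f^{-(n-j)}(K_n)$. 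Once this is established, routine verification using the prescribed $x$-action on the extension shows that $\rho^{(n)}$ is a graded $R[x,f]$-homomorphism whose degree-$n$ coordinate coincides with $\mu_n$ and is therefore injective.

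Assembling $\nu := (\rho^{(n)})_{n \in \nn} : R[x,f]\otimes_R M \lra K = \prod^{\prime}_{n\in\nn}L^{(n)}$, the $j$-th factor of $\nu_j$ equals $\rho^{(j)}_j = \mu_j$, which is injective; hence $\nu$ is a monomorphism. For conclusions (i)--(iii), the equalities $(0:_R \Delta(H)) = (0:_R \Delta(K))$ (and the analogues for $\Gamma_x$ and for $\ann(-,\fA)$) follow by repeated application of the invariance results of the preceding section: Example \ref{jan.11} for both the inner and outer graded products, Example \ref{jan.12} for the shift by $-n$, and Proposition \ref{jan.13} for the $n$-place extension. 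The inclusions $(0:_R \Delta(K)) \subseteq (0:_R \Delta(R[x,f]\otimes_R M))$ (and analogues) follow directly from the injectivity and $R[x,f]$-linearity of $\nu$: if $a$ annihilates $\Delta(K)$ and $g \in \Delta(R[x,f]\otimes_R M)$, then $\nu(ag) = a\nu(g) = 0$ (noting $\nu(g) \in \Delta(K)$), whence $ag = 0$; the arguments for $\Gamma_x$ and $\ann(-,\fA)$ are identical.

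Finally, (iv) is a direct consequence of (ii) and Example \ref{jan.5}. Given $c \in (0:_R\Delta(H))\cap R^{\circ}$, an arbitrary $R$-module $M$, an element $m \in 0^*_M$, and any $j \in \nn$: by Example \ref{jan.5}, $1 \otimes m$ belongs to $\Delta(R[x,f]\otimes_R M)$; since $\Delta$ is a left $R[x,f]$-submodule, $x^j \otimes m = x^j(1\otimes m) \in \Delta(R[x,f]\otimes_R M)$; then (ii) yields $c(x^j \otimes m) = cx^j \otimes m = 0$, so $cx^j$ annihilates $1 \otimes m$, confirming that $c$ is a big test element. The chief technical obstacle is the synchronised construction of $L^{(n)}$ and $\rho^{(n)}$, especially the well-definedness of $\rho^{(n)}_j$ for $j < n$, which is precisely what the Frobenius preimage $f^{-(n-j)}(K_n)$ has been engineered to accommodate.
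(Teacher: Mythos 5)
Your proof is correct and follows essentially the same route as the paper: for each $n$, apply Lemma \ref{pl.5a} to $Rx^n\otimes_R M$, identify $R[x,f]\otimes_R(Rx^n\otimes_R M)$ with $\bigoplus_{j\geq n}(Rx^j\otimes_R M)$, shift by $-n$, take an $n$-place extension, and assemble via a graded product. The one place where you diverge is that you spell out the construction of the extension map $\rho^{(n)}$ and check well-definedness in degrees below $n$ explicitly, whereas the paper delegates exactly that step to \cite[Proposition 4.6]{Fpurhastest}.
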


\begin{proof} Let $\fa_0 := (0:_R\Gamma_x(H))$, let $\fd_0 := (0:_R\Delta(H))$
and let $\fb_0 := (0:_R\ann_H\fA)$.
Let $n \in \nn$. By Lemma \ref{pl.5a}, there is a family
$(G^{(j,n)})_{j \in Y_n}$ of graded left $R[x,f]$-modules, all equal
to $H$, and a homogeneous $R[x,f]$-homomorphism
$$R[x,f]\otimes_R(Rx^n\otimes_R M) \lra \prod_{j\in
Y_n}{\textstyle ^{^{^{\Large \prime}}}}G^{(j,n)}$$ which is
monomorphic in degree $0$. (This is trivially true if $Rx^n\otimes_R
M$ happens to be zero.) If we now use isomorphisms of the type
described in Example \ref{jan.5}, we obtain (after application of
the shift functor $ (\: {\scriptscriptstyle \bullet} \:)(-n)$) a
homogeneous $R[x,f]$-homomorphism
$$
\zeta^{(n)} : \bigoplus_{j \geq n}(Rx^j\otimes_RM) \lra
\left(\prod_{j\in Y_n}{\textstyle ^{^{^{\Large
\prime}}}}G^{(j,n)}\right)(-n)
$$
which is monomorphic in degree $n$. We can now use \cite[Proposition
4.6]{Fpurhastest} to extend $\zeta^{(n)}$ by $n$ places to produce a
homogeneous $R[x,f]$-homomorphism
$$
\lambda^{(n)} : \bigoplus_{j \geq 0}(Rx^j\otimes_RM) =
R[x,f]\otimes_RM \lra L^{(n)},
$$
where $L^{(n)}$ is an appropriate $n$-place extension of
$\left(\prod_{j\in Y_n}^{\prime} G^{(j,n)}\right)(-n)$, such that
$\lambda^{(n)}$ is monomorphic in degree $n$.

It follows from Examples \ref{jan.11} and \ref{jan.12} and
Proposition \ref{jan.13} that $(0:_R\Gamma_x(L^{(n)})) = \fa_0$,
that $(0:_R\Delta(L^{(n)})) = \fd_0$ and that
$(0:_R\ann_{L^{(n)}}\fA) = \fb_0$ for all $n \in \nn$.

Next, we use the $\lambda^{(n)}~(n\in\nn)$ to produce a homogeneous
$R[x,f]$-homomorphism
\[
\nu  = \bigoplus_{j\in \nn}\nu_j : R[x,f]\otimes_RM \lra \prod_{
n\in\nn}{\textstyle ^{^{^{\!\!\Large \prime}}}} L^{(n)} =: K
\]
such that $\nu_j(\xi_j) =
\big((\lambda^{(n)})_j(\xi_j)\big)_{n\in\nn}$ for all $j \in \nn$
and $\xi_j \in Rx^j \otimes_RM$. For each $j \in \nn$, the map
$(\lambda^{(j)})_j$ is a monomorphism; hence $\nu_j$ is a
monomorphism. Hence $\nu$ is an $R[x,f]$-monomorphism.

Another use of Example \ref{jan.11} shows that $(0:_R\Gamma_x(K)) =
\fa_0$, that $(0:_R\Delta(K)) = \fd_0$ and that $(0:_R\ann_K\fA) =
\fb_0$.

The existence of the $R[x,f]$-monomorphism $\nu$ therefore shows
that
$$
\fd_0 = (0:_R\Delta(K)) \subseteq (0:_R\Delta(R[x,f]\otimes_RM)),
$$
that $\fa_0 \subseteq (0:_R\Gamma_x(R[x,f]\otimes_RM))$ and that
$\fb_0 \subseteq (0:_R\ann_{R[x,f]\otimes_RM}\fA)$.

Finally, suppose that $\height\fd_0 > 0$. Then there exist elements
in $\fd_0\cap R^{\circ}$; let $c$ be one such. Since $\fd_0
\subseteq (0:_R\Delta(R[x,f]\otimes_RM))$ for every $R$-module $M$,
it follows that $c$ is a big test element for $R$.
\end{proof}

\begin{thm}
\label{pl.5c} Let $E$ be an injective cogenerator of $R$. Then there
exists a big test element for $R$ if and only if\/ $\height
(0:_R\Delta(R[x,f]\otimes_RE)) > 0$. When this is the case,
$(0:_R\Delta(R[x,f]\otimes_RE))$ is the big test ideal of $R$ (that
is, the ideal of $R$ generated by all the big test elements for $R$)
and $(0:_R\Delta(R[x,f]\otimes_RE))\cap R^{\circ}$ is the set of big
test elements for $R$.
\end{thm}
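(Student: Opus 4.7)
The plan is to specialize Theorem \ref{pl.5b} to the $\nn$-graded left $R[x,f]$-module $H := R[x,f]\otimes_RE$, whose degree-zero component $Rx^0\otimes_RE$ is naturally $R$-isomorphic to $E$. With this choice of $H$ the hypothesis of Theorem \ref{pl.5b} is satisfied, and part (iv) of that theorem at once supplies one direction of the biconditional: if $\height(0:_R\Delta(R[x,f]\otimes_RE)) > 0$, then every element of $(0:_R\Delta(R[x,f]\otimes_RE))\cap R^{\circ}$ is a big test element for $R$.

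For the reverse direction I would argue straight from the definition. Let $c \in R^{\circ}$ be a big test element. Taking $j = 0$ in the defining condition and identifying $(R[x,f]\otimes_RN)_0$ with $N$, one sees that $c$ annihilates $0^*_N$ inside $N$ for every $R$-module $N$. Applying this with $N = Rx^n\otimes_RE$ for each $n \in \nn$ and invoking the decomposition
\[
\Delta(R[x,f]\otimes_RE) = \bigoplus_{n\in\nn} 0^*_{Rx^n\otimes_RE}
\]
of Example \ref{jan.5}, I conclude $c \in (0:_R\Delta(R[x,f]\otimes_RE))$. This simultaneously establishes the ``only if'' half of the equivalence (the ideal meets $R^{\circ}$, so has positive height) and shows that every big test element belongs to the claimed ideal.

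To finish, I would identify the big test ideal and the set of big test elements. The two preceding paragraphs already give the set equality $(0:_R\Delta(R[x,f]\otimes_RE))\cap R^{\circ} = \{\text{big test elements}\}$. For the ideal equality, I invoke Remark \ref{jan.2}: since $(0:_R\Delta(R[x,f]\otimes_RE))$ has positive height, it is generated by its intersection with $R^{\circ}$, and that intersection is precisely the set of big test elements, so this ideal coincides with the ideal they generate. No substantive obstacle is anticipated, since all of the real work is encapsulated in Theorem \ref{pl.5b}; the argument here is essentially a clean packaging of that result together with Example \ref{jan.5} and Remark \ref{jan.2}.
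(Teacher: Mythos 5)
Your argument is correct and follows essentially the same route as the paper's own proof: for one direction you specialize Theorem \ref{pl.5b} to $H = R[x,f]\otimes_RE$, and for the other you apply the definition of big test element (with $j=0$) together with the decomposition of $\Delta(R[x,f]\otimes_RE)$ from Example \ref{jan.5}, then use Remark \ref{jan.2} to pass from the set of big test elements to the ideal they generate. No gap; this is a faithful reconstruction of the intended argument.
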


\begin{proof} ($\Rightarrow$) Assume that $c \in R^{\circ}$ is a big
test element for $R$. Then, in particular, $c$ annihilates $0^*_E$
and $0^*_{Rx^h \otimes_RE}$ for all $h \in \N$. It follows from
Example \ref{jan.5} that
$$\Delta(R[x,f]\otimes_RE) = 0^*_E \oplus 0^*_{Rx \otimes_RE} \oplus
\cdots \oplus 0^*_{Rx^n \otimes_RE}\oplus \cdots.
$$
Since $c$ annihilates the right-hand side of this, $c \in
(0:_R\Delta(R[x,f]\otimes_RE))$; as $c \in R^{\circ}$, it follows
that $\height (0:_R\Delta(R[x,f]\otimes_RE)) > 0$. Note that we have
proved that every big test element for $R$ lies in
$(0:_R\Delta(R[x,f]\otimes_RE))$.

($\Leftarrow$) Assume that $\height (0:_R\Delta(R[x,f]\otimes_RE)) >
0$. Then $ (0:_R\Delta(R[x,f]\otimes_RE))$ can be generated by
(finitely many of) its members that lie in $R^{\circ}$.

We appeal to Theorem \ref{pl.5b}, with $H$ taken as
$R[x,f]\otimes_RE$; we conclude that each element of
$(0:_R\Delta(R[x,f]\otimes_RE))\cap R^{\circ}$ is a big test element
for $R$.
\end{proof}

\begin{rmk}\label{jul.1} The special case of Theorem \ref{pl.5c} in
which $(R,\fm)$ is local and Gorenstein, and $E$ is taken to be
$E_R(R/\fm)$, is worthy of additional comment. In this case, $E
\cong H^{\dim R}_{\fm}(R) =: H$, and the latter $R$-module carries a
natural structure as a left $R[x,f]$-module (as recalled in
\cite[Reminder 4.1]{ga}). By \cite[Remark 4.2(iii)]{ga}, there is an
$R$-isomorphism $Rx^n \otimes_R H \cong H$ for all $n \in \nn$, so
that $$ (0:_R\Delta(R[x,f]\otimes_RE)) = (0:_R0^*_H) = (0:_R0^*_E)$$
because $\Delta(R[x,f]\otimes_RH) = 0^*_H \oplus 0^*_{Rx \otimes_RH}
\oplus \cdots \oplus 0^*_{Rx^n \otimes_RH}\oplus \cdots $ by Example
\ref{jan.5}. G. Lyubeznik and K. E. Smith (see \cite[Theorem
8.8]{LyuSmi01}) have shown that, in a Cohen--Macaulay local ring
$(R',\fm')$ of characteristic $p$ which is Gorenstein on its
punctured spectrum, $(0:_{R'}0^*_{E_{R'}(R'/\fm')})$ is equal to the
test ideal of $R'$ \cite[Proposition (8.23)]{HocHun90}.
\end{rmk}

\begin{cor}
\label{pl.6} Assume that $(R,\fm)$ is local, and set $E :=
E_R(R/\fm)$. Suppose that $E$ can be given the structure of a left
$R[x,f]$-module that extends its structure as an $R$-module, in such
a way that $\height (0:_R\Gamma_x(E))> 0$. Then $\height
(0:_R\Delta(E)) > 0$, and $(0:_R\Delta(E)) \cap R^{\circ}$ consists
of big test elements for $R$.
\end{cor}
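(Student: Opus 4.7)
The plan is to deduce Corollary \ref{pl.6} as an almost immediate combination of Corollary \ref{jan.26} (to obtain positive height of $(0:_R\Delta(E))$) with Theorem \ref{pl.5b}(iv) (to identify members of this ideal as big test elements), mediated by the graded companion construction of Example \ref{jan.10}, which bridges the ungraded $R[x,f]$-module $E$ supplied by the hypothesis and the $\nn$-graded framework required by Theorem \ref{pl.5b}.

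First I would record the two structural facts about $E = E_R(R/\fm)$ in the local Noetherian setting: (a) $E$ is Artinian as an $R$-module, and (b) $E$ is an injective cogenerator of $R$. Fact (b) is automatic here because $\Max(R) = \{\fm\}$, so the injective cogenerator $\bigoplus_{\fn \in \Max(R)} E_R(R/\fn)$ identified in the preamble to Lemma \ref{pl.5a} coincides with $E$ itself.

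Next, I would apply Corollary \ref{jan.26} to the left $R[x,f]$-module $E$ given by hypothesis: since $E$ is $R$-Artinian and $\height(0:_R\Gamma_x(E)) > 0$, that corollary delivers $\height(0:_R\Delta(E)) > 0$, the first half of the claim. To pass from this to the statement about big test elements, I would form the graded companion $\widetilde{E} = \bigoplus_{n \in \nn} E_n$ with $E_n := E$ (Example \ref{jan.10}); this is an $\nn$-graded left $R[x,f]$-module with $\widetilde{E}_0 \cong E$, so the hypothesis of Theorem \ref{pl.5b} is in place with $H := \widetilde{E}$ and the injective cogenerator $E$ of fact (b). Example \ref{jan.10} also gives $(0:_R\Delta(\widetilde{E})) = (0:_R\Delta(E))$, so the positive-height property transports to $\widetilde{E}$, and then Theorem \ref{pl.5b}(iv) shows that every element of
\[
(0:_R\Delta(\widetilde{E})) \cap R^{\circ} \;=\; (0:_R\Delta(E)) \cap R^{\circ}
\]
is a big test element for $R$, completing the proof.

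No step is a serious obstacle: each is a direct citation of a result already proved in the excerpt. The only point that deserves a moment of care is the verification that $E_R(R/\fm)$ really is an injective cogenerator in the local case, but this is essentially noted by the paper at the start of \S \ref{set}, and a short Nakayama argument on a cyclic submodule $Rm$ supplies an independent check if needed.
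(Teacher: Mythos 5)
Your proof is correct and follows the same route as the paper: Corollary \ref{jan.26} for the positive-height conclusion, then the graded companion $\widetilde{E}$ of Example \ref{jan.10} fed into Theorem \ref{pl.5b}(iv). The only difference is that you spell out the (easy) fact that $E_R(R/\fm)$ is an injective cogenerator of the local ring $R$, which the paper's proof leaves implicit.
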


\begin{proof} Since $E$ is Artinian as an $R$-module,
Corollary \ref{jan.26} shows that $\height (0:_R\Delta(E)) > 0$.

We now use the graded companion $\widetilde{E}$ of $E$. This is
described in Example \ref{jan.10}, where it is pointed out that
$(0:_R\Delta(\widetilde{E}))= (0:_R\Delta(E))$.

We appeal to Theorem \ref{pl.5b}, with $H$ taken as $\widetilde{E}$;
we conclude that each element of $(0:_R\Delta(\widetilde{E}))\cap
R^{\circ} = (0:_R\Delta(E))\cap R^{\circ}$ is a big test element for
$R$.
\end{proof}

\begin{cor}
\label{jul.26} Let $E$ be an injective cogenerator of $R$. Let $n
\in \N$ and $t \in \nn$ be such that
$$\left(\sqrt{(0:_R\Delta(R[x,f]\otimes_RE))}\right)^n \subseteq
(0:_R\Delta(R[x,f]\otimes_RE))$$ and
$\left(\sqrt{(0:_R\Gamma_x(R[x,f]\otimes_RE))}\right)^t \subseteq
(0:_R\Gamma_x(R[x,f]\otimes_RE))$.

Let $c \in R^{\circ}$ be such that some power of $c$ is a big test
element for $R$. Then $c^n$ and $c^{t+1}$ are big test elements for
$R$.
\end{cor}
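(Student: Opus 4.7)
The plan is to recognize that once Theorem \ref{pl.5c} identifies the big test ideal of $R$ with $\fd_0 := (0:_R\Delta(R[x,f]\otimes_RE))$, the corollary reduces to two containments of ideals: $c^n\in\fd_0$ and $c^{t+1}\in\fd_0$. The starting point is the observation that if some power $c^k$ of $c$ is a big test element, then by Theorem \ref{pl.5c} we have $c^k\in\fd_0\cap R^{\circ}$; in particular $\height\fd_0>0$ and $c\in\sqrt{\fd_0}$. Since $c\in R^{\circ}$, any positive power of $c$ automatically lies in $R^{\circ}$, so it remains only to show the two power membership statements, after which Theorem \ref{pl.5c} completes the proof.

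First I would handle $c^n$. This is essentially immediate: the hypothesis $\bigl(\sqrt{\fd_0}\bigr)^n\subseteq\fd_0$ applied to $c\in\sqrt{\fd_0}$ gives $c^n\in\fd_0$, so $c^n\in\fd_0\cap R^{\circ}$ is a big test element by Theorem \ref{pl.5c}.

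Next I would deduce $c^{t+1}\in\fd_0$ by feeding the hypothesis on $\fa_0:=(0:_R\Gamma_x(R[x,f]\otimes_RE))$ into Theorem \ref{jan.8}, applied to $H:=R[x,f]\otimes_RE$. Because $\height\fd_0>0$ we are in the situation covered by the last clause of that theorem, and (when $t\geq 1$) its conclusion yields
\[
\bigl(\sqrt{\fd_0}\bigr)^{t+1}\subseteq\fd_0,
\]
so $c^{t+1}\in\fd_0$ as required. The degenerate case $t=0$ must be checked separately, but is easy: $\bigl(\sqrt{\fa_0}\bigr)^0=R\subseteq\fa_0$ forces $\fa_0=R$, i.e.\ $\Gamma_x(H)=0$, so $H$ is $x$-torsion-free and $\fd_0=(0:_R\Delta(H))$ is a $G$-special $R$-ideal, hence radical by Reminders \ref{jan.23}(i); thus $\sqrt{\fd_0}=\fd_0$ and $c^{t+1}=c\in\sqrt{\fd_0}=\fd_0$.

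There is no real obstacle in this argument; the only mild subtlety is the $t=0$ edge case, which is why the hypothesis is stated with $t\in\nn$ but the statement about $n$ uses $n\in\N$ in Theorem \ref{jan.8}. Once that is handled, both $c^n$ and $c^{t+1}$ belong to $\fd_0\cap R^{\circ}$, and Theorem \ref{pl.5c} delivers the conclusion.
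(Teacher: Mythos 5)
Your proof is correct and follows essentially the same route as the paper: identify $\fd_0 := (0:_R\Delta(R[x,f]\otimes_RE))$ as the big test ideal via Theorem \ref{pl.5c}, deduce $c\in\sqrt{\fd_0}$ from the hypothesis, then apply the radical-power hypothesis directly for $c^n$ and the final clause of Theorem \ref{jan.8} (with $H = R[x,f]\otimes_RE$) for $c^{t+1}$. The one thing you do that the paper's proof does not is flag the $t=0$ case explicitly; this is reasonable caution, since the statement of Theorem \ref{jan.8} uses $n\in\nn$ but its proof opens with ``suppose that $n\in\N$''. Your separate treatment of $t=0$ (noting $\fa_0=R$ forces $\Gamma_x(H)=0$, hence $\fd_0$ is a $G$-special ideal and radical by \ref{jan.23}(i)) is correct, and in fact gives a cleaner reading than relying on the convention $(\,\cdot\,)^0=R$ inside the chain of inclusions in the proof of \ref{jan.8}.
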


\begin{proof} Since $(0:_R\Delta(R[x,f]\otimes_RE))$ is the
big test ideal of $R$ (by Theorem \ref{pl.5c}) and $c$ belongs to
the radical of this ideal, $c^n$ is a big test element for $R$.

By Theorem \ref{jan.8},
$$\left(\sqrt{(0:_R\Delta(R[x,f]\otimes_RE))}\right)^{t+1} \subseteq
(0:_R\Delta(R[x,f]\otimes_RE)),$$ so that $c^{t+1}$ is also a big
test element for $R$.
\end{proof}

\section{\it Change of rings}

Some results in \S \ref{set} concern graded left $R[x,f]$-modules of
the form $R[x,f]\otimes_RE$ for injective $R$-modules $E$. Some
choices for $E$ can lead to $R[x,f]\otimes_RE$ having a natural
structure as a graded left module over $R'[x,f]$ for a different
(commutative Noetherian) ring $R'$ of characteristic $p$. For
example, if $\fp \in \Spec (R)$, then $E_R(R/\fp)$ has a natural
structure as an $R_{\fp}$-module, and this means that
$R[x,f]\otimes_RE_R(R/\fp)$ has a natural structure as a graded left
module over $R_{\fp}[x,f]$. Similarly, if $(R,\fm)$ is local, then
each element of $E_R(R/\fm)$ is annihilated by some power of $\fm$
and $E_R(R/\fm)$ has a natural structure as a module over the
completion $\widehat{R}$ of $R$; this means that
$R[x,f]\otimes_RE_R(R/\fm)$ has a natural structure as a graded left
module over $\widehat{R}[x,f]$. This section contains some `change
of rings' theorems that address, among other things, questions about
whether application of $\Delta$ over the new ring yields the same
object as application of $\Delta$ over the original ring.

\begin{prop}
\label{feb.1} Let $\fp \in \Spec (R)$ and let\/ $\phantom{}^e$ and
$\phantom{}^c$ denote extension and contraction of ideals under the
natural ring homomorphism $R \lra R_{\fp}$. Recall that $E_R(R/\fp)$
has a natural structure as an $R_{\fp}$-module, and, as such,
$E_R(R/\fp) \cong E_{R_{\fp}}(R_{\fp}/\fp R_{\fp})$. Interpret the
$R_{\fp}$-module $E_R(R/\fp)$ as $E_{R_{\fp}}(R_{\fp}/\fp R_{\fp})$.
Then
\begin{enumerate}
\item for each $n \in \nn$, the $R$-module $Rx^n \otimes E_R(R/\fp)$
has a natural structure as an $R_{\fp}$-module extending its
$R$-module structure, and $R[x,f]\otimes_R E_R(R/\fp) =
\bigoplus_{n\in\nn} Rx^n \otimes E_R(R/\fp)$ has a natural structure
as a graded left $R_{\fp}[x,f]$-module that extends its structure as
a graded left $R[x,f]$-module;
\item there is a homogeneous $R_{\fp}[x,f]$-isomorphism
$$
\omega = {\textstyle \bigoplus_{n\in\nn} \omega_n : R[x,f]\otimes_R
E_R(R/\fp)} \stackrel{\cong}{\lra}
R_{\fp}[x,f]\otimes_{R_{\fp}}E_R(R/\fp)
$$
for which $\omega_n(rx^n \otimes h) = (r/1)x^n \otimes h$ for all $h
\in E_R(R/\fp)$, $r \in R$ and $n \in \nn$;
\item $\Delta_R(R[x,f]\otimes_R E_R(R/\fp)) = \Delta_{R_{\fp}}(R[x,f]\otimes_R
E_R(R/\fp))$; and
\item we have $$(0:_R
\Delta_R(R[x,f]\otimes_R E_R(R/\fp))) = (0:_{R_{\fp}}
\Delta_{R_{\fp}}(R_{\fp}[x,f]\otimes_{R_{\fp}}
E_{R_{\fp}}(R_{\fp}/\fp R_{\fp})))^c,$$ and $$(0:_{R_{\fp}}
\Delta_{R_{\fp}}(R_{\fp}[x,f]\otimes_{R_{\fp}}
E_{R_{\fp}}(R_{\fp}/\fp R_{\fp}))) = (0:_R \Delta_R(R[x,f]\otimes_R
E_R(R/\fp)))^e.$$
\end{enumerate}
\end{prop}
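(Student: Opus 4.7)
The plan is to establish (i) and (ii) by a direct construction, then prove (iii) via a prime-avoidance argument with a primary-decomposition twist, and finally deduce (iv) formally. For (i), it suffices to show that each $s\in R\setminus\fp$ acts invertibly on $Rx^n\otimes_R E_R(R/\fp)$ by left multiplication. The balancing relation $rs^{p^n}x^n\otimes e = rx^n\otimes se$ shows that left multiplication by $s^{p^n}$ equals the operation $rx^n\otimes e\mapsto rx^n\otimes se$, which is an automorphism because $s$ is a unit on the $R_\fp$-module $E_R(R/\fp)\cong E_{R_\fp}(R_\fp/\fp R_\fp)$. A general abelian-group fact---if $\phi^{p^n}$ is an automorphism with inverse $\chi$, then $\phi$ commutes with $\chi$ and $\phi^{p^n-1}\chi$ is inverse to $\phi$---then implies that multiplication by $s$ itself is invertible, extending the $R$-action to an $R_\fp$-action; the rule $x(rx^n\otimes e) = r^px^{n+1}\otimes e$ is compatible with the Frobenius commutation in $R_\fp[x,f]$ by a routine verification, completing (i). For (ii), the formula $\omega_n(rx^n\otimes e):=(r/1)x^n\otimes e$ respects the balancing on both sides; surjectivity follows by writing $(a/s)x^n\otimes e = (as^{p^n-1})x^n\otimes e'$ with $e' = e/s^{p^n}\in E_R(R/\fp)$; and injectivity follows from (i) because the source is already an $R_\fp$-module, so its natural localization map (which agrees with $\omega_n$ after standard identifications) is an isomorphism.

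For (iii), the inclusion $\Delta_R(H)\subseteq\Delta_{R_\fp}(H)$, with $H:=R[x,f]\otimes_R E_R(R/\fp)$, is immediate since $R^\circ/1\subseteq R_\fp^\circ$. Conversely, let $h\in\Delta_{R_\fp}(H)$ with $c'x^nh = 0$ for $n\gg 0$ and $c' = c_0/s_0\in R_\fp^\circ$. Since $s_0$ is a unit on the $R_\fp$-module $H$, we get $c_0\in R$ with $c_0\notin\fp_i$ for every minimal prime $\fp_i\subseteq\fp$ and $c_0x^nh=0$ for $n\gg 0$. Let $I:=\{r\in R:rx^nh=0\text{ for all }n\gg 0\}$, an ideal of $R$; then $c_0\in I$, so $I$ avoids every minimal prime of $R$ contained in $\fp$. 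The decisive further observation is that $K:=\ker(R\to R_\fp)$ lies in $I$ (because $H$ is an $R_\fp$-module, hence $K$ annihilates every element of $H$) and $K$ avoids every minimal prime $\fp_j$ with $\fp_j\not\subseteq\fp$: writing $K = \bigcap_{\sqrt{\fq}\subseteq\fp}\fq$ as the intersection over primary components of $0$ whose radicals lie in $\fp$, if $K\subseteq\fp_j$ then some such $\sqrt{\fq}$ would be contained in $\fp_j$; analysing the cases where $\sqrt{\fq}$ is itself a minimal prime (so $\sqrt{\fq}=\fp_j\subseteq\fp$) or embedded (dominating a minimal prime $\fp_{i'}$ with $\fp_{i'}\subsetneq\sqrt{\fq}\subseteq\fp_j$, forcing $\fp_{i'}=\fp_j\subseteq\fp$) yields the contradiction $\fp_j\subseteq\fp$. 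Therefore $I$ avoids every minimal prime of $R$, and prime avoidance yields $c''\in I\cap R^\circ$, witnessing $h\in\Delta_R(H)$.

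For (iv), once (iii) and (ii) are in hand, $D:=\Delta_R(H)=\Delta_{R_\fp}(H)$ is identified with $\Delta_{R_\fp}(H')$ for $H':=R_\fp[x,f]\otimes_{R_\fp}E_{R_\fp}(R_\fp/\fp R_\fp)$, and $D$ is an $R_\fp$-submodule. The two stated identities are the standard annihilator relations for an $R_\fp$-module viewed as an $R$-module: the $R$-annihilator is the contraction of the $R_\fp$-annihilator because $R$ acts on $D$ through $R\to R_\fp$, and conversely any $c_0/s_0\in(0:_{R_\fp}D)$ satisfies $c_0 = s_0(c_0/s_0)\in(0:_R D)$, giving $(0:_{R_\fp}D)\subseteq(0:_R D)R_\fp$ and hence the extension identity. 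The main obstacle is the primary-decomposition step in (iii) that forces $K\not\subseteq\fp_j$ for every minimal prime $\fp_j\not\subseteq\fp$; without it, prime avoidance would only yield an annihilator in $R_\fp^\circ$, not one in the smaller set $R^\circ$.
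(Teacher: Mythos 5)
Your proof is correct and follows essentially the same route as the paper's. The one place where you add genuine value is in part (iii): the paper's proof is compressed to the single remark that $(R_\fp(c/1))^{ce}=R_\fp(c/1)$, leaving the reader to see why the contracted ideal meets $R^\circ$ (not merely the larger set of elements whose images lie in $R_\fp^\circ$); your observation that $\ker(R\to R_\fp)$ annihilates the whole module, together with the primary-decomposition analysis showing that this kernel avoids every minimal prime $\fp_j\not\subseteq\fp$, is exactly the missing justification. Your ``$p^n$th-root of an automorphism'' argument in (i) is likewise a clean explicit version of what the paper dismisses as straightforward, and in (ii) the paper instead exhibits an explicit inverse $\lambda_n$ with $\lambda_n\left((r/s)x^n\otimes h\right)=rs^{p^n-1}x^n\otimes (1/s)h$, which you may find worth recording in place of the ``standard identifications'' appeal.
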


\begin{proof} Matsumura \cite[Theorem 18.4]{HM} is one reference for
the well-known facts that multiplication by an $s \in R \setminus
\fp$ provides an automorphism of $E_R(R/\fp)$ (so that $E_R(R/\fp)$
has a natural structure as an $R_{\fp}$-module), and that, as an
$R_{\fp}$-module, $E_R(R/\fp) \cong E_{R_{\fp}}(R_{\fp}/\fp
R_{\fp})$.

(i) Let $n \in \nn$. Since multiplication by an $s \in R \setminus
\fp$ provides an automorphism of $Rx^n \otimes_RE_R(R/\fp)$, the
latter $R$-module has a natural structure as an $R_{\fp}$-module. It
is straightforward to check that $\bigoplus_{n\in\nn} Rx^n \otimes
E_R(R/\fp)$ has a natural structure as a graded left
$R_{\fp}[x,f]$-module, as claimed.

(ii) This is also straightforward, once it has been checked that,
for $n \in \nn$, there is a map
$$
\lambda_n : R_{\fp}[x,f]\otimes_{R_{\fp}}E_R(R/\fp) \lra
R[x,f]\otimes_{R}E_R(R/\fp)
$$
for which $\lambda_n((r/s)x^n\otimes h) =
rs^{p^n-1}x^n\otimes(1/s)h$ for all $h \in E_R(R/\fp)$, $r \in R$
and $s \in R \setminus \fp$.

(iii) The natural image in $R_{\fp}$ of a $c \in R^{\circ}$ lies in
$(R_{\fp})^{\circ}$, and it is immediate from this that
$$
\Delta_R(R[x,f]\otimes_R E_R(R/\fp)) \subseteq
\Delta_{R_{\fp}}(R[x,f]\otimes_R E_R(R/\fp)).
$$
Now let $\zeta \in \Delta_{R_{\fp}}(R[x,f]\otimes_R E_R(R/\fp))$, so
that there exists $c \in R$ such that $c/1 \in (R_{\fp})^{\circ}$
and $(c/1)x^n\zeta = 0$ for all $n \gg 0$. Since
$(R_{\fp}(c/1))^{ce} = R_{\fp}(c/1)$, there exists $c' \in
R^{\circ}$ such that $c'x^n\zeta = 0$ for all $n \gg 0$. Hence
$\zeta \in \Delta_R(R[x,f]\otimes_R E_R(R/\fp))$.

(iv) It follows from part (iii) that
$$(0:_R
\Delta_R(R[x,f]\otimes_R E_R(R/\fp))) = (0:_{R_{\fp}}
\Delta_{R_{\fp}}(R[x,f]\otimes_R E_R(R/\fp)))^c,$$ so that
$$(0:_{R_{\fp}} \Delta_{R_{\fp}}(R[x,f]\otimes_R E_R(R/\fp))) = (0:_R \Delta_R(R[x,f]\otimes_R
E_R(R/\fp)))^e.$$ Since the graded left $R_{\fp}[x,f]$-modules
$R[x,f]\otimes_R E_R(R/\fp)$,
$R_{\fp}[x,f]\otimes_{R_{\fp}}E_R(R/\fp)$ and
$R_{\fp}[x,f]\otimes_{R_{\fp}}E_{R_{\fp}}(R_{\fp}/\fp R_{\fp})$ are
isomorphic, the claims follow.
\end{proof}

In the following corollary, we do not assume that $R$ is local.
Recall that a big test element $c$ for $R$ is said to be a {\em
locally stable big test element for $R$} if and only if, for every
$\fp \in \Spec (R)$, the natural image $c/1$ of $c$ in $R_{\fp}$ is
a big test element for $R_{\fp}$. It is known that a completely
stable big test element for $R$ (that is, a big test element $c$ for
$R$ such that, for every $\fp \in \Spec (R)$, the natural image
$c/1$ of $c$ in $R_{\fp}$ is a big test element for
$\widehat{R_{\fp}}$) is automatically a locally stable big test
element for $R$: see statement (e) on page 64 of Hochster's lecture
course mentioned in the Introduction. In the following corollary, we
prove the stronger statement that {\em every\/} big test element for
$R$ is automatically a locally stable big test element for $R$.

\begin{cor}
\label{pl.7} Every big test element for $R$ is automatically a
locally stable big test element for $R$.
\end{cor}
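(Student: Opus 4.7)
The plan is to combine Proposition \ref{feb.1}(iv) (which, via extension/contraction along $R\to R_\fp$, relates the $R$-annihilator of $\Delta_R(R[x,f]\otimes_R E_R(R/\fp))$ to its $R_\fp$-analogue for $E_{R_\fp}(R_\fp/\fp R_\fp)$) with Theorem \ref{pl.5c} applied to the local Noetherian ring $R_\fp$, where $E_{R_\fp}(R_\fp/\fp R_\fp)$ is an injective cogenerator.

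Fix a big test element $c$ for $R$ and $\fp \in \Spec(R)$. First I would check that $c/1 \in (R_\fp)^{\circ}$: the minimal primes of $R_\fp$ are exactly the $\fq R_\fp$ for minimal primes $\fq$ of $R$ with $\fq \subseteq \fp$, and $c\in R^{\circ}$ gives $c\notin\fq$ for every such $\fq$. Next I would verify that $c$ annihilates $\Delta_R(R[x,f]\otimes_R E_R(R/\fp))$. By Example \ref{jan.5}, this module decomposes as $\bigoplus_{h\in\nn} 0^*_{Rx^h \otimes_R E_R(R/\fp)}$, so it suffices, for each $h\in\nn$, to apply the defining property of a big test element (with $j=0$) to the $R$-module $M_h := Rx^h \otimes_R E_R(R/\fp)$: this says that $c\cdot(1\otimes \zeta)=0$ in $(R[x,f]\otimes_R M_h)_0 = M_h$ for every $\zeta \in 0^*_{M_h}$, i.e.\ $c\zeta = 0$. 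Consequently $c \in (0:_R \Delta_R(R[x,f]\otimes_R E_R(R/\fp)))$, and by Proposition \ref{feb.1}(iv), $c/1$ lies in the extension of that ideal, which coincides with $(0:_{R_\fp} \Delta_{R_\fp}(R_\fp[x,f]\otimes_{R_\fp} E_{R_\fp}(R_\fp/\fp R_\fp)))$.

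The latter $R_\fp$-ideal contains $c/1 \in (R_\fp)^{\circ}$, and therefore has positive height (Remark \ref{jan.2}). Theorem \ref{pl.5c}, applied to $R_\fp$ with the injective cogenerator $E_{R_\fp}(R_\fp/\fp R_\fp)$, then identifies it with the big test ideal of $R_\fp$ and identifies its intersection with $(R_\fp)^{\circ}$ with the set of big test elements of $R_\fp$; in particular $c/1$ is a big test element for $R_\fp$. Since $\fp$ was arbitrary, $c$ is a locally stable big test element for $R$. No real obstacle arises: the content of the previous sections has been to set up the machinery so that this statement becomes a clean formal consequence of Proposition \ref{feb.1}(iv) and Theorem \ref{pl.5c}, requiring no passage to the completion $\widehat{R_\fp}$ of the sort used in the usual completely-stable arguments.
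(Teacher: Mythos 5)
Your proof is correct and follows essentially the same route as the paper: apply Example \ref{jan.5} together with the definition of a big test element to get $c \in (0:_R\Delta_R(R[x,f]\otimes_RE_R(R/\fp)))\cap R^{\circ}$, use Proposition \ref{feb.1}(iv) to transfer this to $R_\fp$, and then invoke Theorem \ref{pl.5c} for $R_\fp$ with its injective cogenerator $E_{R_\fp}(R_\fp/\fp R_\fp)$. The only difference is that you spell out more explicitly the verifications (that $c/1\in (R_\fp)^\circ$, and that $c$ annihilates each summand $0^*_{Rx^h\otimes_R E_R(R/\fp)}$) which the paper leaves implicit.
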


\begin{proof} Let $c \in R^{\circ}$ be a big test element for $R$.
By Example \ref{jan.5}, this means that, for every $R$-module $M$,
we have $c \in (0:_R\Delta(R[x,f]\otimes_RM))$.  Let $\fp \in
\Spec(R)$. In particular, $c \in
(0:_R\Delta(R[x,f]\otimes_RE_R(R/\fp)))\cap R^{\circ}.$ It now
follows from Proposition \ref{feb.1}(iv) that, in the ring
$R_{\fp}$, we have
$$c/1 \in (0:_{R_{\fp}} \Delta_{R_{\fp}}(R_{\fp}[x,f]\otimes_{R_{\fp}}
E_{R_{\fp}}(R_{\fp}/\fp R_{\fp}))) \cap (R_{\fp})^{\circ}.$$
Therefore $c/1$ is a big test element for $R_{\fp}$, by Theorem
\ref{pl.5c}.
\end{proof}

\begin{lem}
\label{jan.16} Suppose that $(R,\fm)$ is local. Recall that
$E_R(R/\fm)$ has a natural structure as a module over the completion
$(\widehat{R},\widehat{\fm})$ of $R$, and, as an
$\widehat{R}$-module, $E_R(R/\fm) \cong
E_{\widehat{R}}(\widehat{R}/\widehat{\fm})$.

\begin{enumerate}
\item The map $\varphi : R[x,f]\otimes_RE_R(R/\fm) \lra
\widehat{R}[x,f]\otimes_{\widehat{R}}E_R(R/\fm)$ for which
$$\varphi(rx^i \otimes h) = rx^i \otimes h \quad \text{for all~} r \in R,\ i \in
\nn \text{~and~} h \in E_R(R/\fm)$$ is a homogeneous
$R[x,f]$-isomorphism.
\item  If there exists $c \in R^{\circ}$ which
is a big test element for $\widehat{R}$, then
$$
\Delta_{R}(\widehat{R}[x,f]\otimes_{\widehat{R}}E_R(R/\fm)) =
\Delta_{\widehat{R}}(\widehat{R}[x,f]\otimes_{\widehat{R}}E_R(R/\fm)).
$$
\item If there exists $c \in R^{\circ}$ which
is a big test element for $\widehat{R}$, then
$$(0:_R\Delta_{R}(R[x,f]\otimes_RE_R(R/\fm))) = R \cap
(0:_{\widehat{R}}\Delta_{\widehat{R}}(\widehat{R}[x,f]\otimes_{\widehat{R}}
E_{\widehat{R}}(\widehat{R}/\widehat{\fm}))),$$ so that every big
test element for $R$ is automatically a big test element for
$\widehat{R}$.
\end{enumerate}
\end{lem}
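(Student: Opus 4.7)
For part (i), I define, for each $n \in \nn$, a component map $\varphi_n : Rx^n \otimes_R E_R(R/\fm) \lra \widehat{R}x^n \otimes_{\widehat{R}} E_R(R/\fm)$ by $\varphi_n(rx^n \otimes h) = rx^n \otimes h$, viewing $r \in R$ as an element of $\widehat{R}$. Well-definedness is automatic since every $R$-balanced relation is $\widehat{R}$-balanced, and $\varphi := \bigoplus_{n\in \nn} \varphi_n$ is a homogeneous $R[x,f]$-homomorphism. Bijectivity rests on the fact that $E := E_R(R/\fm)$ is $\fm$-torsion: for $h \in E$ with $\fm^k h = 0$, the balanced identity $(r')^{p^n} x^n \otimes h = x^n \otimes r' h$ (valid in either tensor product) shows that $x^n \otimes h$ is annihilated by the $\fm$-primary ideal $(\fm^{[p^n]})^k$, and since $R/(\fm^{[p^n]})^k \cong \widehat{R}/(\fm^{[p^n]})^k\widehat{R}$, the action of $\widehat{R}$ on $x^n \otimes h$ factors through $R$. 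This lets me define an inverse $\psi$ by approximating any given $\widehat{r} \in \widehat{R}$ by a representative $r \in R$ modulo a sufficiently high $\fm$-primary power; carefully tracking the Frobenius-twisted bimodule structure of $Rx^n$ against the torsion filtration on $E$ is the main technical point of the argument.

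For part (ii), set $M := \widehat{R}[x,f] \otimes_{\widehat{R}} E_R(R/\fm)$, which by part (i) is isomorphic as $R[x,f]$-module to $R[x,f] \otimes_R E_R(R/\fm)$. Example \ref{jan.5} applies in both flavours, giving $\Delta_R(M) = \bigoplus_{n} 0^*_{M_n,R}$ and $\Delta_{\widehat{R}}(M) = \bigoplus_{n} 0^*_{M_n, \widehat{R}}$, with the two tight closures living in the common module $M_n = \widehat{R} x^n \otimes_{\widehat{R}} E \cong R x^n \otimes_R E$. The inclusion $\Delta_R(M) \subseteq \Delta_{\widehat{R}}(M)$ reduces to the observation that $R^{\circ} \subseteq \widehat{R}^{\circ}$: going-down for the faithfully flat map $R \to \widehat{R}$ forces the contraction of every minimal prime of $\widehat{R}$ to be a minimal prime of $R$, so any $r \in R^{\circ}$ witnessing $\zeta \in \Delta_R(M)$ already lies in $\widehat{R}^{\circ}$ and witnesses $\zeta \in \Delta_{\widehat{R}}(M)$. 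The reverse inclusion $\Delta_{\widehat{R}}(M) \subseteq \Delta_R(M)$ invokes the hypothesis on $c$: for $m \in 0^*_{M_n, \widehat{R}}$, the big-test-element property of $c$ yields $cx^j(1 \otimes m) = 0$ in $\widehat{R}[x,f] \otimes_{\widehat{R}} M_n$ for every $j$; iterating part (i) identifies $\widehat{R}x^j \otimes_{\widehat{R}} M_n$ with $Rx^j \otimes_R M_n$, and since $c \in R^{\circ}$, this forces $m \in 0^*_{M_n, R}$.

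For part (iii), I combine (i) and (ii). Part (i), together with the standard identification $E_R(R/\fm) = E_{\widehat{R}}(\widehat{R}/\widehat{\fm})$ as $\widehat{R}$-modules, produces an $R[x,f]$-isomorphism between $R[x,f] \otimes_R E_R(R/\fm)$ and $\widehat{R}[x,f] \otimes_{\widehat{R}} E_{\widehat{R}}(\widehat{R}/\widehat{\fm})$, so that $\Delta_R$ of the former coincides with $\Delta_R$ of the latter; by part (ii), this equals $\Delta_{\widehat{R}}$ of the latter, and intersecting its $\widehat{R}$-annihilator with $R$ yields the stated equation. For the ``consequently'' clause: if $c' \in R^{\circ}$ is a big test element for $R$, then Theorem \ref{pl.5c} gives $c' \in (0:_R \Delta_R(R[x,f] \otimes_R E_R(R/\fm)))$; the established equation then places $c'$ inside $(0:_{\widehat{R}} \Delta_{\widehat{R}}(\widehat{R}[x,f] \otimes_{\widehat{R}} E_{\widehat{R}}(\widehat{R}/\widehat{\fm})))$; combined with $c' \in R^{\circ} \subseteq \widehat{R}^{\circ}$, Theorem \ref{pl.5c} applied to $\widehat{R}$ identifies $c'$ as a big test element for $\widehat{R}$. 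Once (i) is established, the main work is concentrated there; (ii) and (iii) follow smoothly from the above identifications together with the two key ingredients $R^{\circ} \subseteq \widehat{R}^{\circ}$ and the big-test-element property of $c$.
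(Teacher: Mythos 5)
Your proof is correct, and parts (i) and (iii) track the paper's argument closely (the paper likewise leaves the $\fm$-power-torsion verification in (i) to the reader, and (iii) is the same chain of identifications followed by an appeal to Theorem \ref{pl.5c} and $R^{\circ}\subseteq\widehat{R}^{\circ}$). Where you diverge is part (ii): you decompose both $\Delta_R$ and $\Delta_{\widehat{R}}$ grade-by-grade via Example \ref{jan.5} and then, to transfer the relation $cx^j(1\otimes m)=0$ from $\widehat{R}[x,f]\otimes_{\widehat{R}}M_n$ to $R[x,f]\otimes_R M_n$, you need an analogue of part (i) for each graded piece $M_n=\widehat{R}x^n\otimes_{\widehat{R}}E$. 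That analogue does hold — each element of $M_n$ is killed by a power of $\fm$ — but it is a further verification, not literally ``iterating part (i),'' which is stated only for $E$. The paper instead argues directly at the level of the whole module: since $c$ is a big test element for $\widehat{R}$, Theorem \ref{pl.5c} (applied over $\widehat{R}$, with $E_{\widehat{R}}(\widehat{R}/\widehat{\fm})$ as injective cogenerator) gives $c\in(0:_{\widehat{R}}\Delta_{\widehat{R}}(\widehat{R}[x,f]\otimes_{\widehat{R}}E))$; since $\Delta_{\widehat{R}}(\cdots)$ is an $R[x,f]$-submodule, every $h$ in it satisfies $cx^nh=0$ for all $n$, and $c\in R^{\circ}$ then places $h$ in $\Delta_R(\cdots)$. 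This avoids both the graded decomposition and any extension of part (i) beyond $E$ itself, and is the cleaner route. Your version is sound once the auxiliary identification for $M_n$ is spelled out, but it carries more overhead than the direct argument the paper uses.
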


\begin{proof} (i) It is straightforward to construct a proof of this based on the fact that each element of
$E_R(R/\fm)$ is annihilated by some power of $\fm$, so that, for $i
\in\nn$, each element of $Rx^i \otimes_RE_R(R/\fm)$ is annihilated
by some power of $\fm$. The details of this are left to the reader.

(ii),(iii) First note that $R^{\circ} \subseteq
\widehat{R}^{\circ}$, and it is automatic from this that
$$
\Delta_{R}(\widehat{R}[x,f]\otimes_{\widehat{R}}E_R(R/\fm))
\subseteq
\Delta_{\widehat{R}}(\widehat{R}[x,f]\otimes_{\widehat{R}}E_R(R/\fm)).
$$
Now suppose that there exists $c \in R^{\circ}$ which is a big test
element for $\widehat{R}$, so that $c \in R^{\circ}\cap
(0:_{\widehat{R}}\Delta_{\widehat{R}}(\widehat{R}[x,f]\otimes_{\widehat{R}}E_R(R/\fm)))$
by Theorem \ref{pl.5c}. Then each element of
$\Delta_{\widehat{R}}(\widehat{R}[x,f]\otimes_{\widehat{R}}E_R(R/\fm))$
is annihilated by $cx^n$ for all $n \in \nn$, and so belongs to
$\Delta_{R}(\widehat{R}[x,f]\otimes_{\widehat{R}}E_R(R/\fm))$.
Therefore $$
\Delta_{R}(\widehat{R}[x,f]\otimes_{\widehat{R}}E_R(R/\fm)) =
\Delta_{\widehat{R}}(\widehat{R}[x,f]\otimes_{\widehat{R}}E_R(R/\fm)),
$$ and part (i) shows that the $R$-annihilator of this is the
$R$-annihilator of $$\Delta_R(R[x,f]\otimes_RE_R(R/\fm)).$$ Since
$E_R(R/\fm) \cong E_{\widehat{R}}(\widehat{R}/\widehat{\fm})$ as
$\widehat{R}$-modules, we deduce that
$$(0:_R\Delta_{R}(R[x,f]\otimes_RE_R(R/\fm))) = R \cap
(0:_{\widehat{R}}\Delta_{\widehat{R}}(\widehat{R}[x,f]\otimes_{\widehat{R}}
E_{\widehat{R}}(\widehat{R}/\widehat{\fm}))).$$ An appeal to Theorem
\ref{pl.5c}, in conjunction with the observation that $R^{\circ}
\subseteq \widehat{R}^{\circ}$, now completes the proof.
\end{proof}

\section{\it A little tight closure theory, but without the Gamma
construction}

One of the aims of this paper is the construction of test elements
without use of the Gamma construction. For explanations of what it
means to say that $R$ is $F$-regular, weakly $F$-regular,
$F$-rational or $F$-pure, see \cite[Definitions 1.2 and 3.11 and
Exercise 2.11]{Hunek96}. All the results listed in the following
reminder can be proved without use of the Gamma construction, and,
when it is not obvious how to do this, some hints are provided.

\begin{rmds}
\label{tcjalg.1} (i) (See Hochster--Huneke \cite[Corollary (5.11),
Proposition (8.7)]{HocHun90}.) \linebreak \indent \quad \quad \; If
$R$ is weakly $F$-regular, then it is normal and $F$-pure.
\begin{enumerate}
\setcounter{enumi}{1}
\item (See Huneke \cite[Proposition 1.5(b)]{Hunek98}.) Let $W$ be a multiplicatively closed subset of
$R$, and let $\fa$ be an ideal of $R$ generated by a regular
sequence. Then $(\fa W^{-1}R)^* = \fa^*W^{-1}R$.
\item (See Hochster--Huneke \cite[Proposition (4.7)]{HocHun94}.)
Assume that $(R,\fm)$ is local and Gorenstein. Then $R$ is weakly $F$-regular if and only
if the ideal generated by one single system of parameters for $R$ is
tightly closed.
\item (See Hochster--Huneke \cite[Proposition (4.7)]{HocHun94}.)
Suppose that $R$ is Gorenstein (but not necessarily local).
Then the following statements are equivalent:
\begin{enumerate}
\item $R$ is $F$-rational;
\item $R$ is weakly $F$-regular;
\item $R$ is $F$-regular.
\end{enumerate}
\item (See K. E. Smith \cite[Theorem
2.6]{Smi97}.) Suppose that $(R,\fm)$ is local; let $\dim R = d$.
Then $H^d_{\fm}(R)$, with its natural left $R[x,f]$-module
structure, is simple over $R[x,f]$ if
\begin{enumerate} \item $R$ is complete, Gorenstein and weakly
$F$-regular, or
\item $R$ is regular.
\end{enumerate}
\end{enumerate}
\end{rmds}

\begin{proof}[Hints for avoidance of the Gamma construction.] (i)
For each finitely generated $R$-module $N$, the map $\psi_N : N \lra
Rx\otimes_RN$ for which $\psi_N(g) = x\otimes g$ for all $g \in N$
has $\Ker \psi_N \subseteq 0^*_N$.

(ii) Huneke's proof in \cite[Proposition 1.5(b)]{Hunek98} works
without the hypothesis that $R$ is excellent and local, and there is
no need to employ a test element in the final paragraph.

(iii),(iv) Once one has part (ii) available, these can be proved
(without use of test elements) via the arguments in Huneke
\cite[Theorem 1.5(1),(2),(3),(5)]{Hunek96}.

(v) The argument used by Smith does not need test elements in the
complete, Gorenstein, weakly $F$-regular case (a); the regular case
(b) can then be dealt with by passage to the completion.
\end{proof}

The next result is a consequence of \ref{tcjalg.1}(i),(iii),(iv) and
it will help to simplify some of the proofs later in the paper.

\begin{prop}
\label{tc.postant} Suppose that $R$ satisfies condition $(R_0)$ and
that its distinct minimal prime ideals are $\fp_1, \ldots, \fp_h$;
set $\fn := \sqrt{0} = \fp_1 \cap \cdots \cap \fp_h$. Let $c \in
R^{\circ}$ be such that $R_c$ is $F$-regular. Let $\overline{c}$
denote the natural image of $c$ in $R/\fn$. Let $j \in
\{1,\ldots,h\}$ and let $\widetilde{c}$ denote the natural image of
$c$ in $R/\fp_j$.

\begin{enumerate}
\item There is a ring isomorphism $R_c \cong (R/\fn)_{\overline{c}}$; also, $(R/\fp_j)_{\widetilde{c}}$
is isomorphic as a ring to a direct factor of $R_c$, and therefore
to a ring of fractions of $R_c$.

\item If some power of $\overline{c}$ is a big test element for
$R/\fn$, then some power of $c$ is a big test element for $R$.

\item If, for each $i \in \{1,\ldots,h\}$, some power of the natural
image of $c$ in $R/\fp_i$ is a big test element for $R/\fp_i$, then
some power of $\overline{c}$ is a big test element for $R/\fn$, so
that, by part\/ {\rm (ii)}, some power of $c$ is a big test element
for $R$.
\end{enumerate}
\end{prop}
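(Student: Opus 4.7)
The key observation driving all three parts will be that, by hypothesis, $R_c$ is weakly $F$-regular, hence normal by Reminder \ref{tcjalg.1}(i), and in particular reduced.

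For (i), reducedness of $R_c$ will force $\fn R_c = 0$, so the canonical surjection $R_c \twoheadrightarrow (R/\fn)_{\overline c}$ is an isomorphism. A Noetherian normal ring decomposes as a finite direct product of normal integral domains indexed by its minimal primes; writing $\overline{\fp}_i := \fp_i/\fn$, I will verify that the minimal primes of $R_c$ are the $\overline{\fp}_i R_c$ with quotients $(R/\fp_i)_{\widetilde c}$, giving $R_c \cong \prod_{i=1}^h (R/\fp_i)_{\widetilde c}$; each direct factor is the localization at the corresponding idempotent, hence a ring of fractions of $R_c$.

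For (ii), I will exploit reducedness of $R_c$ together with finite generation of $\fn$ to produce an integer $N_0$ with $c^{N_0}\fn = 0$ in $R$. Given an $R$-module $M$ and $m \in 0^*_M$, a routine check --- descending a witness $d \in R^\circ$ along the natural map $Rx^j \otimes_R M \to (R/\fn)\overline x^j \otimes_{R/\fn} (M/\fn M)$ to $\overline d \in (R/\fn)^\circ$ --- shows that the image of $m$ in $M/\fn M$ lies in $0^*_{M/\fn M}$ over $R/\fn$. Writing $\overline c^e$ for the given big test element of $R/\fn$, the hypothesis yields $c^e m \in \fn M$, whence $c^{e+N_0} m = c^{N_0}(c^e m) \in c^{N_0}\fn \cdot M = 0$. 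Since this exponent is uniform in $M$ and $m$, $c^{e+N_0}$ is a big test element for $R$.

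For (iii), set $\overline R := R/\fn$. By (i), the injection $\overline R \hookrightarrow \prod_{i=1}^h R/\fp_i$ becomes an isomorphism after inverting $\overline c$; since the codomain is a finitely generated $\overline R$-module, its cokernel $C$ is finitely generated and $\overline c$-torsion, so there is $K_0$ with $\overline c^{K_0} C = 0$. I will then tensor $0 \to \overline R \to \prod_i R/\fp_i \to C \to 0$ with an arbitrary $\overline R$-module $N$ and invoke the long exact $\operatorname{Tor}$ sequence to identify $\bigcap_i \overline{\fp}_i N$ (the kernel of $N \to \prod_i N/\overline{\fp}_i N$) with the image of $\operatorname{Tor}_1^{\overline R}(C,N) \to N$, which is annihilated by $\overline c^{K_0}$. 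For $n \in 0^*_N$ over $\overline R$, the argument of (ii) places the image of $n$ in each $N/\overline{\fp}_i N$ in the $R/\fp_i$-tight closure of zero, so the hypothesis supplies an $e_i$ with $c^{e_i} n \in \overline{\fp}_i N$; taking $M_1 := \max_i e_i$ gives $c^{M_1} n \in \bigcap_i \overline{\fp}_i N$, hence $c^{M_1 + K_0} n = 0$. Thus $\overline c^{M_1 + K_0}$ is a big test element for $\overline R$, and the final conclusion follows from (ii). The principal obstacle is securing uniformity in (iii): without the cokernel--$\operatorname{Tor}$ argument the exponent would depend on $N$ and would be insufficient for a big test element.
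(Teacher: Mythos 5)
Your argument is substantively correct, and for parts (i) and (ii) it tracks the paper's proof quite closely. One small point in (ii) deserves to be made explicit, though: you establish only the degree-zero assertion, namely that $c^{e+N_0}m = 0$ in $M$ for every $m \in 0^*_M$, whereas by definition a big test element $c'$ must satisfy $c'x^j \otimes m = 0$ in $R[x,f]\otimes_R M$ for \emph{every} $j \in \nn$. Your conclusion does imply the full statement, since by Example \ref{jan.5} one has $x^j\otimes m \in 0^*_{Rx^j\otimes_R M}$ whenever $m\in 0^*_M$, so your degree-zero estimate applied to the module $Rx^j\otimes_R M$ handles the remaining degrees; but as written the passage ``since this exponent is uniform in $M$ and $m$'' elides this reduction. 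The paper avoids the issue by arguing uniformly in the degree from the outset, writing $c^vx^n\otimes m = \sum_i b_ix^n\otimes m_i$ with $b_i\in\fn$ and then multiplying by $c^w$ for each $n$. The same remark applies to the final line of your (iii).

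For part (iii) your route is genuinely different from the paper's and worth highlighting. Both proofs begin by passing to $R$ reduced via (ii) and by observing that the cokernel of $\psi\colon R\to\bigoplus_{i=1}^h R/\fp_i$ is a finitely generated module killed by a power $c^u$ of $c$. The paper then uses this to produce explicit elements $d_j\in R$ with $d_j\in\bigcap_{i\neq j}\fp_i$ and $d_j-c^u\in\fp_j$, so that $d_j\fp_j=0$ and $\sum_j d_j = c^u$, and deduces $c^{u+y}x^n\otimes m = 0$ by summing the equations $d_jc^yx^n\otimes m = 0$ over $j$. You instead tensor the short exact sequence $0\to\overline R\to\prod_i R/\fp_i\to C\to 0$ with an arbitrary $\overline R$-module $N$ and read off from the long exact $\operatorname{Tor}$ sequence that $\bigcap_i(\fp_i/\fn)N$ is the image of $\operatorname{Tor}_1^{\overline R}(C,N)\to N$, hence is annihilated by $\overline c^{K_0}$ by functoriality of $\operatorname{Tor}$ in the $C$-variable. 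The two arguments achieve identical uniformity; the paper's is elementary and entirely explicit, while yours is somewhat more conceptual, replaces the construction of the $d_j$ by a standard homological fact, and would generalize verbatim to any finite family of ideals with pairwise comaximal localizations.
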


\begin{proof} (i) For each
$\fp \in \Spec(R)$ such that $c \not\in \fp$, the localization
$(R_c)_{\fp R_c}$ is weakly $F$-regular, and so is an integral
domain (by \ref{tcjalg.1}(i)). It is elementary to deduce from this
that there exists $w \in \N$ such that $c^w\fn = 0$, and that the
minimal prime ideals of $R_c$ are pairwise comaximal. The claims now
all follow easily.

(ii) Let $w$ be as in the proof of part (i). Suppose that $v \in
\nn$ is such that $\overline{c}^v$ is a big test element for
$R/\fn$. Let $M$ be an $R$-module and let $m \in 0^*_M$. Let $n \in
\nn$.  It is straightforward to show that $c^vx^n\otimes m$ can be
expressed as $\sum_{i=1}^t b_ix^n \otimes m_i$ for some $b_1,
\ldots, b_t \in \fn$ and $m_1, \ldots, m_t \in M$. Since $c^w\fn =
0$, we must have $c^{w+v}x^n\otimes m = 0$. It follows that
$c^{w+v}$ is a big test element for $R$.

(iii) (The ideas used in this proof come from Hochster's and
Huneke's proof of \cite[Proposition (6.25)]{HocHun90}.) By parts (i)
and (ii), we can assume that $R$ is reduced. The natural ring
homomorphism $ R_c \lra \prod_{i=1}^h R_c/\fp_i R_c$ is an
isomorphism. It follows that the cokernel $K$ of the natural
$R$-monomorphism $\psi : R \lra \bigoplus_{i=1}^h R/\fp_i$ satisfies
$K_c = 0$; since $K$ is finitely generated, there exists $u \in \nn$
such that $c^uK = 0$.

Let $j \in \{1,\ldots,h\}$, and consider the element of
$\bigoplus_{i=1}^h R/\fp_i$ whose $j$th component is $1 + \fp_j$ and
all of whose other components are zero. Since $c^uK = 0$, there
exists $d_j \in R$ such that $ d_j \in \bigcap_{i=1,i\neq j}^h\fp_i$
and $d_j - c^u \in \fp_j$. Note that $d_j\fp_j \subseteq
\bigcap_{i=1}^h\fp_i = \fn = 0$.

Suppose now that $y \in \nn$ is such that, for all $i = 1, \ldots,
h$, the $y$th power of the natural image of $c$ in $R/\fp_i$ is a
big test element for $R/\fp_i$. Let $M$ be an $R$-module and let $m
\in 0^*_M$. It is straightforward to show that $d_jc^yx^n\otimes m =
0$ in $R[x,f]\otimes_RM$, for all $n \in\nn$. This is true for each
$j \in \{1,\ldots,h\}$. Therefore
$\left(\sum_{i=1}^hd_i\right)c^yx^n\otimes m = 0$ in
$R[x,f]\otimes_RM$, for all $n \in\nn$. But $\psi(\sum_{i=1}^hd_i) =
\psi(c^u)$, so that $\sum_{i=1}^hd_i = c^u$ because $\psi$ is a
monomorphism. Therefore $c^{u+y}x^n\otimes m = 0$ in
$R[x,f]\otimes_RM$, for all $n \in\nn$. Hence $c^{u+y}$ is a big
test element for $R$.
\end{proof}

The significance of Proposition \ref{tc.postant} is as follows: if
$R$ satisfies condition $(R_0)$ and $c \in R^{\circ}$ is such that
$R_c$ is Gorenstein and weakly $F$-regular (so that $R_c$ is
$F$-regular by \ref{tcjalg.1}(iv)), then, in order to prove that
some power of $c$ is a big test element for $R$, it is enough to do
so under the additional assumption that $R$ is a domain. In the case
where $R$ is local, it is very helpful if we can reduce to the case
where $R$ is not only a local domain, but also complete, and the
following proposition will enable us to do this in some
circumstances.

\begin{prop}\label{jul.4} Suppose that $R \subseteq R'$ is a faithfully flat
extension of (commutative Noetherian) rings of characteristic $p$.
Suppose that $c \in R^{\circ}$ is a big test element for $R'$. Then
$c$ is a big test element for $R$.
\end{prop}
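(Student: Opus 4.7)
The plan is to pass from an arbitrary $R$-module $M$ to its base change $M' := R' \otimes_R M$, invoke the big-test-element property of $c$ over $R'$ on the image of $m$ in $M'$, and then descend back to $M$ using faithful flatness of $R \to R'$.

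First, since $R \to R'$ is flat and hence satisfies going-down, any minimal prime of $R'$ contracts to a minimal prime of $R$. Consequently $R^{\circ} \subseteq (R')^{\circ}$; this makes the hypothesis ``$c$ is a big test element for $R'$'' sensible, and, more importantly, guarantees that any $c' \in R^{\circ}$ witnessing tight closure membership over $R$ automatically lies in $(R')^{\circ}$.

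Next, fix $m \in 0^*_M$ together with $c' \in R^{\circ}$ and $n_0 \in \nn$ such that $c'x^n \otimes m = 0$ in $Rx^n \otimes_R M$ for all $n \geq n_0$. There is a natural homogeneous $R[x,f]$-linear map
$$
\mu : R[x,f] \otimes_R M \lra R'[x,f] \otimes_{R'} M', \qquad \theta \otimes y \longmapsto \theta \otimes (1 \otimes y),
$$
built from the standard identifications
$$
R'x^n \otimes_{R'} M' \;\cong\; R'x^n \otimes_R M \;\cong\; R' \otimes_R (Rx^n \otimes_R M),
$$
in which the last isomorphism regards $Rx^n \otimes_R M$ as a left $R$-module via the left $R$-action on $Rx^n$. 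Applying $\mu$ transports the relations $c'x^n \otimes m = 0$ to $c'x^n \otimes (1 \otimes m) = 0$ in $R'x^n \otimes_{R'} M'$ for all $n \geq n_0$; together with $c' \in (R')^{\circ}$, this means $1 \otimes m \in 0^*_{M'}$, computed over $R'$. Since $c$ is a big test element for $R'$, we then obtain $cx^j \otimes (1 \otimes m) = 0$ in $R'x^j \otimes_{R'} M'$ for every $j \in \nn$.

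It remains to pull this back. Under the identifications above, $cx^j \otimes (1 \otimes m)$ corresponds to $1 \otimes (cx^j \otimes m) \in R' \otimes_R (Rx^j \otimes_R M)$, and the canonical map
$$
Rx^j \otimes_R M \lra R' \otimes_R (Rx^j \otimes_R M), \qquad u \longmapsto 1 \otimes u,
$$
is injective because $R \to R'$ is faithfully flat. Hence $cx^j \otimes m = 0$ already in $Rx^j \otimes_R M$ for every $j \in \nn$, which is precisely the statement that $c$ is a big test element for $R$. The only delicate point is the tensor-product bookkeeping identifying $R'x^j \otimes_R M$ with the flat extension $R' \otimes_R (Rx^j \otimes_R M)$ of a left $R$-module via the \emph{left} action on $Rx^j$; once this is in place, faithful flatness supplies the required injectivity, and no Kunz-type regularity hypothesis on $R$ itself is needed.
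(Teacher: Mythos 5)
Your proof is correct and is essentially the same argument the paper gives, only written out in more detail: the paper's one-line proof cites exactly the isomorphism $R'\otimes_R(R[x,f]\otimes_RM)\cong R'[x,f]\otimes_{R'}(R'\otimes_RM)$ (your $\mu$, degree by degree) together with faithful flatness, which is precisely the bookkeeping and descent step you carried out.
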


\begin{proof} Observe that $R^{\circ}
\subseteq R'^{\circ}$. This result is an easy consequence of the
faithful flatness and the fact that, for each $R$-module $M$, there
is a $\Z$-isomorphism
$$
\lambda : R'\otimes_R(R[x,f] \otimes_RM) \stackrel{\cong}{\lra}
R'[x,f] \otimes_{R'}(R'\otimes_RM)
$$
for which $\lambda(r'\otimes(rx^n\otimes m)) =
r'rx^n\otimes(1\otimes m)$ for all $n \in \nn$, $r' \in R'$, $r\in
R$ and $m\in M$.
\end{proof}

\section{\it Homomorphic images of regular rings of characteristic $p$}
\label{hrr}

In subsequent sections, we are going to study the situation where
$R$ is a homomorphic image of a regular ring of characteristic $p$.
One motivation for this approach comes from I. S. Cohen's Structure
Theorem for complete local rings, which shows that a complete local
ring of characteristic $p$ is a homomorphic image of a complete
regular local ring of characteristic $p$. However, the ideas in this
section will also be applied in situations where $R$ is not local,
and in situations where $R$ is local but not complete.

\begin{ntn}
\label{jun.3} We shall denote $1 + p + p^2 + \cdots +p^{n-1}$, where
$n \in \N$, by $\omega_n$, and we shall interpret $\omega_0$ as $0$.
Note that
\begin{enumerate}
\item $p\omega_n < 1 + p\omega_n = \omega_{n+1}$,
\item $(1-p)\omega_n = 1 - p^n$, and
\item $\omega_1 = 1$.
\end{enumerate}
\end{ntn}

\begin{lem}
\label{jun.2} Let $S$ be a regular ring of characteristic $p$, let
$\fp$ be a prime ideal of $S$, let $n \in \N$ and let $u \in
(\fp^{[p]}:\fp) \setminus \fp^{[p]}$. Then
$\left(\fp^{[p^n]}:u^{\omega_n}\right) = \fp$ for all $n \in \nn$.
\end{lem}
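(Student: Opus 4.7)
\medskip

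The plan is to proceed by induction on $n$, with Kunz's characterization of regularity (the Frobenius $f : S \lra S$ is flat because $S$ is regular) supplying the essential input.

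From flatness I would first extract the colon formula
\[\bigl(I^{[p]} : s^p\bigr) = (I : s)^{[p]} \quad \text{for every ideal $I \subseteq S$ and every $s \in S$,}\]
by applying the flat base-change $-\otimes_{S,f} S$ to the canonical $S$-linear injection $S/(I:s) \xrightarrow{\cdot s} S/I$ and reading off the kernel of the resulting injection. Specializing to $I = \fp$ with any $s \in S \setminus \fp$ (so that $(\fp:s) = \fp$ because $\fp$ is prime) yields $(\fp^{[p]}:s^p) = \fp^{[p]}$; since trivially $(\fp^{[p]}:s) \subseteq (\fp^{[p]}:s^p)$, this forces $(\fp^{[p]}:s) = \fp^{[p]}$. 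Thus every element of $S \setminus \fp$ is a non-zero-divisor on $S/\fp^{[p]}$, so $\fp^{[p]}$ is $\fp$-primary.

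With these two ingredients in hand the induction is short. The case $n = 0$ is vacuous since $\omega_0 = 0$ and $u^{\omega_0} = 1$. For $n = 1$: the hypothesis $u \in (\fp^{[p]}:\fp)$ gives $\fp \subseteq (\fp^{[p]} : u)$; conversely, if $au \in \fp^{[p]}$, then since $u \notin \fp^{[p]}$ and $\fp^{[p]}$ is $\fp$-primary we must have $a \in \sqrt{\fp^{[p]}} = \fp$. For the inductive step ($n \geq 2$), I use the identity $\omega_n = 1 + p\omega_{n-1}$ (\ref{jun.3}(i)), giving $u^{\omega_n} = u \cdot (u^{\omega_{n-1}})^p$. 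The inclusion $\fp \subseteq (\fp^{[p^n]} : u^{\omega_n})$ follows by combining $u\fp \subseteq \fp^{[p]}$ with the $[p]$-th Frobenius power of the inductive inclusion $u^{\omega_{n-1}}\fp \subseteq \fp^{[p^{n-1}]}$, since $(u^{\omega_{n-1}}\fp)^{[p]} = (u^{\omega_{n-1}})^p \fp^{[p]}$. For the reverse inclusion, suppose $au^{\omega_n} = a u (u^{\omega_{n-1}})^p \in \fp^{[p^n]} = (\fp^{[p^{n-1}]})^{[p]}$. The colon formula applied to $I = \fp^{[p^{n-1}]}$ and $s = u^{\omega_{n-1}}$, together with the inductive hypothesis $(\fp^{[p^{n-1}]} : u^{\omega_{n-1}}) = \fp$, yields
\[au \in \bigl((\fp^{[p^{n-1}]})^{[p]} : (u^{\omega_{n-1}})^p\bigr) = \bigl(\fp^{[p^{n-1}]} : u^{\omega_{n-1}}\bigr)^{[p]} = \fp^{[p]},\]
and the already-established $n = 1$ case then forces $a \in \fp$.

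The main obstacle is extracting the colon formula and the resulting primariness of $\fp^{[p]}$ cleanly from Kunz's theorem; once these are in place the induction is purely formal.
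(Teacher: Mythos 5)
Your proof is correct, and it relies on exactly the ingredients one would expect to underlie the cited reference: Kunz's theorem that Frobenius is flat on a regular ring, the consequent colon formula $(I^{[p]}:s^p)=(I:s)^{[p]}$, and the resulting fact that $\fp^{[p]}$ (and, inductively, $\fp^{[p^n]}$) is $\fp$-primary. The paper itself gives no argument for this lemma; it simply refers to \cite[Proposition 2.9]{Fpurhastest} and remarks that the local proof carries over verbatim to the non-local setting, so a direct comparison is not possible here — but your self-contained induction is precisely the kind of argument that justifies such a remark, since it nowhere uses that $S$ is local. One small wording issue: the case $n=0$ is not ``vacuous''; it is the (trivially true) base statement $(\fp^{[1]}:u^0)=(\fp:1)=\fp$, which your induction does actually invoke. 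Everything else — the derivation of the colon formula by applying $-\otimes_{S,f}S$ to the injection $S/(I:s)\xrightarrow{\cdot s}S/I$, the deduction that $\fp^{[p]}$ is $\fp$-primary, the splitting $u^{\omega_n}=u\,(u^{\omega_{n-1}})^p$, and the two inclusions in the inductive step — checks out cleanly.
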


\begin{proof} This was proved in the case where $S$ is a regular local ring in
\cite[Proposition 2.9]{Fpurhastest}, and the same proof works in
this more general situation.
\end{proof}

We shall use the construction in the next theorem more than once.

\begin{prop}\label{jun.1} Suppose that $R$ is an integral domain and a homomorphic image
$S/\fa$ of a regular ring $S$ of characteristic $p$, where $\fa$ is
a non-zero prime ideal of $S$.

Let $\fq \in \Spec(S)$ be such that $\fq \supseteq \fa$ and
$R_{\fq/\fa}$ is $F$-pure. Then there exist $u_1, \ldots, u_l \in
(\fa^{[p]} : \fa) \setminus \fq^{[p]}$ such that $(\fa^{[p]} : \fa)
= Su_1 + \cdots + Su_l + \fa^{[p]}$.
\end{prop}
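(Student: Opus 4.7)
The plan is to reduce to the local case by localising $S$ at $\fq$ and then invoke Fedder's criterion, which characterises $F$-purity of a quotient of a regular local ring by a non-containment condition on the colon ideal $(\fa^{[p]} : \fa)$.

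First, set $T := S_{\fq}$, $\fm := \fq T$ and $\fb := \fa T$. Since $S$ is regular, $T$ is a regular local ring of characteristic $p$, and we have a natural identification $T/\fb \cong R_{\fq/\fa}$, which is $F$-pure by hypothesis. Fedder's criterion (whose standard proof depends only on the flatness of Frobenius for a regular local ring together with Matlis duality, and in particular does not invoke the Gamma construction) then yields
\[
(\fb^{[p]} : \fb) \not\subseteq \fm^{[p]}.
\]
Because $\fa$ is finitely generated and $S \to T$ is flat, colon ideals commute with localisation, so
\[
(\fa^{[p]} : \fa) T = ((\fa T)^{[p]} : \fa T) = (\fb^{[p]} : \fb),
\]
while clearly $\fm^{[p]} = \fq^{[p]} T$. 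If $(\fa^{[p]} : \fa)$ were contained in $\fq^{[p]}$, the inclusion would pass to $T$, contradicting Fedder's criterion. Hence there exists some $u \in (\fa^{[p]} : \fa) \setminus \fq^{[p]}$.

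For the remaining step, use the Noetherianity of $S$ to pick generators $v_1, \ldots, v_k$ of the ideal $(\fa^{[p]} : \fa)$. For each $i$, set $u_i := v_i$ when $v_i \notin \fq^{[p]}$, and $u_i := v_i + u$ otherwise; in the latter case $u_i \notin \fq^{[p]}$ because $u \notin \fq^{[p]}$ while $v_i \in \fq^{[p]}$. Taking additionally $u_{k+1} := u$, each $u_j$ lies in $(\fa^{[p]} : \fa) \setminus \fq^{[p]}$ and collectively they generate $(\fa^{[p]} : \fa)$ as an ideal of $S$, so a fortiori
\[
(\fa^{[p]} : \fa) = Su_1 + \cdots + Su_{k+1} + \fa^{[p]}.
\]

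The only step of substance is Fedder's criterion; everything else is bookkeeping using finite generation, flatness, and the simple trick of replacing $v_i$ by $v_i + u$ to push it out of $\fq^{[p]}$. The hypotheses that $\fa$ is prime and non-zero play no essential role in this particular argument, but are inherited from the ambient setting of the paper.
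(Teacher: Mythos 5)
Your proof is correct and takes a genuinely different route from the paper's. The paper constructs the elements $u_1, u_2, \ldots$ one at a time by a Noetherian induction: at each stage it assumes the elements produced so far do not yet generate $T := (\fa^{[p]}:\fa)/\fa^{[p]}$ and then invokes ideal avoidance (in the form of Kaplansky's Theorem 81) to pick $u_{j+1} \in (\fa^{[p]}:\fa)$ lying outside both $\fq^{[p]}$ and $\fa^{[p]}+Su_1+\cdots+Su_j$, with termination guaranteed by the ascending chain condition. You instead take an arbitrary finite generating set $v_1,\ldots,v_k$ of $(\fa^{[p]}:\fa)$, extract a single $u \in (\fa^{[p]}:\fa)\setminus\fq^{[p]}$ from Fedder's criterion, and then repair the generators by replacing each $v_i \in \fq^{[p]}$ with $v_i+u$; appending $u$ itself makes the replacement reversible, so the $u_j$ still generate. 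Both proofs hinge on exactly the same application of Fedder's Theorem \cite[Theorem 1.12]{Fedde83} after localising at $\fq$. Your version buys a shorter, chain-condition-free argument and avoids the appeal to ideal avoidance; the paper's version naturally produces a strictly increasing chain of subsums $Su_1+\cdots+Su_j$, which is a mildly stronger irredundancy property that the Proposition does not actually require. Your closing remark that primality and non-triviality of $\fa$ are not used is also accurate; they are carried along only because of the surrounding setting.
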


\begin{proof} Suppose, inductively, that $j \in \nn$ and
$u_1, \ldots, u_j \in (\fa^{[p]}:\fa) \setminus \fq^{[p]}$ have been
constructed so that $(\sum_{i=1}^kSu_i)_{k=0}^j$ form a strictly
ascending chain of ideals. This is certainly true when $j = 0$.

If the natural images of $u_1, \ldots, u_j$ in $T :=
(\fa^{[p]}:\fa)/\fa^{[p]}$ generate $T$, then we have achieved our
aim. We therefore suppose that this is not the case, so that
$$\fa^{[p]} + Su_1 + \cdots + Su_j \subset (\fa^{[p]}:
\fa).$$ (The symbol `$\subset$' is reserved to denote strict
inclusion.)

Since $R_{\fq/\fa} \cong S_{\fq}/\fa S_{\fq}$ is $F$-pure, it
follows from Fedder's Theorem \cite[Theorem 1.12]{Fedde83} that $
((\fa S_{\fq})^{[p]} : \fa S_{\fq}) \not\subseteq (\fq
S_{\fq})^{[p]},$ so that $(\fa^{[p]}: \fa) \not\subseteq \fq^{[p]}$.
Therefore, by ideal avoidance (in the form presented in
\cite[Theorem 81]{IK}), there exists
$$
u_{j+1} \in (\fa^{[p]}:\fa) \setminus \fq^{[p]} \cup \left(
\fa^{[p]} + Su_1 + \cdots + Su_j\right).
$$
Note that $Su_1 + \cdots + Su_j \subset Su_1 + \cdots + Su_j +
Su_{j+1}$. This completes the inductive step.

Since $R$ is Noetherian, this inductive argument yields $u_1,
\ldots, u_l \in (\fa^{[p]}:\fa) \setminus \fq^{[p]}$ whose natural
images in $T$ generate $T$.
\end{proof}

\section{\it Homomorphic images of regular local rings of characteristic $p$}
\label{hrlr} If the local ring $(R,\fm)$ has a test element for
ideals, then $R$ must satisfy condition $(R_0)$, by Lemma
\ref{in.3}. We have seen in Corollary \ref{pl.6} that there exists a
big test element for $R$ if we can extend the $R$-module structure
on $E := E_R(R/\fm)$ to a structure as a left $R[x,f]$-module in a
sufficiently non-trivial way so that $(0:_R\Gamma_x(E))$ has
positive height. Now $E$ has a natural structure as a module over
the completion $\widehat{R}$ of $R$, and a structure as a left
$R[x,f]$-module on $E$ induces, in a unique way, a structure as a
left $\widehat{R}[x,f]$-module on it extending the $R[x,f]$-module
structure. As an $\widehat{R}$-module, $E \cong
E_{\widehat{R}}(\widehat{R}/\fm \widehat{R})$, and I. S. Cohen's
Structure Theorem for complete local rings ensures that
$\widehat{R}$ is a homomorphic image of a complete regular local
ring of characteristic $p$.

With these considerations in mind, we are going, in this section, to
consider possible left $R[x,f]$-module structures on $E$ when $R$ is
a homomorphic image of a regular local ring of characteristic $p$.
Some preparatory results were provided in \S \ref{hrr}. We now
recall some other theory relevant to this situation that was
developed in \cite[\S 2]{Fpurhastest}.

\begin{disc} \label{hrlr.1} Throughout this section, we shall assume
that $R = S/\fa$, where $S$ is a regular local ring of
characteristic $p$ and $\fa$ is a proper ideal of $S$.

We shall denote the set of prime ideals of $S$ that contain $\fa$ by
$\Var(\fa)$. For $s \in S$, we shall denote the natural image of $s$
in $R$ by $\overline{s}$. We shall only assume that $S$ is complete
when this is explicitly stated. We shall use $\fn$ to denote the
maximal ideal of $S$, so that $\fm := \fn/\fa$ is the maximal ideal
of $R$. Set $E := E_S(S/\fn)$, and note that $(0:_E\fa) \cong
E_R(R/\fm)$ as $R$-modules. We shall interpret $(0:_E\fa)$ as
$E_R(R/\fm)$.

Observe that $R[x,f]$ is a homomorphic image of $S[x,f]$ under a
homomorphism that extends the natural surjective homomorphism from
$S$ to $R$ and maps the indeterminate $x$ to $x$. Let $y$ be a new
indeterminate. Since $E \cong H^{\dim S}_{\fn}(S)$, the $S$-module
$E$ has a natural structure as a left $S[y,f]$-module. For any
element $u \in S$, it is easy to see that we can endow $E$ with a
structure of left $S[x,f]$-module under which $xe = uye$ for all $e
\in E$. A recurring theme of this section is the idea of trying to
choose a $u$ as above in such a way that $(0:_E\fa) = E_R(R/\fm)$ is
an $S[x,f]$-submodule of $E$, and so becomes a left $R[x,f]$-module.

We now remind the reader of some results from \cite[\S
2]{Fpurhastest} about this situation.
\end{disc}

\begin{rmds}
\label{jun.5} Let the situation and notation be as in \ref{hrlr.1}.
\begin{enumerate}
\item Let $u
\in S$. Put the left $S[x,f]$-module structure on $E$ for which $xe
= uye$ for all $e \in E$. Then $(0:_E\fa)$ is an $S[x,f]$-submodule
of $E$ if and only if $u \in (\fa^{[p]} : \fa)$. See \cite[Lemma
2.4]{Fpurhastest}.

When this condition is satisfied, $E_R(R/\fm)$ is a left
$R[x,f]$-module with $xe = uye$ for all $e \in (0:_E\fa) =
E_R(R/\fm)$ and $re = se$ for any $r \in R$ and $s \in S$ for which
$\overline{s} = r$. The words `use $u$ to furnish $E_R(R/\fm)$ with
a structure as a left $R[x,f]$-module' will always refer to this
construction.
\item Suppose
that $S$ is complete, and that $(0:_E\fa)$ has a left
$R[x,f]$-module structure that extends its $R$-module structure.
Then there exists $u \in (\fa^{[p]} : \fa)$ such that $xe = uye$ for
all $e \in (0:_E\fa)$. See M. Blickle \cite[Proposition
3.36]{Blickle} (or \cite[Lemma 2.5]{Fpurhastest} for a short,
self-contained proof).
\item If the local ring $R = S/\fa$ is Gorenstein, then
$(\fa^{[p]}:\fa)/\fa^{[p]}$ is a cyclic $S$-module. See
\cite[Corollary 2.11]{Fpurhastest}.
\item Let $\fp
\in \Spec (S)$ and $u \in (\fp^{[p]} : \fp)$. Bearing in mind part
(i) above, put the left $S[x,f]$-module structure on $E$ for which
$xe = uye$ for all $e \in E$, so that $(0:_E\fp)$ is an
$S[x,f]$-submodule of $E$. If $u \not\in \fp^{[p]}$, then
$(0:_E\fp)$ is not $x$-torsion. See \cite[Proposition
2.9]{Fpurhastest}.
\end{enumerate}
\end{rmds}

\begin{lem}\label{fp.210} Suppose that $R$ is excellent and
satisfies the condition $(R_0)$. Let $\fp \in \Spec (R)$ be such
that $R_{\fp}$ is regular. Then there exists $c \in
R^{\circ}\setminus \fp$ such that $R_c$ is regular.
\end{lem}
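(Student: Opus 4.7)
The plan is to exploit the fact that excellent rings have open regular locus, together with the condition $(R_0)$, and finish via prime avoidance.

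First, because $R$ is excellent, the regular locus
\[
\Reg(R) := \{\fq \in \Spec(R) : R_\fq \text{ is regular}\}
\]
is an open subset of $\Spec(R)$ (this is part of the definition of excellence, via the J-2 property). Write $\Reg(R) = \Spec(R) \setminus V(I)$ for some ideal $I \subseteq R$; equivalently, $\fq \in \Reg(R)$ if and only if $I \not\subseteq \fq$.

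Next, use $(R_0)$: for each minimal prime $\fp_i$ of $R$ (say the minimal primes are $\fp_1, \ldots, \fp_h$), the localization $R_{\fp_i}$ is a field, hence regular, so $\fp_i \in \Reg(R)$ and $I \not\subseteq \fp_i$. The hypothesis on $\fp$ likewise gives $I \not\subseteq \fp$. Therefore none of the finitely many primes $\fp, \fp_1, \ldots, \fp_h$ contains $I$, hence none contains $\sqrt{I}$.

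By prime avoidance applied to $\sqrt{I}$ and the finite family $\{\fp, \fp_1, \ldots, \fp_h\}$, I choose an element
\[
c \in \sqrt{I} \setminus \left(\fp \cup \fp_1 \cup \cdots \cup \fp_h\right).
\]
Then $c \in R^\circ$ (since $c$ avoids every minimal prime) and $c \notin \fp$, as required. It remains to check that $R_c$ is regular. Any prime of $R_c$ has the form $\fq R_c$ for some $\fq \in \Spec(R)$ with $c \notin \fq$; since $c \in \sqrt{I}$, if $I \subseteq \fq$ then $c \in \sqrt{I} \subseteq \fq$, a contradiction. Hence $I \not\subseteq \fq$, i.e.\ $\fq \in \Reg(R)$, so $(R_c)_{\fq R_c} \cong R_\fq$ is regular. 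Thus $R_c$ is regular, completing the proof.

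There is no real obstacle here; the only non-trivial ingredient is the openness of the regular locus for excellent rings, which is standard. Everything else reduces to a clean application of prime avoidance using $(R_0)$ to ensure the minimal primes lie in $\Reg(R)$.
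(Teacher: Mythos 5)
Your proof is correct and follows essentially the same route as the paper: both use the openness of the regular locus guaranteed by excellence to obtain a (radical) ideal cutting out the non-regular locus, then invoke $(R_0)$ to ensure the minimal primes lie in the regular locus, and finish by prime avoidance against $\fp$ together with the minimal primes. The only cosmetic difference is that the paper works directly with a radical ideal $\fd$ defining the non-regular locus, while you pass from $I$ to $\sqrt{I}$; the content is identical.
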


\begin{proof} Let $\Min(R)$ denote the set of minimal prime ideals
of $R$. By hypothesis, $R_{\fq}$ is a field for each $\fq \in
\Min(R)$. Since $R$ is excellent, there is a radical ideal $\fd$ of
$R$ such that
$
\{ \fq \in \Spec (R) : R_{\fq} \text{~is not regular}\} =  \{ \fq
\in \Spec (R) : \fq \supseteq \fd\}.
$
By prime avoidance, there exists ${\textstyle c \in \fd \setminus
\fp \bigcup \left(\bigcup_{\fq \in \Min(R)}\fq\right)},$ and $R_c$
is regular.
\end{proof}

\begin{lem}
\label{feb.2} Suppose that $(R,\fm)$ is local and excellent.
\begin{enumerate}
\item If $R$ satisfies condition $(R_0)$, then so also does $\widehat{R}$.
\item If $c \in R$ is such that $R_c$ is regular, then
$\widehat{R}_c$ is regular.
\item If $c \in R$ is such that $R_c$ is Gorenstein, then
$\widehat{R}_c$ is Gorenstein.
\end{enumerate}
\end{lem}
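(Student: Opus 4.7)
The plan for all three parts is to exploit the fact that, since $R$ is excellent, the completion map $R \to \widehat{R}$ is a regular morphism: for every $\fp \in \Spec(R)$, the fiber $\widehat{R}\otimes_R \kappa(\fp)$ is a (geometrically) regular $\kappa(\fp)$-algebra. Combined with the faithful flatness of $R \to \widehat{R}$, this will reduce each claim to a pointwise check at a single prime of $\widehat{R}$ together with an application of standard ascent results.

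For part (i), let $\fq$ be a minimal prime of $\widehat{R}$. By faithful flatness, going-down holds between $R$ and $\widehat{R}$, so $\fp := \fq \cap R$ is a minimal prime of $R$. Hypothesis $(R_0)$ forces $\fp R_{\fp} = 0$, whence $\fp\widehat{R}_{\fq} = 0$. Consequently $\widehat{R}_{\fq}$ coincides with $\widehat{R}_{\fq}/\fp\widehat{R}_{\fq}$, which is a localization of the fiber $\widehat{R}\otimes_R \kappa(\fp)$; by excellence this fiber is regular, so $\widehat{R}_{\fq}$ is a regular local ring. Since $\fq$ is minimal, $\widehat{R}_{\fq}$ has Krull dimension $0$, and a zero-dimensional regular local ring is a field. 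As this holds for every minimal prime of $\widehat{R}$, the ring $\widehat{R}$ satisfies condition $(R_0)$.

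For parts (ii) and (iii), fix $\fq \in \Spec(\widehat{R})$ with $c \notin \fq$ and set $\fp := \fq \cap R$; then $c \notin \fp$, so $R_{\fp}$ is a localization of $R_c$, and therefore inherits regularity in case (ii), respectively Gorensteinness in case (iii). The induced map $R_{\fp} \to \widehat{R}_{\fq}$ is a flat local homomorphism of Noetherian local rings, and its closed fiber $\widehat{R}_{\fq}/\fp\widehat{R}_{\fq}$ is a localization of the regular $\kappa(\fp)$-algebra $\widehat{R}\otimes_R \kappa(\fp)$, hence is itself regular, and in particular Gorenstein. I then invoke the standard ascent theorems (see Matsumura \cite{HM}): for a flat local homomorphism $A \to B$ of Noetherian local rings, if both $A$ and the closed fiber $B/\fm_A B$ are regular (respectively Gorenstein), then so is $B$. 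Applied to $R_{\fp} \to \widehat{R}_{\fq}$, this gives regularity (respectively Gorensteinness) of $\widehat{R}_{\fq}$ for every $\fq$ with $c \notin \fq$, and hence $\widehat{R}_c$ is regular (respectively Gorenstein).

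The only mildly delicate point is the identification of the closed fiber of $R_{\fp} \to \widehat{R}_{\fq}$ as a localization of $\widehat{R}\otimes_R \kappa(\fp)$, together with the analogous computation $\widehat{R}_{\fq} = \widehat{R}_{\fq}/\fp\widehat{R}_{\fq}$ needed in (i); once these are in place, the conclusions follow from well-known properties of excellent rings and of flat local extensions with regular (or Gorenstein) base and regular (or Gorenstein) closed fibre.
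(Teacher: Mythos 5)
Your proof is correct and takes essentially the same approach as the paper: the paper simply notes that all formal fibres of $R \to \widehat{R}$ are regular (by excellence) and that the fibres of $R_c \to \widehat{R}_c$ are rings of fractions of these, then leaves the ascent arguments to the reader, while you have spelled out those ascent steps (going-down for (i), flat local ascent of regularity and Gorensteinness for (ii) and (iii)) in full detail.
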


\begin{proof} These claims are all easy consequences of the fact
that all the `formal' fibre rings of the flat local inclusion
homomorphism $R \lra \widehat{R}$ are regular. Note that, for $c \in
R$, the fibre rings of the flat ring homomorphism $R_c \lra
\widehat{R}_c$ induced by inclusion are rings of fractions of the
formal fibres of $R$, and so are regular.
\end{proof}

\begin{thm}
\label{jan.17} Suppose that $(R,\fm)$ is excellent and local, and
that $R$ satisfies condition $(R_0)$. Then, for each $c \in
R^{\circ}$ for which $R_c$ is regular, some power of $c$ is a big
test element for $R$.
\end{thm}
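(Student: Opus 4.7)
My plan is to reduce to the case where $R$ is a complete local domain, represent $R=S/\fa$ via Cohen's structure theorem with $S$ a complete regular local ring, and endow $E_R(R/\fm)$ with a left $R[x,f]$-module structure whose $x$-torsion is annihilated by a power of $c$; Corollary \ref{pl.6} then delivers the big test element.

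For the reduction, since $R_c$ is regular it is in particular $F$-regular (\ref{tcjalg.1}(iv)), so Proposition \ref{tc.postant}(iii) allows me to replace $R$ by each quotient $R/\fp_i$ by a minimal prime; each such quotient is an excellent local domain, and by Proposition \ref{tc.postant}(i) the image of $c$ still yields a regular localization. Proposition \ref{jul.4} next lets me pass to $\widehat R$, which by Lemma \ref{feb.2} is excellent, satisfies $(R_0)$, and has $\widehat R_c$ regular; a further application of Proposition \ref{tc.postant} reduces to the case where $R$ itself is a complete local domain.

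By Cohen's structure theorem, write $R = S/\fa$ with $S$ a complete regular local ring (maximal ideal $\fn$) and $\fa$ a prime ideal of $S$. Fix a lift $c_0 \in S \setminus \fa$ of $c$, and choose a prime $\fq$ of $S$ with $\fa \subseteq \fq$ and $c_0 \notin \fq$; then $R_{\fq/\fa}$ is a localization of $R_c$, hence regular and in particular $F$-pure. Proposition \ref{jun.1} then produces $u_1,\dots,u_l \in (\fa^{[p]}:\fa) \setminus \fq^{[p]}$ whose images generate $(\fa^{[p]}:\fa)/\fa^{[p]}$. For each $i$, use \ref{jun.5}(i) to obtain an $R[x,f]$-module structure $E^{(i)}$ on $E := E_R(R/\fm) = (0:_{E_S(S/\fn)}\fa)$ via $xe = u_i y e$; by \ref{jun.5}(iv), none of the $E^{(i)}$ is $x$-torsion, so each $(0:_R \Gamma_x(E^{(i)}))$ is a non-zero ideal of the domain $R$ and hence has positive height.

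The technical heart of the argument---and the main obstacle---is to show that, for an appropriate combined structure built from the $E^{(i)}$ (for example the direct sum $\bigoplus_i E^{(i)}$, which remains Artinian over $R$, or a graded product of graded companions as in Examples \ref{jan.10} and \ref{jan.11}), the $R$-annihilator of $\Gamma_x$ contains a specific power of $c$. The ingredients are: Lemma \ref{jun.2} applied at $\fa$, yielding $(\fa^{[p^n]} : u_i^{\omega_n}) = \fa$ for every $n$ and $i$; the flatness of Frobenius on the regular ring $S$, which translates $x$-annihilation into Frobenius-closure conditions on $S$-annihilators of elements of $E$; Matlis duality over the complete ring $R$ (or equivalently $S$-Matlis duality restricted to $(0:_{E_S(S/\fn)}\fa)$); and the generation of $(\fa^{[p]}:\fa)/\fa^{[p]}$ by the $u_i$ together with $u_i \not\in \fq^{[p]}$, which controls the primary decomposition of the $S$-annihilator of $\Gamma_x$. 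Varying $\fq$ over primes above $\fa$ not containing $c_0$ and combining the resulting structures then forces a power of $c_0$ into the $S$-annihilator of the combined $\Gamma_x$, equivalently a power of $c$ into the corresponding $R$-annihilator.

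Once that annihilation is established, Corollary \ref{jan.26} applies to the Artinian combined structure to give $\height(0:_R\Delta(\cdot))>0$, and the \emph{consequently} estimate of Theorem \ref{jan.8} upgrades the power $c^N$ annihilating $\Gamma_x$ to a power $c^{N+1}$ lying in $(0:_R\Delta(\cdot))$. A final appeal to Corollary \ref{pl.6} (for a single-$u$ structure on $E_R(R/\fm)$) or to Theorem \ref{pl.5b} (for the combined structure, feeding it in as the left $R[x,f]$-module $H$) concludes that $c^{N+1} \in R^{\circ}$ is a big test element for $R$, completing the proof.
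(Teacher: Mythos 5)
Your reduction to a complete local domain $R = S/\fa$ (via Lemma \ref{feb.2}, Propositions \ref{tc.postant} and \ref{jul.4}) and the subsequent appeal to Cohen's theorem, Proposition \ref{jun.1} and \ref{jun.5} match the paper's opening moves. But what you label ``the technical heart'' and ``the main obstacle'' is precisely where the argument is missing, and the plan you sketch for overcoming it has two defects. First, you do not prove, and it is not clear how one would prove, that a power of $c$ lies in $(0:_R\Gamma_x)$ of the combined structure: by Proposition \ref{if.11}, the $S$-annihilator $\ft$ of $\Gamma_x(E)$ for a single-$u$ structure is the stable value of $(Su^{\omega_n})^{[1/p^n]}+\fa$, and there is no obvious reason a lift $c_0$ of $c$ should belong to $\sqrt{\ft}$. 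Second, even granting $c^N \in (0:_R\Gamma_x(H))$ for an Artinian $H$, the ``consequently'' clause of Theorem \ref{jan.8} does not deliver $c^{N+1} \in (0:_R\Delta(H))$. That theorem gives $\sqrt{(0:_R\Delta(H))} = (0:_R\Delta(G)) \cap \sqrt{(0:_R\Gamma_x(H))}$ with $G = H/\Gamma_x(H)$, so you would also need $c \in (0:_R\Delta(G))$, a separate condition your plan never addresses. (Cautionary instance: if $R$ is $F$-pure, $\Gamma_x(E) = 0$, so every element of $R^\circ$ kills $\Gamma_x(E)$, yet $(0:_R\Delta(E))$ can be a proper ideal whose radical excludes most of $R^\circ$; annihilating $\Gamma_x$ says nothing by itself about $\Delta$.)

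The paper's actual core step is quite different and absent from your proposal. Fixing $\fp \in \Spec(R)$ with $R_\fp$ regular (hence Gorenstein), the paper uses \ref{jun.5}(iii) to select a \emph{single} $u_i$ whose image generates the cyclic $S_\fq$-module $\left((\fa S_{\fq})^{[p]} : \fa S_{\fq}\right)/(\fa S_{\fq})^{[p]}$, endows $E_R(R/\fm)$ with the corresponding structure, and shows that $\fd_0 := (0:_R\Delta(E_R(R/\fm)))$, which has positive height by Matlis duality and Corollary \ref{jan.26}, satisfies $\fd_0 \not\subseteq \fp$. The mechanism is that $(0:_{E_{R_\fp}(k(\fp))}\fd_0 R_\fp)$ is an $R_\fp[x,f]$-submodule of $E_{R_\fp}(k(\fp))$, and the latter is a \emph{simple} left $R_\fp[x,f]$-module by \cite[Corollary 2.13]{Fpurhastest} (ultimately Smith's theorem \ref{tcjalg.1}(v)(b)); Matlis duality then forces $\fd_0 R_\fp$ to be $0$ or $R_\fp$, and positive height rules out $0$. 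Since $\fd_0$ is contained in the big test ideal $(0:_R\Delta(R[x,f]\otimes_R E_R(R/\fm)))$ by Theorem \ref{pl.5c}, that ideal escapes every $\fp$ with $R_\fp$ regular; if $R_c$ is regular, any prime containing the big test ideal must therefore contain $c$, so $c$ lies in the radical and some power of $c$ is a big test element. This non-containment argument, driven by the simplicity of the localized injective hull as an $R_\fp[x,f]$-module, is the ingredient your plan is missing; you should take that route rather than trying to force a power of $c$ into $(0:_R\Gamma_x)$.
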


\begin{proof} By Lemma \ref{feb.2}, the completion $(\widehat{R},
\widehat{\fm})$ of $R$ satisfies condition $(R_0)$, and
$\widehat{R}_c$ is regular. Note also that $c \in
\widehat{R}^{\circ}$. Complete local rings are excellent. It
therefore follows from Propositions \ref{tc.postant} and \ref{jul.4}
that we can assume that $R$ is a complete local domain.

We appeal to I. S. Cohen's Structure Theorem for complete local
rings containing a subfield and write $R = S/\fa$, where $S$ is a
complete regular local ring of characteristic $p$ and $\fa$ is a
non-zero prime ideal of $S$.

Let $\fp \in \Spec(R)$ be such that $R_{\fp}$ is regular. Let $\fq
\in \Var(\fa)$ be such that $\fq/\fa = \fp$. Set $T :=
(\fa^{[p]}:\fa)/\fa^{[p]}$. Apply Proposition \ref{jun.1} to find
$u_1, \ldots, u_l \in (\fa^{[p]}:\fa)\setminus \fq^{[p]}$ whose
natural images in $T$ generate $T$.

Now $ \left((\fa S_{\fq})^{[p]} : \fa S_{\fq}\right) = (\fa
S_{\fq})^{[p]} + (u_1/1)S_{\fq} + \cdots + (u_{l}/1)S_{\fq}.$ Since
$S_{\fq}/\fa S_{\fq} \cong R_{\fp}$ is a Gorenstein local ring, it
follows from \ref{jun.5}(iii) that $\left((\fa S_{\fq})^{[p]} : \fa
S_{\fq}\right)/(\fa S_{\fq})^{[p]}$ is cyclic over $S_{\fq}$;
therefore, there exists $i \in \{1,\ldots,l\}$ such that
$$
\left((\fa S_{\fq})^{[p]} : \fa S_{\fq}\right) =  (\fa
S_{\fq})^{[p]} + (u_i/1)S_{\fq}.
$$
Use $u_i$ to furnish $E_R(R/\fm) = (0:_E\fa)$ with a structure as a
left $R[x,f]$-module. Since $u_i \not\in \fa^{[p]}$, it follows from
\ref{jun.5}(iv) that $(0:_E\fa)$ is not $x$-torsion. Hence
$\Gamma_x(0:_E\fa) \neq (0:_E\fa)$ and $\height
(0:_R\Gamma_x(0:_E\fa)) > 0$ by Matlis duality (see \cite[p.\
154]{SV}, for example). Now use Corollary \ref{pl.6} to see that the
ideal $\fd_0 := (0:_R\Delta(E_R(R/\fm)))$ has positive height and
all its members in $R^{\circ}$ are big test elements for $R$.

Our next aim is to show that $\fd_0 \not\subseteq \fp$. Recall from
above that $u_i$ was chosen so that
$$
\left((\fa S_{\fq})^{[p]} : \fa S_{\fq}\right) =  (\fa
S_{\fq})^{[p]} + (u_i/1)S_{\fq}.
$$
Let $G$ denote the injective envelope, over $R_{\fp}$, of the simple
$R_{\fp}$-module, and use $u_i/1$ in conjunction with the
isomorphism $R_{\fp} \cong S_{\fq}/\fa S_{\fq}$ to put a left
$R_{\fp}[x,f]$-module structure on $G$. Let $\fk_0$ be the ideal of
$S$ that contains $\fa$ and is such that $\fd_0 = \fk_0/\fa$. Since
$R$ is complete, it follows from Matlis duality that
$$
\Delta(E_R(R/\fm)) = (0:_{E_R(R/\fm)}\fd_0) = (0:_E\fk_0),
$$
and since $\Delta(E_R(R/\fm))$ is an $R[x,f]$-submodule of
$E_R(R/\fm)$, we deduce from \ref{jun.5}(i) that $u_i \in
\left((\fk_0)^{[p]}:\fk_0\right)$. Consequently, in $S_{\fq}$, we
have $ u_i/1 \in \left((\fk_0S_{\fq})^{[p]}:\fk_0S_{\fq}\right), $
and so $(0:_G \fk_0S_{\fq}/\fa S_{\fq}) = (0:_G\fd_0R_{\fp})$ is an
$R_{\fp}[x,f]$-submodule of $G$, again by \ref{jun.5}(i). Now $G$ is
simple as an $R_{\fp}[x,f]$-module, by \cite[Corollary
2.13]{Fpurhastest}. (Note that the appeal to \cite[Theorem
2.6]{Smi97} at the end of the proof of that corollary can be
replaced by use of \ref{tcjalg.1}(v)(b), which can be proved without
use of test elements.) Therefore, since a module over a (not
necessarily complete) local ring has the same annihilator as its
Matlis dual, $\fd_0R_{\fp}$ must be $0$ or $R_{\fp}$. Since $\height
\fd_0R_{\fp} \geq 1$, we must have $\fd_0R_{\fp} = R_{\fp}$.
Therefore $\fd_0 \not\subseteq \fp$.

Since $R$ has a big test element, $\height
(0:_R\Delta(R[x,f]\otimes_RE_R(R/\fm))) > 0$, and also $\fd_0
\subseteq (0:_R\Delta(R[x,f]\otimes_RE_R(R/\fm)))$, by Theorem
\ref{pl.5c}. Therefore
$$(0:_R\Delta(R[x,f]\otimes_RE_R(R/\fm))) \not\subseteq \fp.$$
We have therefore shown that
$(0:_R\Delta(R[x,f]\otimes_RE_R(R/\fm)))$ is not contained in any
$\fp' \in \Spec (R)$ for which $R_{\fp'}$ is regular. It follows
that, if $c \in R^{\circ} = R \setminus \{0\}$ is such that $R_c$ is
regular, then $c \in
\sqrt{(0:_R\Delta(R[x,f]\otimes_RE_R(R/\fm)))}$, so that some power
of $c$ is a big test element for $R$.
\end{proof}

\begin{cor}
\label{jan.18} Suppose that $(R,\fm)$ is excellent and local, and
that $R$ satisfies condition $(R_0)$. Then every big test element
for $R$ is automatically a big test element for $\widehat{R}$.
\end{cor}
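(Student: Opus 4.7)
The corollary will be reduced to Lemma~\ref{jan.16}(iii), whose hypothesis is the existence of a single element of $R^{\circ}$ that is a big test element for $\widehat{R}$; once such an element is produced, the conclusion that \emph{every} big test element for $R$ is one for $\widehat{R}$ is immediate from that lemma. Such an element will be produced by running Theorem~\ref{jan.17} inside $\widehat{R}$ itself.

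The first step is to find $c \in R^{\circ}$ with $R_c$ regular. Since $R$ is excellent, its non-regular locus is closed in $\Spec(R)$, say equal to $V(\fd)$ for a radical ideal $\fd$. Condition $(R_0)$ makes each $R_{\fp_i}$ a field, hence regular, at every minimal prime $\fp_i$ of $R$, so $\fd$ is contained in none of them; prime avoidance then supplies $c \in \fd \setminus \bigcup_i \fp_i$. By construction $c \in R^{\circ}$ and $R_c$ is regular.

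The second step is to transfer this $c$ to $\widehat{R}$. Complete local rings are excellent, so $\widehat{R}$ is excellent and local; by Lemma~\ref{feb.2}, $\widehat{R}$ satisfies $(R_0)$ and $\widehat{R}_c$ is regular. The key verification is that $c \in \widehat{R}^{\circ}$: for any minimal prime $\fQ$ of $\widehat{R}$, condition $(R_0)$ for $\widehat{R}$ makes $\widehat{R}_{\fQ}$ a field, and the faithfully flat local ring homomorphism $R_{\fp} \lra \widehat{R}_{\fQ}$, with $\fp := \fQ \cap R$, then forces $\dim R_{\fp} = 0$. Hence $\fp$ is a minimal prime of $R$, which $c$ avoids, so $c \notin \fQ$.

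With this in hand, Theorem~\ref{jan.17} applied to the excellent local ring $\widehat{R}$ yields $N \in \N$ such that $c^N$ is a big test element for $\widehat{R}$. Since $R^{\circ}$ is closed under taking powers, $c^N \in R^{\circ}$, and Lemma~\ref{jan.16}(iii) applied with this $c^N$ closes the argument. The only step requiring any real thought is the verification $c \in \widehat{R}^{\circ}$; this is precisely where $(R_0)$ on $\widehat{R}$ is essential, since it rules out a minimal prime of $\widehat{R}$ contracting to a non-minimal prime of $R$.
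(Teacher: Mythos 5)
Your argument is correct and follows essentially the same route as the paper: produce $c \in R^{\circ}$ with $R_c$ regular (this is Lemma~\ref{fp.210} applied at a minimal prime), pass to $\widehat{R}$ via Lemma~\ref{feb.2}, apply Theorem~\ref{jan.17} to $\widehat{R}$ to get a power $c^N \in R^{\circ}$ that is a big test element for $\widehat{R}$, and then invoke Lemma~\ref{jan.16}(iii). One small over-claim: condition $(R_0)$ on $\widehat{R}$ is \emph{not} essential for showing $c \in \widehat{R}^{\circ}$ --- for any flat local extension, minimal primes contract to minimal primes (your own dimension argument with $R_{\fp} \lra \widehat{R}_{\fQ}$ faithfully flat and $\dim \widehat{R}_{\fQ} = 0$ already works without knowing $\widehat{R}_{\fQ}$ is a field), so $R^{\circ} \subseteq \widehat{R}^{\circ}$ holds generally, as the paper notes inside the proof of Lemma~\ref{jan.16}.
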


\begin{proof} Let $c \in R^{\circ}$ be such that $R_c$ is regular. We deduce from
Lemma \ref{feb.2} that $\widehat{R}$ satisfies $(R_0)$ and that
$\widehat{R}_c$ is regular. By Theorem \ref{jan.17}, there exists $m
\in \N$ such that $c^m$ is a big test element for both $R$ and
$\widehat{R}$. Therefore, by Lemma \ref{jan.16}(iii), every big test
element for $R$ is automatically a big test element for
$\widehat{R}$.
\end{proof}

\begin{cor}
\label{jan.18a} Suppose that $R$ is excellent (but not necessarily
local). Then a big test element for $R$ is automatically a locally
stable big test element for $R$ and a completely stable big test
element for $R$.
\end{cor}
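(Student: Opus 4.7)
The locally stable half is essentially already proved: Corollary \ref{pl.7} establishes that every big test element for $R$ is automatically locally stable, without any excellence hypothesis. So the only real work is in the completely stable half, and the strategy is to reduce that statement to Corollary \ref{jan.18} applied at each prime.

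The plan is as follows. Fix a big test element $c$ for $R$ and a prime ideal $\fp \in \Spec(R)$; the goal is to show that $c/1$ is a big test element for $\widehat{R_\fp}$. First I would localize: by Corollary \ref{pl.7}, $c/1$ is a big test element for $R_\fp$. Next I would observe that $R_\fp$ is an excellent local ring, using the standard fact that localizations of excellent rings are excellent. To be allowed to invoke Corollary \ref{jan.18} on $R_\fp$, the remaining hypothesis to verify is that $R_\fp$ satisfies $(R_0)$. For this, I would argue that $R_\fp$ has a big test element (namely $c/1$), which implies that $R_\fp$ has a test element for ideals (any big test element is a test element for ideals, since cyclic modules $R_\fp/\fb$ are among the modules $M$ for which one can test $0^*_M$), and then Lemma \ref{in.3} yields that $R_\fp$ satisfies $(R_0)$. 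Corollary \ref{jan.18} applied to $R_\fp$ then gives the desired conclusion: $c/1$ is a big test element for $\widehat{R_\fp}$.

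There is essentially no obstacle here beyond unpacking definitions; the work was already carried out in Corollaries \ref{pl.7} and \ref{jan.18}, and the present corollary is a synthesis. The only point that deserves a moment of care is making sure the hypotheses of Corollary \ref{jan.18} transfer to $R_\fp$, and as noted above this is handled by the chain excellence $\Rightarrow$ excellence of localizations, together with $(R_0)$ passing from $R$ (or directly being deducible at $R_\fp$ via Lemma \ref{in.3}).
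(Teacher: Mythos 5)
Your proposal is correct and takes essentially the same route as the paper: localize to get a big test element for $R_\fp$ via Corollary \ref{pl.7}, check the hypotheses of Corollary \ref{jan.18} for the excellent local ring $R_\fp$, and conclude. The only cosmetic difference is in how $(R_0)$ for $R_\fp$ is obtained — the paper first applies Lemma \ref{in.3} to $R$ itself and implicitly passes $(R_0)$ to the localization, whereas you apply Lemma \ref{in.3} directly to $R_\fp$ using the big test element $c/1$; both are fine, and yours is arguably marginally more self-contained.
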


\begin{proof} Let $c \in R^{\circ}$ be a big test element for $R$.
Then $R$ must satisfy condition $(R_0)$, by Lemma \ref{in.3}. Let
$\fp \in \Spec (R)$. By Corollary \ref{pl.7}, the natural image
$c/1$ of $c$ in $R_{\fp}$ is a big test element for $R_{\fp}$.

Now $R_{\fp}$ is excellent and local, and it satisfies condition
$(R_0)$; therefore $c/1$ is a big test element for
$\widehat{R_{\fp}}$, by Corollary \ref{jan.18}.
\end{proof}

\section{\it Some refinements}\label{wys}
We have proved in \S \ref{hrlr} that if $(R,\fm)$ is excellent and
local, and satisfies condition $(R_0)$, then each $c \in R^{\circ}$
for which $R_c$ is regular has a power that is a (locally stable and
completely stable) big test element for $R$. Moreover, we have shown
how this can be achieved without use of the Gamma construction. We
would now like to strengthen this result by replacing `$R_c$ is
regular' by `$R_c$ is Gorenstein and weakly $F$-regular', and we
would like the arguments that achieve this to be independent of the
Gamma construction.

It would be very helpful to have available the result that, if
$(R,\fm)$ is excellent, local, Gorenstein and weakly $F$-regular,
then the completion $\widehat{R}$ is also weakly $F$-regular. This
is indeed a result (of Hochster and Huneke \cite[Theorem
(7.24)]{HocHun94}), but their proof is dependent on the Gamma
construction. Consequently, the following less ambitious result is
provided, but with a proof that is independent of the Gamma
construction.

\begin{prop}\label{jun.101} {\rm (Compare Hochster--Huneke \cite[Theorem
(7.24)]{HocHun94} and I. M. Aberbach \cite[Theorem 3.4]{Aberb01}.)}
Suppose that $(R,\fm)$ is excellent and local, and that $(R',\fM)$
is another excellent local ring of characteristic $p$ that is a
faithfully flat extension of $R$; assume that, for all $\fp \in
\Spec(R)$, the fibre ring of the inclusion homomorphism $R \lra R'$
over $\fp$ is regular. Then $R$ is Gorenstein and weakly $F$-regular
if and only if $R'$ is Gorenstein and weakly $F$-regular.
\end{prop}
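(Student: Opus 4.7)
The plan is to handle the Gorenstein equivalence first and then each direction of weak $F$-regularity separately. Because the fibres of $R \lra R'$ are regular and in particular Gorenstein, the standard Bass-number behaviour under a flat local homomorphism gives at once that $R'$ is Gorenstein iff $R$ is. I may therefore assume throughout that both $R$ and $R'$ are Gorenstein.

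Descent (assuming $R'$ weakly $F$-regular, show $R$ is) is the easy direction. Given an ideal $\fb\subseteq R$ and $r\in\fb^*_R$ with a witness $c\in R^{\circ}$, flatness of $R\lra R'$ forces going-down, so every minimal prime of $R'$ contracts to a minimal prime of $R$ and therefore $c/1\in(R')^{\circ}$. Hence $r\in(\fb R')^*_{R'}=\fb R'$ by weak $F$-regularity of $R'$, and faithful flatness gives $\fb R'\cap R=\fb$, so $r\in\fb$.

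For the ascent $R\Rightarrow R'$, which is the hard direction, I would invoke Reminders \ref{tcjalg.1}(iii): since $R'$ is Gorenstein, it is enough to exhibit a single system of parameters of $R'$ generating a tightly closed ideal. The natural candidate is obtained by taking a sop $x_1,\ldots,x_d$ of $R$ (automatically a regular sequence, because $R$ is Cohen--Macaulay) together with elements $y_1,\ldots,y_e\in\fM$ whose images in the regular local closed fibre $R'/\fm R'$ form a regular system of parameters; then $\underline x,\underline y$ is a regular sequence and sop for $R'$, and writing $J:=(\underline x,\underline y)R'$ the task becomes to prove $J$ is tightly closed in $R'$.

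The principal obstacle is precisely this last step. I would attack it by passing to top local cohomology: for the Gorenstein ring $R$, weak $F$-regularity is equivalent to $0^*_{H^d_{\fm}(R)}=0$, and likewise for $R'$ it is equivalent to $0^*_{H^{d+e}_{\fM}(R')}=0$. Flatness of $R\lra R'$ yields the base-change identification $H^d_{\fm R'}(R')\cong H^d_{\fm}(R)\otimes_R R'$, compatibly with Frobenius; the regularity of the closed fibre $R'/\fm R'$, with $\underline y$ lifting its regular system of parameters, then lets one iterate local cohomology (either through the Grothendieck composition spectral sequence or directly through the \v{C}ech complex on $\underline x,\underline y$) to obtain a Frobenius-equivariant description of $H^{d+e}_{\fM}(R')$ in terms of $H^d_{\fm}(R)\otimes_R R'$. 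The key technical point — and the real hurdle — is to show that the tight closure of zero in the latter is controlled by the tight closure of zero in $H^d_{\fm}(R)$ itself, using going-down to lift witnesses in $R^\circ$ to $(R')^\circ$ and using the regularity of the fibre to prevent new tight-closure elements from appearing. Given that control, vanishing of $0^*_{H^d_{\fm}(R)}$ propagates upstairs to vanishing of $0^*_{H^{d+e}_{\fM}(R')}$; Matlis-dualising inside the Gorenstein ring $R'$ converts this into tight-closedness of $J$, and hence by Reminders~\ref{tcjalg.1}(iii) into weak $F$-regularity of $R'$, completing the proof without any appeal to the Gamma construction.
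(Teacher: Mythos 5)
Your treatment of the Gorenstein equivalence (via Bass numbers for a flat local homomorphism with Gorenstein fibres) is standard and fine, and your descent argument from $R'$ to $R$ is correct and essentially the same as the paper's (though the paper handles it implicitly by reducing to the domain case and invoking Aberbach). The genuine problem is the ascent direction, where you explicitly flag ``the key technical point --- and the real hurdle'' and then proceed under the hypothesis ``given that control.'' That control is precisely the content of the proposition in this direction, and you never supply it. Worse, the mechanism you gesture at --- ``using going-down to lift witnesses in $R^\circ$ to $(R')^\circ$'' --- points the wrong way: to show $0^*_{H^{d+e}_{\fM}(R')} = 0$ one starts from an element whose witness lives in $(R')^\circ$, and there is no canonical way to descend such a witness to $R^\circ$; going-down lifts $R^\circ$ into $(R')^\circ$, which is the direction you need for descent, not ascent.

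What makes the paper's proof close, and what your sketch lacks, is a common big test element. The paper first reduces to the case where $R$ and $R'$ are reduced (both are domains once Gorenstein and weakly $F$-regular, by Reminders~\ref{tcjalg.1}(i)), then uses excellence to find $c \in R^{\circ}$ with $R_c$ regular; the regular-fibre hypothesis makes $R'_c$ regular as well, and since both rings are excellent domains (hence satisfy $(R_0)$), Theorem~\ref{jan.17} produces a single power of $c$ that is a big test element for \emph{both} $R$ and $R'$. With a test element in hand that lives in $R$ but tests tight closure in $R'$, the sop computation becomes tractable: the paper sets up the sop $x_1,\dots,x_n$ of $R$ and lifts $z_1,\dots,z_d$ of a regular system of parameters for the closed fibre, reformulates both weak $F$-regularity conditions via Reminders~\ref{tcjalg.1}(iii), and then invokes Aberbach's \cite[Proposition~3.2(1)]{Aberb01}, which is exactly the result that pushes tight-closedness of $(x_1,\dots,x_n)R$ to tight-closedness of $(x_1,\dots,x_n,z_1^t,\dots,z_d^t)R'$ in the presence of such a test element. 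Your local-cohomology reformulation is not wrong in spirit (it is how one reads the sop criterion for Gorenstein rings), but without the test element there is no way to prevent new elements from appearing in $0^*_{H^{d+e}_{\fM}(R')}$, which is exactly the phenomenon that makes the non-Gamma-construction route to this proposition nontrivial.
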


\begin{proof}[Proof (without use of the Gamma construction).]
Since a weakly $F$-regular local ring of charactersitic $p$ is an
integral domain (see \ref{tcjalg.1}(i)), we can assume that both $R$
and $R'$ are Gorenstein and reduced. As $R$ is excellent, there
exists $c \in R^{\circ}$ such that $R_c$ is regular. The hypothesis
about the fibre rings ensures that $R'_c$ is regular. Since $c \in
R^{\circ}\subseteq R'^{\circ}$, we can now use Theorem \ref{jan.17}
to deduce that there is a power of $c$ that is a big test element
for both $R$ and $R'$.

Let $x_1, \ldots, x_n$ be a system of parameters for $R$, and let
$z_1, \ldots, z_d \in R'$ be such that their natural images in the
regular local ring $R'/\fm R'$ form a regular system of parameters
for that ring; note that $x_1, \ldots, x_n,z_1, \ldots, z_d$ is a
system of parameters for $R'$. It follows from \ref{tcjalg.1}(iii)
that $R$ is weakly $F$-regular if and only if the ideal of $R$
generated by $x_1, \ldots, x_n$ is tightly closed, and that $R'$ is
weakly $F$-regular if and only if, for some $t \in \N$, the ideal of
$R'$ generated by $x_1, \ldots, x_n,z_1^t, \ldots, z_d^t$ is tightly
closed. We can now appeal to I. M. Aberbach \cite[Proposition
3.2(1)]{Aberb01} to complete the proof.
\end{proof}

\begin{cor}
\label{sr.2} Suppose that $R \subseteq R'$ is a faithfully flat
extension of excellent rings of characteristic $p$ such that all the
fibre rings of the inclusion ring homomorphism are regular, and that
$c \in R$ is such that $R_c$ is Gorenstein and weakly $F$-regular.
Then $R'_c$ is Gorenstein and weakly $F$-regular.
\end{cor}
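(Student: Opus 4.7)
The plan is to reduce to the local situation of Proposition \ref{jun.101}. By \ref{tcjalg.1}(iv), for a Gorenstein ring, being weakly $F$-regular is equivalent to being $F$-regular, i.e., to having every localization weakly $F$-regular. So it suffices to show that, for every $\fQ \in \Spec(R'_c)$, the local ring $(R'_c)_{\fQ}$ is Gorenstein and weakly $F$-regular; then $R'_c$ will be (locally) Gorenstein and $F$-regular, and a second appeal to \ref{tcjalg.1}(iv) will upgrade the latter to weakly $F$-regular.

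Fix such a $\fQ$, lift it to $\fP \in \Spec(R')$ with $c \notin \fP$ (so that $(R'_c)_{\fQ} = R'_{\fP}$), and set $\fp := \fP \cap R$. Then $c \notin \fp$, and $R_{\fp} = (R_c)_{\fp R_c}$. I would then verify that $R_{\fp} \subseteq R'_{\fP}$ satisfies the hypotheses of Proposition \ref{jun.101}: flatness descends to this localization from $R \lra R'$; the map is local because $\fp$ is the contraction of $\fP$, giving faithful flatness; excellence is preserved under localization; and the fibre of $R_{\fp} \lra R'_{\fP}$ over a prime $\fq R_{\fp}$ (with $\fq \subseteq \fp$) is a localization of $R' \otimes_R k(\fq)$, which is regular by hypothesis, hence itself regular.

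Now apply \ref{tcjalg.1}(iv) to the Gorenstein, weakly $F$-regular ring $R_c$ to conclude that $R_c$ is $F$-regular; hence its localization $R_{\fp}$ is weakly $F$-regular. It is also Gorenstein, since Gorensteinness passes to localizations. Proposition \ref{jun.101} applied to $R_{\fp} \subseteq R'_{\fP}$ then delivers that $R'_{\fP} = (R'_c)_{\fQ}$ is Gorenstein and weakly $F$-regular, which is what was needed.

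The argument involves no genuine obstacle: the substantive, Gamma-free content has already been packaged into Proposition \ref{jun.101} and the equivalences in \ref{tcjalg.1}(iv). The only points demanding a little care are the routine verifications that regularity of fibres and faithful flatness survive the passage from the global extension $R \subseteq R'$ to the local extension $R_{\fp} \subseteq R'_{\fP}$, and that localizations of $R_c$ inherit Gorensteinness and (via $F$-regularity) weak $F$-regularity.
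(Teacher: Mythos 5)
Your proof is essentially the same as the paper's: both reduce to showing, for each prime $\fP$ of $R'$ not containing $c$, that $R'_{\fP}$ is Gorenstein and weakly $F$-regular, and both achieve this by applying Proposition \ref{jun.101} to the flat local map $R_{\fp} \lra R'_{\fP}$ (with $\fp = \fP \cap R$), after observing that the fibres localize to regular rings and that $R_{\fp}$, being a localization of the $F$-regular (by \ref{tcjalg.1}(iv)) ring $R_c$, is Gorenstein and weakly $F$-regular. The one spot where you are a shade less careful than the paper: to conclude from ``all localizations of $R'_c$ at primes are weakly $F$-regular'' that $R'_c$ is $F$-regular (i.e., that \emph{every} localization, at an arbitrary multiplicative set, is weakly $F$-regular), you need the standard fact that weak $F$-regularity is a local property; the paper handles this by citing \cite[Theorem 1.5(3)]{Hunek96}, which you should do as well, and which also lets the paper bypass the detour through ``$F$-regular'' for $R'_c$ itself.
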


\begin{proof} The fibre rings of the flat ring homomorphism
$R_c \lra R'_c$ induced by inclusion are rings of fractions of the
fibre rings of the inclusion homomorphism $R \lra R'$, and so are
regular. Therefore $R'_c$ is Gorenstein. Note that, by
\ref{tcjalg.1}(iv), a Gorenstein ring is weakly $F$-regular if and
only if it is $F$-regular.

By \cite[Theorem 1.5(3)]{Hunek96}, in order to prove that $R'_c$ is
weakly $F$-regular, it is enough to prove that every localization
(at a prime ideal) of $R'_c$ is weakly $F$-regular. Therefore, in
order to prove the corollary, it is enough to prove that, for each
$\fP \in \Spec (R')$ such that $c \not\in \fP$, the localization
$R'_{\fP}$ is Gorenstein and weakly $F$-regular.

Let $\fp := \fP \cap R$; note that $c \not\in \fp$, so that
$R_{\fp}$ is Gorenstein and weakly $F$-regular. The inclusion
homomorphism $R \lra R'$ induces a flat local homomorphism $R_{\fp}
\lra R'_{\fP}$ all of whose fibre rings are regular (because they
are rings of fractions of fibre rings of $R \lra R'$). Since
$R_{\fp}$ and $R'_{\fP}$ are excellent, Proposition \ref{jun.101}
therefore shows that $R'_{\fP}$ is Gorenstein and weakly
$F$-regular.
\end{proof}

The proposition below is a modification of \cite[Corollary
2.13]{Fpurhastest}.

\begin{prop}
\label{hrlr.12} Use the notation of\/ {\rm \ref{hrlr.1}}, so that
$(R,\fm)$ is local and equal to the homomorphic image $S/\fa$ of the
regular local ring $S$ of characteristic $p$.

Suppose that $u \in S$ is such that $ (\fa^{[p]}:\fa) = \fa^{[p]} +
Su.$ Use $u$ to furnish $E_R(R/\fm)$ with a structure as a left
$R[x,f]$-module, and denote this left $R[x,f]$-module by $D_1$. If
$R$ is excellent, Gorenstein and weakly $F$-regular, then $D_1$ is a
simple $R[x,f]$-module.
\end{prop}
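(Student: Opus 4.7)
The overall plan is to pass to the completion of $R$ and invoke the complete case already established in \cite[Corollary 2.13]{Fpurhastest}, of which this proposition is an ``excellent'' upgrade. The benefit of this reduction is that once we are complete, the module $D_1$ is identifiable with $H^d_{\fm}(R)$ carrying its canonical $R[x,f]$-structure, and simplicity of that module is precisely \ref{tcjalg.1}(v)(a).

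First I would set up the reduction. Let $\widehat{S}$ denote the $\fn$-adic completion of $S$; it is a complete regular local ring, and $\widehat{R} = \widehat{S}/\fa\widehat{S}$ is the $\fm$-adic completion of $R$. Flatness of $S \lra \widehat{S}$ gives
$$
\left((\fa\widehat{S})^{[p]} : \fa\widehat{S}\right) = \left(\fa^{[p]}:\fa\right)\widehat{S} = (\fa\widehat{S})^{[p]} + \widehat{S}u,
$$
so the element $u$ (viewed in $\widehat{S}$) satisfies the analogue of the hypothesis of \ref{hrlr.12} with $S$, $\fa$, $R$ replaced by $\widehat{S}$, $\fa\widehat{S}$, $\widehat{R}$. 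Using the identification $E_R(R/\fm) \cong E_{\widehat{R}}(\widehat{R}/\widehat{\fm})$ as $\widehat{R}$-modules, we may use $u$ to furnish this Artinian module with a structure of left $\widehat{R}[x,f]$-module (in the sense of \ref{jun.5}(i)); call the resulting module $\widehat{D_1}$. Its $x$-action is identical with that of $D_1$, and its $\widehat{R}$-action is the unique extension of the $R$-action on the Artinian module $E_R(R/\fm)$. Since $R$ is excellent, the fibres of $R \hookrightarrow \widehat{R}$ are regular, so Proposition \ref{jun.101} guarantees that $\widehat{R}$ is also Gorenstein and weakly $F$-regular.

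Next I would argue that the complete case suffices. Any $R$-submodule of the Artinian module $E_R(R/\fm)$ is automatically an $\widehat{R}$-submodule, because the $\widehat{R}$-action is the unique continuous extension of the $R$-action on such an Artinian module. Since the $x$-action coincides on $D_1$ and $\widehat{D_1}$, every $R[x,f]$-submodule of $D_1$ is an $\widehat{R}[x,f]$-submodule of $\widehat{D_1}$, and conversely. In particular, $D_1$ is $R[x,f]$-simple if and only if $\widehat{D_1}$ is $\widehat{R}[x,f]$-simple, so we are reduced to the case in which $R = \widehat{R}$ and $S = \widehat{S}$ are both complete.

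In that complete case the result is (a reformulation of) \cite[Corollary 2.13]{Fpurhastest}: since $R$ is Gorenstein of Krull dimension $d$, there is an $R$-isomorphism $E_R(R/\fm) \cong H^d_{\fm}(R)$, and under the explicit choice of $u$ with $(\fa^{[p]}:\fa) = \fa^{[p]} + Su$ this isomorphism is in fact an isomorphism of left $R[x,f]$-modules between $D_1$ and $H^d_{\fm}(R)$ with its natural $R[x,f]$-structure. Since $R$ is complete, Gorenstein and weakly $F$-regular, Reminder \ref{tcjalg.1}(v)(a) then yields that $H^d_{\fm}(R)$ is a simple left $R[x,f]$-module, so $D_1$ is simple as well. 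The main obstacle is the identification, in the complete Gorenstein case, of $D_1$ with the canonical $R[x,f]$-module $H^d_{\fm}(R)$; this is exactly the content of the computation in \cite[Corollary 2.13]{Fpurhastest}, and as noted in the excerpt it is Gamma-free once one replaces Smith's theorem by \ref{tcjalg.1}(v).
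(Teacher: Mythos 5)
Your proof is correct and follows essentially the same approach as the paper: reduce to the completion (using the fact that the $\widehat{R}$-module structure on the Artinian module $E_R(R/\fm)$ is the unique extension of its $R$-module structure, so that $R[x,f]$- and $\widehat{R}[x,f]$-submodules coincide), invoke Proposition~\ref{jun.101} to see that $\widehat{R}$ is Gorenstein and weakly $F$-regular, and then appeal to the complete case handled in \cite[Corollary 2.13]{Fpurhastest} with Smith's theorem replaced by \ref{tcjalg.1}(v)(a). One small caution about your last paragraph: for an arbitrary $u$ with $(\fa^{[p]}:\fa) = \fa^{[p]} + Su$, the module $D_1$ need not be literally $R[x,f]$-isomorphic to $H^d_{\fm}(R)$ with its canonical structure --- two such generators differ (modulo $\fa^{[p]}$) by a factor whose image in $R$ is a unit (this follows from Lemma~\ref{jun.2} with $n=1$), so the two $x$-actions differ by a unit of $R$ --- but this rescaling does not change the lattice of $R[x,f]$-submodules of $E_R(R/\fm)$, so the simplicity conclusion transfers and your argument is unaffected.
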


\begin{proof} Note that, by Proposition \ref{jun.101}, the
completion $\widehat{R}$ is weakly $F$-regular (and Gorenstein).
With this observation, this proposition can be proved by a
straightforward modification of the proof of \cite[Corollary
2.13]{Fpurhastest}: at the end, use the weak version of Smith's
Theorem cited as \ref{tcjalg.1}(v)(a), which can be proved without
use of test elements.
\end{proof}

We are now in a position to prove a strengthening of Theorem
\ref{jan.17}.

\begin{thm}
\label{jan.17gf} Suppose that $(R,\fm)$ is excellent and local, and
that $R$ satisfies condition $(R_0)$. Then, for each $c \in
R^{\circ}$ for which $R_c$ is Gorenstein and weakly $F$-regular,
some power of $c$ is a big test element for $R$.
\end{thm}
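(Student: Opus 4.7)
The strategy is to adapt the proof of Theorem~\ref{jan.17} almost verbatim, with one crucial upgrade. In that theorem the simplicity of the injective envelope of the residue field of $R_{\fp}$ as a left $R_{\fp}[x,f]$-module was obtained from $R_{\fp}$ being regular (via \cite[Corollary 2.13]{Fpurhastest}); here the same conclusion must instead be drawn from $R_{\fp}$ being Gorenstein and weakly $F$-regular, and this is precisely what Proposition~\ref{hrlr.12} delivers.

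First I would carry out the reductions to the complete local domain case. Proposition~\ref{tc.postant} reduces the problem to the situation where $R$ is an excellent local domain (using that, by \ref{tcjalg.1}(iv), a Gorenstein weakly $F$-regular ring is $F$-regular, so the hypothesis on $c$ is inherited by each factor $R/\fp_i$). To pass to the completion I would combine Lemma~\ref{feb.2}(i) (preservation of $(R_0)$), Corollary~\ref{sr.2} applied to the faithfully flat extension $R \hookrightarrow \widehat{R}$, whose fibres are regular by excellence, to conclude that $\widehat{R}_c$ is Gorenstein and weakly $F$-regular, and then Proposition~\ref{jul.4} together with Lemma~\ref{jan.16}(iii) to transfer a big test element for $\widehat{R}$ back to $R$. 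Cohen's Structure Theorem then allows us to write $R = S/\fa$ with $S$ a complete regular local ring of characteristic $p$ and $\fa$ a prime ideal of $S$.

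Next, let $\fp \in \Spec(R)$ be any prime with $c \notin \fp$. Then $R_{\fp}$, being a localization of the $F$-regular ring $R_c$ (use \ref{tcjalg.1}(iv)), is Gorenstein and weakly $F$-regular, hence $F$-pure by \ref{tcjalg.1}(i). Letting $\fq \in \Var(\fa)$ be the preimage of $\fp$ in $S$, Proposition~\ref{jun.1} produces $u_1,\dots,u_l \in (\fa^{[p]}:\fa) \setminus \fq^{[p]}$ whose natural images generate $T := (\fa^{[p]}:\fa)/\fa^{[p]}$ over $S$. Since $R_{\fp}$ is Gorenstein, Remark~\ref{jun.5}(iii) says that $((\fa S_{\fq})^{[p]} : \fa S_{\fq})/(\fa S_{\fq})^{[p]}$ is cyclic over $S_{\fq}$, so some $u_i$ satisfies $((\fa S_{\fq})^{[p]} : \fa S_{\fq}) = (\fa S_{\fq})^{[p]} + (u_i/1)S_{\fq}$. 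I would then use this $u_i$ to furnish $E_R(R/\fm)$ with a left $R[x,f]$-module structure in the sense of \ref{jun.5}(i); because $u_i \notin \fq^{[p]} \supseteq \fa^{[p]}$, Remark~\ref{jun.5}(iv) shows this module is not $x$-torsion, so by Matlis duality $\height(0:_R\Gamma_x(E_R(R/\fm))) > 0$, and Corollary~\ref{pl.6} then tells us that $\fd_0 := (0:_R\Delta(E_R(R/\fm)))$ has positive height with $\fd_0 \cap R^{\circ}$ consisting of big test elements for $R$.

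The final and most delicate step is to prove $\fd_0 \not\subseteq \fp$. Matlis duality (using completeness of $R$) identifies $\Delta(E_R(R/\fm)) = (0:_E\fk_0)$ where $\fk_0 \supseteq \fa$ satisfies $\fd_0 = \fk_0/\fa$, and since $\Delta(E_R(R/\fm))$ is an $R[x,f]$-submodule, \ref{jun.5}(i) forces $u_i \in (\fk_0^{[p]}:\fk_0)$, hence $u_i/1 \in ((\fk_0 S_{\fq})^{[p]}:\fk_0 S_{\fq})$. Putting on $G := E_{R_{\fp}}(R_{\fp}/\fp R_{\fp})$ the $R_{\fp}[x,f]$-structure coming from $u_i/1$, the submodule $(0:_G\fd_0 R_{\fp})$ is an $R_{\fp}[x,f]$-submodule of $G$. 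The main obstacle, now resolved, is this: because $R_{\fp}$ is excellent, Gorenstein and weakly $F$-regular and $((\fa S_{\fq})^{[p]} : \fa S_{\fq}) = (\fa S_{\fq})^{[p]} + (u_i/1)S_{\fq}$, Proposition~\ref{hrlr.12} asserts that $G$ is simple as a left $R_{\fp}[x,f]$-module. Therefore $(0:_G\fd_0 R_{\fp}) \in \{0, G\}$, which by Matlis duality forces $\fd_0 R_{\fp} \in \{0, R_{\fp}\}$; since $R$ is a domain and $\fd_0 \neq 0$ we must have $\fd_0 R_{\fp} = R_{\fp}$, so $\fd_0 \not\subseteq \fp$. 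This holds for every $\fp \in \Spec(R)$ with $c \notin \fp$, so $c$ lies in the radical of $\fd_0 \subseteq (0:_R\Delta(R[x,f]\otimes_R E_R(R/\fm)))$ (the containment by Theorem~\ref{pl.5c}), and hence a suitable power of $c$ is a big test element for $R$. The crucial point is that Proposition~\ref{hrlr.12}, together with Proposition~\ref{jun.101} on which it depends, both avoid the Gamma construction, so this entire argument does too.
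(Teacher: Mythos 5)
Your proposal is correct and follows the paper's own proof essentially verbatim: the reduction via Proposition~\ref{tc.postant}, Lemma~\ref{feb.2}, Corollary~\ref{sr.2} and Proposition~\ref{jul.4} to the complete local domain case, the use of Proposition~\ref{jun.1} and Reminder~\ref{jun.5}(iii) to choose the generator $u_i$, Corollary~\ref{pl.6} for the existence of big test elements, and above all the replacement of the appeal to \cite[Corollary 2.13]{Fpurhastest} by Proposition~\ref{hrlr.12} are exactly the adaptations the paper makes. The only cosmetic quibble is that in the closing step you write that $c$ lies in the radical of $\fd_0$, when $\fd_0$ depends on the chosen $\fp$; what you actually establish (and what you clearly intend) is that the big test ideal $(0:_R\Delta(R[x,f]\otimes_R E_R(R/\fm)))$, which contains each such $\fd_0$, misses no prime avoiding $c$, so $c$ lies in its radical.
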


\begin{proof} The argument used earlier to prove Theorem
\ref{jan.17} can now be adapted to prove this stronger statement: we
have $c \in \widehat{R}^{\circ}$; it follows from Corollary
\ref{sr.2} that $\widehat{R}_c$ is Gorenstein and weakly
$F$-regular; we can assume that $R$ is a complete local domain; we
should choose $\fp \in \Spec (R)$ such that $R_{\fp}$ is Gorenstein
and weakly $F$-regular (and so $F$-pure by \ref{tcjalg.1}(i)); we
can still use Proposition \ref{jun.1}; and the appeal to
\cite[Corollary 2.13]{Fpurhastest} (in the proof of \ref{jan.17})
should be replaced by an appeal to Proposition \ref{hrlr.12}.
\end{proof}

The final two results in this section present the promised
strengthening of the author's result in \cite[Theorem
4.16]{Fpurhastest} about $F$-pure excellent rings.

\begin{cor}
\label{may.1} Suppose that $R$ is excellent, local and $F$-pure, and
that $c \in R^{\circ}$ is such that $R_c$ is Gorenstein and weakly
$F$-regular. Then $c$ itself is a big test element for $R$.
\end{cor}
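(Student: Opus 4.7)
The plan is to combine Theorem \ref{jan.17gf} with Corollary \ref{jul.26}, exploiting $F$-purity to force the relevant annihilator ideal to be radical. Set $E := E_R(R/\fm)$. First I would observe that $F$-purity of $R$ forces the Frobenius homomorphism $f : R \lra R$ to be injective (so $R$ is reduced) and in particular $R$ satisfies condition $(R_0)$. Theorem \ref{jan.17gf} therefore applies and supplies an integer $k \in \N$ for which $c^k$ is a big test element for $R$; equivalently, by Theorem \ref{pl.5c}, $c^k \in (0:_R\Delta(R[x,f]\otimes_RE))$.

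The crux of the argument will be to upgrade from $c^k$ to $c$ itself, and the key observation is that $F$-purity eliminates all $x$-torsion from $R[x,f]\otimes_RE$. For each $j \in \nn$, I would set $N := Rx^j\otimes_RE$ and invoke the associativity isomorphism of Example \ref{jan.5} to identify $Rx\otimes_RN$ with $Rx^{j+1}\otimes_RE$; under this identification, the map $\psi_N : N \lra Rx\otimes_RN$ from the definition of $F$-purity becomes precisely the action of multiplication by $x$ from $Rx^j\otimes_RE$ to $Rx^{j+1}\otimes_RE$. Since $R$ is $F$-pure, each $\psi_N$ is injective, whence multiplication by $x$ is injective on each graded piece of $R[x,f]\otimes_RE$, and therefore $\Gamma_x(R[x,f]\otimes_RE) = 0$.

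Now $R[x,f]\otimes_RE$ is $x$-torsion-free, so Reminders \ref{jan.23}(i) yield that the $R$-annihilator of its $R[x,f]$-submodule $\Delta(R[x,f]\otimes_RE)$ is a radical ideal of $R$. I would then apply Corollary \ref{jul.26} with $n = 1$ (legitimate because the radical of $(0:_R\Delta(R[x,f]\otimes_RE))$ equals the ideal itself): since some power of $c$ is already a big test element for $R$, the corollary forces $c = c^1$ to be a big test element for $R$, as required.

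The main obstacle, such as it is, lies in the second paragraph, namely the clean identification of $\psi_N$ with multiplication by $x$ in the graded module $R[x,f]\otimes_RE$; this is what converts the $F$-purity hypothesis into $x$-torsion-freeness, and hence into radicality of the annihilator ideal. Once this is in hand, the remaining ingredients (Theorem \ref{jan.17gf}, Theorem \ref{pl.5c}, Reminders \ref{jan.23} and Corollary \ref{jul.26}) combine essentially formally to yield the conclusion.
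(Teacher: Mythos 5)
Your proof is correct and follows essentially the same route as the paper's: both deduce $x$-torsion-freeness of $R[x,f]\otimes_R E_R(R/\fm)$ from $F$-purity and then feed this into Corollary~\ref{jul.26} to improve ``some power of $c$'' to ``$c$ itself.'' The only cosmetic difference is that you invoke the $n$-clause of Corollary~\ref{jul.26} (via radicality of $(0:_R\Delta(\cdot))$, from Reminders~\ref{jan.23}(i)), whereas the paper invokes the $t$-clause with $t=0$ (via $(0:_R\Gamma_x(\cdot))=R$); both rest on exactly the same $x$-torsion-freeness observation.
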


\begin{proof} Since $R$ is $F$-pure, it is reduced, and so satisfies
the condition $(R_0)$; therefore, by Theorem \ref{jan.17gf}, some
power of $c$ is a big test element for $R$. Since $R$ is $F$-pure,
the left $R[x,f]$-module $R[x,f]\otimes_RM$ is $x$-torsion-free, for
each $R$-module $M$. In particular, $R[x,f]\otimes_RE_R(R/\fm)$ is
$x$-torsion-free, so that $(0:_R\Gamma_x(R[x,f]\otimes_RE_R(R/\fm)))
= R$. It therefore follows from Corollary \ref{jul.26} that $c$
itself is a big test element for $R$.
\end{proof}

\begin{cor}
\label{may.2} Suppose that $R$ is excellent and $F$-pure (but not
necessary local), and that $c \in R^{\circ}$ is such that $R_c$ is
Gorenstein and weakly $F$-regular. Then $c$ itself is a big test
element for $R$.
\end{cor}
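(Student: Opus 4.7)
The plan is to reduce to the already-established local case (Corollary \ref{may.1}) by checking the big test element property after localizing at each maximal ideal of $R$, and then reassembling the local information via Theorem \ref{pl.5c}.

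First, I would fix an arbitrary $\fm \in \Max(R)$ and verify that the hypotheses of Corollary \ref{may.1} hold for $R_\fm$. Excellence and $F$-purity both pass to $R_\fm$ (the defining purity of Frobenius survives the flat base change $(\,\cdot\,)\otimes_R R_\fm$). Because $F$-pure rings are reduced, the minimal primes of $R_\fm$ are the extensions of those minimal primes of $R$ contained in $\fm$, so $c \in R^{\circ}$ forces $c/1 \in R_\fm^{\circ}$. Moreover, $(R_\fm)_{c/1}$ is a ring of fractions of $R_c$, and $R_c$ is Gorenstein and $F$-regular by \ref{tcjalg.1}(iv); therefore every ring of fractions of $R_c$, and in particular $(R_\fm)_{c/1}$, is Gorenstein and weakly $F$-regular. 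Corollary \ref{may.1} then yields that $c/1$ is a big test element for $R_\fm$.

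Next, I would transfer this conclusion back to $R$. Theorem \ref{pl.5c}, applied to $R_\fm$ with injective cogenerator $E_{R_\fm}(R_\fm/\fm R_\fm)$, gives
$$c/1 \in (0:_{R_\fm}\Delta_{R_\fm}(R_\fm[x,f]\otimes_{R_\fm} E_{R_\fm}(R_\fm/\fm R_\fm))).$$
Proposition \ref{feb.1}(iv) identifies this $R_\fm$-annihilator (after contraction to $R$) with $(0:_R\Delta_R(R[x,f]\otimes_R E_R(R/\fm)))$, so $c$ itself lies in the latter ideal.

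Finally, I would set $E := \bigoplus_{\fm \in \Max(R)}E_R(R/\fm)$, which is an injective cogenerator of $R$. The direct sum decomposition $R[x,f]\otimes_R E = \bigoplus_{\fm} R[x,f]\otimes_R E_R(R/\fm)$ is respected by $\Delta$, so
$$\bigl(0:_R\Delta(R[x,f]\otimes_R E)\bigr) = \bigcap_{\fm \in \Max(R)}\bigl(0:_R\Delta(R[x,f]\otimes_R E_R(R/\fm))\bigr);$$
by the previous step, $c$ belongs to every term of this intersection, hence to their intersection. A final application of Theorem \ref{pl.5c} then concludes that $c$ itself is a big test element for $R$. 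There is no genuine obstacle here, since Corollary \ref{may.1} has already done the substantive work; the only point needing any care is verifying that $(R_\fm)_{c/1}$ remains Gorenstein and weakly $F$-regular, which hinges on upgrading weak $F$-regularity to $F$-regularity via the Gorenstein hypothesis and \ref{tcjalg.1}(iv) so that the property is preserved under further localization.
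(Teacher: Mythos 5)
Your proof is correct and takes essentially the same route as the paper's: localize at each maximal ideal, check that Corollary~\ref{may.1} applies (using that $F$-purity and excellence localize, and that $(R_\fm)_{c/1}$ remains Gorenstein and weakly $F$-regular via \ref{tcjalg.1}(iv)), pull the conclusion back to $R$ with Theorem~\ref{pl.5c} and Proposition~\ref{feb.1}(iv), and reassemble over the injective cogenerator $\bigoplus_{\fm}E_R(R/\fm)$. The only cosmetic difference is that the paper cites Hochster--Roberts \cite[Lemma 6.2]{HocRob74} for the localization of $F$-purity where you give a brief flat-base-change heuristic, and the paper states the direct-sum compatibility of $\Delta$ tersely where you spell out the resulting intersection of annihilators.
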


\begin{proof} Let
$\fm \in \Max(R)$. By Hochster and J. L. Roberts \cite[Lemma
6.2]{HocRob74}, the localization $R_{\fm}$ is $F$-pure; it is also
excellent. Since $\big(R_{\fm}\big)_{c/1}$, the ring of fractions of
$R_{\fm}$ with respect to the set of powers of the element $c/1$ of
$(R_{\fm})^{\circ}$, is a ring of fractions of $R_c$, it is
Gorenstein and weakly $F$-regular. (We have used \ref{tcjalg.1}(iv)
here.) It therefore follows from Corollary \ref{may.1} that $c/1$ is
a big test element for $R_{\fm}$. Therefore $ c/1 \in (0:_{R_{\fm}}
\Delta_{R_{\fm}}(R_{\fm}[x,f]\otimes_{R_{\fm}}
E_{R_{\fm}}(R_{\fm}/\fm R_{\fm})))$ in $R_{\fm}$, by Theorem
\ref{pl.5c}. Proposition \ref{feb.1}(iv) now shows that $c \in (0:_R
\Delta_R(R[x,f]\otimes_R E_R(R/\fm)))$. This is true for all $\fm
\in \Max(R)$. Now it is easy to see that there is a homogeneous
isomorphism of graded left $R[x,f]$-modules
$$
R[x,f] \otimes_R \left({\textstyle \bigoplus_{\fm
\in\Max(R)}E_R(R/\fm)} \right) \cong {\textstyle \bigoplus_{\fm
\in\Max(R)} \left(R[x,f]\otimes_RE_R(R/\fm)\right)},
$$
and to deduce from this that
$
c \in \left(0:_R \Delta_R\!\left(R[x,f] \otimes_R\!
\left({\textstyle \bigoplus_{\fm \in\Max(R)}E_R(R/\fm)}
\right)\right)\right).
$
Since $\bigoplus_{\fm \in\Max(R)}E_R(R/\fm)$ is an injective
cogenerator of $R$, another use of Theorem \ref{pl.5c} shows that
$c$ is a big test element for $R$.
\end{proof}

\section{\it Frobenius-intersection-flatness}
\label{if}

\begin{defs}\label{if.1} Recall from \cite[p.\ 41]{HocHun94} that a
commutative $R$-algebra $R'$ is said to be {\em intersection-flat\/}
if $R'$ is flat as an $R$-algebra and, for every non-empty family of
submodules $(M_{\lambda})_{\lambda\in\Lambda}$ of a finitely
generated $R$-module $M$, the natural $R'$-monomorphism
$$
R'\otimes_R{\textstyle
\left(\bigcap_{\lambda\in\Lambda}M_{\lambda}\right)} \lra
{\textstyle \bigcap_{\lambda\in\Lambda}(R'\otimes_RM_{\lambda})}
$$
is an isomorphism. It is easy to check that, if $R'$ is an
intersection-flat $R$-algebra and $R^{\prime\prime}$ is an
intersection-flat $R'$-algebra, then $R^{\prime\prime}$ is an
intersection-flat $R$-algebra.

Let $S$ be a regular ring of characteristic $p$. We shall say that
$S$ is {\em Frobenius-intersection-flat\/}, or {\em $F$-$\cap$-flat
\/} for short, if the Frobenius homomorphism $f : S \lra S$ is
intersection-flat; then, for all $n \in \N$, the $n$th interate $f^n
: S \lra S$ is also intersection-flat.
\end{defs}

An $F$-finite regular ring $S$ of characteristic $p$ is
$F$-$\cap$-flat, because, when viewed as an $S$-module via $f$, it
is finitely generated and flat, and therefore projective. We now
review some more examples, provided by M. Katzman in \cite[\S
5]{Katzm08}, of regular rings of characteristic $p$ that are
$F$-$\cap$-flat.

\begin{exs}\label{if.2} Let $\K$ be a field of characteristic $p$ and let $X_1,\ldots,X_n,Y_1,
\ldots,Y_m$ be independent indeterminates.
\begin{enumerate}
\item The polynomial ring $\K[X_1,\ldots,X_n]$, and each
localization of $\K[X_1,\ldots,X_n]$, are $F$-$\cap$-flat. See
\cite[p.\ 941]{Katzm08}.
\item The ring of formal power series $\K[[X_1,\ldots,X_n]]$ is
$F$-$\cap$-flat (by \cite[Proposition 5.3]{Katzm08}).
\item If $\K^{1/p}$ is a finite extension of $\K$, then $
\K[[X_1,\ldots,X_n]][Y_1,\ldots,Y_m]$ is $F$-$\cap$-flat, because $T
:= \K[[X_1,\ldots,X_n]]$ is such that $T^{1/p}$ is a free
$T$-module. See \cite[p.\ 941]{Katzm08}.
\end{enumerate}
\end{exs}

It is natural to ask whether there are examples of regular rings of
prime characteristic that are not $F$-$\cap$-flat. I am very
grateful to M. Hochster for permitting me to include the following.

\begin{ex}[M. Hochster] \label{mh} Let $\K$ be an algebraically closed
field of characteristic $p$ and let $T_1,\ldots,T_n,\ldots$ be
countably many independent indeterminates over $\K$. Let
$$
L = K(T_1,\ldots,T_n,\ldots), \quad L_0 =
K(T_1^p,\ldots,T_n^p,\ldots)
$$
and $L_i = L_0(T_1,\ldots,T_i) = K(T_1,\ldots,T_i,T_{i+1}^p,
\ldots,T_{i+j}^p, \ldots )$ for all $i \in \N$. Let $X,Y$ be
independent indeterminates over $L$, and let $R = \bigcup_{i\in
\nn}L_i[[X,Y]]$, a subring of $L[[X,Y]]$. It is straightforward to
show that $R$ is a Noetherian local ring (Cohen's Theorem can be
helpful) that is regular of dimension $2$. In fact, working inside
an algebraic closure of the quotient field $\Quot (L[[X,Y]])$ of
$L[[X,Y]]$, we have $R \subseteq \widehat{R} = L[[X,Y]] \subseteq
R^{1/p} \subseteq \widehat{R}^{1/p}$, from which we deduce that
$R^{1/p}$ is a faithfully flat extension of $L[[X,Y]]$ (because
$\widehat{R}^{1/p}$ is faithfully flat over both $R^{1/p}$ and
$\widehat{R}$).

Let $f_k := Y - \sum_{i=1}^{k} T_iX^i$ and $\fa_k := (f_k,
X^{k+1})R$, for all $k \in \N$. In $L[X,Y]]$, set $f := Y -
\sum_{i=1}^{\infty} T_iX^i$. Note that $\fa_kL[[X,Y]] =
(f,X^{k+1})L[[X,Y]]$ and so $\fa_kR^{1/p} = (f,X^{k+1})R^{1/p}$, for
all $k \in \N$. Krull's Intersection Theorem therefore yields that
$\bigcap_{k\in \N}\left(\fa_kL[[X,Y]]\right) = fL[[X,Y]]$ and
$\bigcap_{k\in \N}\left(\fa_kR^{1/p}\right) = fR^{1/p}$. By faithful
flatness, $fR^{1/p} \cap L[[X,Y]] = fL[[X,Y]]$, a prime ideal of
$L[[X,Y]]$ of height $1$; therefore $fR^{1/p} \cap R$ is a prime
ideal of $R$ of height $1$ or $0$, and so is principal, generated by
$g \in R$, say. Thus $gR = fR^{1/p} \cap R = {\textstyle
\bigcap_{k\in \N}\left(\fa_kR^{1/p}\right)} \cap R =
{\textstyle\bigcap_{k\in \N}\fa_k}$.

Suppose that $R$ is $F$-$\cap$-flat. Then
\begin{align*}
gL[[X,Y]] & = \left(\left({\textstyle\bigcap_{k\in
\N}\fa_k}\right)R^{1/p}\right) \cap L[[X,Y]] = {\textstyle
\bigcap_{k\in \N}\left(\fa_kR^{1/p}\right)} \cap L[[X,Y]]\\ & =
{\textstyle \bigcap_{k\in \N}\fa_kL[[X,Y]]} = fL[[X,Y]].
\end{align*}
Thus $f = gu$ for some unit $u \in L[[X,Y]]$. There exists $j \in
\N$ such that $g \in L_j[[X,Y]]$. After multiplication by an element
of $L_j$, we can assume that the homogeneous component of $g$ of
degree $1$ is $Y -T_1X$. Use the division algorithm (for division by
$Y-T_1X \in L_j[X][Y]$) in order to construct, inductively, the
homogeneous components of a unit $v$ of $L_j[[X,Y]]$ such that $vg =
Y - h$ for some non-unit $h \in L_j[[X]]$. Then $f = (Y-h)uv^{-1}$
in $L[[X,Y]]$, and the substitution $X \mapsto X$, $Y \mapsto h$
therefore yields that $h = \sum_{i=1}^{\infty} T_iX^i$. This is a
contradiction since $h \in L_j[[X]]$.

The regular local ring $R$ in this example is not excellent, because
$\Quot(\widehat{R})$ is a purely inseparable extension of
$\Quot(R)$. An interesting question, believed by this author to be
open, is whether there exists an {\em excellent\/} regular ring of
prime characteristic that is not $F$-$\cap$-flat.
\end{ex}

\begin{lem}\label{if.3} Let $S$ be a regular ring of characteristic
$p$ which is $F$-$\cap$-flat. Let
$(\fa_{\lambda})_{\lambda\in\Lambda}$ be a non-empty family of
ideals of $S$ and let $n \in \nn$. Then
$\bigcap_{\lambda\in\Lambda}(\fa_{\lambda}^{[p^n]}) =
\left(\bigcap_{\lambda\in\Lambda}\fa_{\lambda}\right)^{[p^n]}$.

Let $\fb$ be an ideal of $S$. Then there exists a smallest ideal
$\ft$ of $S$ such that $\fb \subseteq \ft^{[p^n]}$ (`smallest' in
the sense that $\ft \subseteq \fh$ for each ideal $\fh$ of $S$ for
which $\fb \subseteq \fh^{[p^n]}$).
\end{lem}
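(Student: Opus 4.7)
The plan is to reduce the first identity to the intersection-flatness hypothesis applied to the $n$th iterate $f^n : S \lra S$, via the standard identification of Frobenius powers with tensor products over $S$. Once the first identity is in hand, the second statement follows formally: take $\ft$ to be the intersection of all ideals $\fh$ of $S$ satisfying $\fb \subseteq \fh^{[p^n]}$, and check that $\ft$ itself lies in this family.

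Write $S^{(n)}$ for $S$ regarded as an $(S,S)$-bimodule with its natural left $S$-action and with the right $S$-action pulled back along $f^n$. For any ideal $\fa$ of $S$, the Frobenius power $\fa^{[p^n]}$ is by construction the image of the natural map $S^{(n)}\otimes_S\fa \lra S^{(n)}\otimes_S S = S$. Since $S$ is regular, Kunz's Theorem ensures that $f^n$ is flat, so this map is injective and therefore identifies $S^{(n)}\otimes_S\fa$ with the submodule $\fa^{[p^n]}$ of $S$. The remark in Definitions \ref{if.1} that composites of intersection-flat extensions are intersection-flat shows, by induction on $n$, that $f^n$ is intersection-flat. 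Applying this to the family $(\fa_\lambda)_{\lambda\in\Lambda}$ of submodules of the finitely generated $S$-module $S$ and then rewriting via the identification above gives
\[
\left(\bigcap_{\lambda\in\Lambda}\fa_{\lambda}\right)^{\![p^n]} = S^{(n)}\otimes_S\!\left(\bigcap_{\lambda\in\Lambda}\fa_{\lambda}\right) = \bigcap_{\lambda\in\Lambda}\!\left(S^{(n)}\otimes_S\fa_{\lambda}\right) = \bigcap_{\lambda\in\Lambda}\fa_{\lambda}^{[p^n]},
\]
which is the first claim.

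For the second statement, let $\mathcal{F} := \{\fh \subseteq S : \fb \subseteq \fh^{[p^n]}\}$, a non-empty family since $S \in \mathcal{F}$, and set $\ft := \bigcap_{\fh\in\mathcal{F}}\fh$. Applying the first part to $\mathcal{F}$ yields $\ft^{[p^n]} = \bigcap_{\fh\in\mathcal{F}}\fh^{[p^n]} \supseteq \fb$, so $\ft \in \mathcal{F}$, and by construction $\ft$ is the smallest member. The main point that requires care is the bimodule bookkeeping: one must set up the identification $S^{(n)}\otimes_S\fa \cong \fa^{[p^n]}$ as an equality of submodules of $S$ (and not merely an abstract isomorphism), so that the intersection-flatness conclusion translates literally into the desired equality of ideals. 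Beyond that piece of setup, both statements are essentially formal.
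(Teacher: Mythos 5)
Your proof is correct and follows essentially the same route as the paper: identify $\fa^{[p^n]}$ with the image of $S^{(n)}\otimes_S\fa$ inside $S$ (the paper phrases this via the graded component $Sy^n$ of the Frobenius skew polynomial ring), apply intersection-flatness of $f^n$ to the given family, and obtain the second assertion by intersecting the family $\{\fh : \fb \subseteq \fh^{[p^n]}\}$. The appeal to Kunz's theorem is harmless but redundant, since flatness of $f^n$ is already built into the intersection-flatness hypothesis.
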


\begin{proof} As observed in \ref{if.1}, the $n$th iterate $f^n : S
\lra S$ is intersection-flat. Consider the $n$th component $Sy^n$ of
the Frobenius skew polynomial ring $S[y,f]$ over $S$ (in the
indeterminate $y$), and let $\fa$ be an ideal of $S$. Then, when the
left $S$-module $Sy^n \otimes_SS$ is identified with $S$ in the
natural way, the image of the $S$-monomorphism $Sy^n\otimes_S\fa
\lra Sy^n\otimes_SS$ induced by inclusion becomes identified with
$\fa^{[p^n]}$. The first claim therefore follows from the definition
of intersection-flatness, while the second claim follows from
application of the first claim to the (non-empty) family of ideals
$\fh$ of $S$ for which $\fb \subseteq \fh^{[p^n]}$ (note that $\fb
\subseteq S^{[p^n]}$).
\end{proof}

\begin{defi}\label{if.4} Let $S$ be a regular ring of characteristic
$p$, let $n \in \nn$ and let $\fa$ be an ideal of $S$. We shall say
that {\em $\fa^{[1/p^n]}$ exists\/} if there is a smallest ideal
$\ft$ of $S$ (which will be denoted by $\fa^{[1/p^n]}$) such that
$\fa \subseteq \ft^{[p^n]}$. We shall sometimes refer to
$\fa^{[1/p^n]}$ (when it exists) as the {\em $p^n$th Frobenius root
of $\fa$}.

Note that $\fa^{[1/p^0]}$ always exists, and is equal to $\fa$. If
$S$ is $F$-$\cap$-flat, then $\fa^{[1/p^n]}$ exists for all $n \in
\nn$ and all ideals $\fa$ of $S$ (by Lemma \ref{if.3}).
\end{defi}

The notation $\fa^{[1/p^n]}$  of \ref{if.4} is consistent with that
used by Blickle, M. Mustata and Smith, in \cite[\S 2]{BlMuSm08},
where they used $p^n$th Frobenius roots, in the case of $F$-finite
regular rings of characteristic $p$, in connection with
generalizations of test ideals studied by N. Hara and K. Yoshida
\cite[\S 6]{HarYos03} and Hara and S. Takagi \cite[\S 1]{HarTak04}.
Other places in which $p^n$th Frobenius roots have appeared in
similar $F$-finite contexts include Takagi \cite[Proposition
2.5]{Takag10}, K. Schwede \cite[Remark 3.8]{Schwe09}, and Blickle,
Schwede, Takagi and W. Zhang \cite[Proposition 3.10]{BSTZ10}.

\begin{rmk}\label{if.5} Let $S$ be a regular ring of characteristic
$p$, let $(\fa_{\lambda})_{\lambda\in\Lambda}$ be a non-empty family
of ideals of $S$, and let $(q_{\lambda})_{\lambda\in\Lambda}$ be a
family of powers of $p$ (with non-negative exponents), indexed by
the same set $\Lambda$.

If $\fa_{\lambda}^{[1/q_{\lambda}]}$ exists for all
$\lambda\in\Lambda$, then
$\sum_{\lambda\in\Lambda}\fa_{\lambda}^{[1/q_{\lambda}]}$ is the
smallest ideal $\fh$ of $S$ such that $\fa_{\lambda} \subseteq
\fh^{[q_{\lambda}]}$ for all $\lambda\in\Lambda$.

For example, if $\fa,\fb$ are ideals of $S$ and $n \in \N$ is such
that $\fb^{[1/p^n]}$ exists, then $\fb^{[1/p^n]} + \fa^{[1/p^0]} =
\fb^{[1/p^n]} + \fa$ is the smallest ideal $\ft$ of $S$ which
contains $\fa$ and is such that $\fb \subseteq \ft^{[p^n]}$.
\end{rmk}

\begin{lem}\label{if.6} Let $S$ be a regular ring of characteristic
$p$, let $n \in \nn$, let $\fa$ be an ideal of $S$ and let $W$ be a
multiplicatively closed subset of $S$. If $\fa^{[1/p^n]}$ exists,
then so also does $(\fa W^{-1}S)^{[1/p^n]}$, and $(\fa
W^{-1}S)^{[1/p^n]} = \fa^{[1/p^n]}W^{-1}S$.
\end{lem}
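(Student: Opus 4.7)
The plan is to verify directly that $\fa^{[1/p^n]}W^{-1}S$ satisfies the two requirements that characterize $(\fa W^{-1}S)^{[1/p^n]}$ inside $W^{-1}S$: (a) its $p^n$th Frobenius power contains $\fa W^{-1}S$, and (b) it is contained in every ideal of $W^{-1}S$ with this property. Since Frobenius powers commute with localization, (a) is immediate from $\fa \subseteq (\fa^{[1/p^n]})^{[p^n]}$ by extending to $W^{-1}S$.

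For (b), let $\fu$ be any ideal of $W^{-1}S$ with $\fa W^{-1}S \subseteq \fu^{[p^n]}$, and contract to $\ft := \fu \cap S$, so that $\fu = \ft W^{-1}S$ and $\fu^{[p^n]} = \ft^{[p^n]}W^{-1}S$. Because $S$ is Noetherian, $\fa$ is finitely generated; clearing denominators on a finite generating set yields a single $w \in W$ with $w\fa \subseteq \ft^{[p^n]}$. Multiplying by $w^{p^n-1}$ then gives $w^{p^n}\fa \subseteq \ft^{[p^n]}$, i.e.\ $\fa \subseteq (\ft^{[p^n]} : w^{p^n})$.

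The key input is the identity
\[
\ft^{[p^n]} : w^{p^n} \;=\; (\ft : w)^{[p^n]},
\]
which holds because $S$ is regular: by Kunz's theorem the $n$th iterate $f^n : S \to S$ is flat, so applying this flat base change to the injection $S/(\ft : w) \xrightarrow{\,\cdot w\,} S/\ft$ produces an injection $S/(\ft : w)^{[p^n]} \xrightarrow{\,\cdot w^{p^n}\,} S/\ft^{[p^n]}$, from which the identity follows (the reverse inclusion being obvious). Thus $\fa \subseteq (\ft : w)^{[p^n]}$, so the minimality in the definition of $\fa^{[1/p^n]}$ forces $\fa^{[1/p^n]} \subseteq (\ft : w)$, i.e.\ $w\fa^{[1/p^n]} \subseteq \ft$. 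Hence every generator of $\fa^{[1/p^n]}$ has image in $\ft W^{-1}S = \fu$, and therefore $\fa^{[1/p^n]}W^{-1}S \subseteq \fu$, as required.

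The only point that needs more than bookkeeping is the colon-commutes-with-Frobenius identity above, and this is where the regularity of $S$ is used in an essential way; the rest is just routine manipulation of extensions and contractions under the localization $S \to W^{-1}S$.
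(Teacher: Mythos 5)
Your proof is correct, but it follows a genuinely different route from the paper's. Both you and the paper begin identically: set $\ft := \fa^{[1/p^n]}$, check the inclusion $\fa W^{-1}S \subseteq (\ft W^{-1}S)^{[p^n]}$ immediately, and then, given an ideal $\fu$ of $W^{-1}S$ with $\fa W^{-1}S \subseteq \fu^{[p^n]}$, contract to $S$. From there the arguments diverge. The paper takes a minimal primary decomposition $\fb = \fq_1 \cap \cdots \cap \fq_t$ of the contraction $\fb := \fu\cap S$, invokes flatness of the Frobenius to see that $\fb^{[p^n]} = \fq_1^{[p^n]}\cap\cdots\cap\fq_t^{[p^n]}$ with each $\fq_i^{[p^n]}$ still $\sqrt{\fq_i}$-primary and disjoint from $W$, and then uses the standard behaviour of primary ideals under extension and contraction to deduce $\fa \subseteq \fb^{[p^n]}$. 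You instead avoid primary decomposition entirely: you clear denominators to get a single $w\in W$ with $w\fa\subseteq\fb^{[p^n]}$, bump $w$ up to $w^{p^n}$, and then rely on the identity $\fb^{[p^n]}:w^{p^n}=(\fb:w)^{[p^n]}$, which you obtain by applying the flat Frobenius functor to the inclusion $S/(\fb:w)\hookrightarrow S/\fb$. Both proofs lean on Kunz's theorem, but for different standard consequences of flatness. Your version is somewhat more elementary (no primary decomposition machinery) and arguably more direct; the paper's version is shorter on the manipulations with $w$. Each is complete and rigorous.
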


\begin{proof} Set $\ft := \fa^{[1/p^n]}$; since $\fa \subseteq
\ft^{[p^n]}$, we have $\fa W^{-1}S\subseteq \ft^{[p^n]}W^{-1}S =
(\ft W^{-1}S)^{[p^n]}$.

Now let $\fB$ be an ideal of $W^{-1}S$ such that $\fa
W^{-1}S\subseteq \fB^{[p^n]}$. Let\/ $\phantom{}^e$ and
$\phantom{}^c$ denote extension and contraction of ideals under the
natural ring homomorphism $S \lra W^{-1}S$. Since every ideal of
$W^{-1}S$ is extended from $S$, we can write $\fB = \fb^e$ where
$\fb = \fB^c$. Let $\fb = \fq_1 \cap \ldots \cap \fq_t$ be a minimal
primary decomposition of $\fb$, and note that all of $\fq_1, \ldots,
\fq_t$ are disjoint from $W$. It follows from the faithful flatness
of the Frobenius homomorphism on $S$ that $\fb^{[p^n]} =
\fq_1^{[p^n]} \cap \cdots \cap \fq_t^{[p^n]}$ is a minimal primary
decomposition of $\fb^{[p^n]}$, so that
$$
\fB^{[p^n]}  = (\fb^e)^{[p^n]} = (\fb^{[p^n]})^e =
\left(\fq_1^{[p^n]} \cap \cdots \cap \fq_t^{[p^n]}\right)^e
=\left(\fq_1^{[p^n]}\right)^e \cap \cdots \cap
\left(\fq_t^{[p^n]}\right)^e.
$$
Since $\fq_i^{[p^n]}$ is $\sqrt{\fq_i}$-primary for all $i = 1,
\ldots, t$, we see that
$$
\fa  \subseteq \fa^{ec} \subseteq (\fB^{[p^n]})^c  =
\left(\fq_1^{[p^n]}\right)^{ec} \cap \cdots \cap
\left(\fq_t^{[p^n]}\right)^{ec} = \fq_1^{[p^n]} \cap \cdots \cap
\fq_t^{[p^n]} = \fb^{[p^n]}.
$$
Therefore $\ft \subseteq \fb$ and $\ft^e \subseteq \fb^e = \fB$.  It
follows that $(\fa W^{-1}S)^{[1/p^n]}$ exists and is equal to $\ft^e
= (\fa^{[1/p^n]})^e$.
\end{proof}

We are going to use the following lemma of G. Lyubeznik and K. E.
Smith. Their proof in \cite[Lemma 6.6]{LyuSmi01} uses work of N.
Radu \cite[Corollary 5]{Radu92}, which, in turn, uses D. Popescu's
general N\'eron desingularization \cite{Popes85, Popes86}. (For an
exposition of Popescu's work, see R. G. Swan \cite{Swan95}.)

\begin{lem} [G. Lyubeznik and K. E. Smith {\cite[Lemma 6.6]{LyuSmi01}}]
\label{if.7} Let $S$ be an excellent regular local ring of
characteristic $p$, and let $\fB$ be an ideal of the completion
$\widehat{S}$ of $S$. Then $\fB^{[p^n]}\cap S = (\fB \cap
S)^{[p^n]}$ for all $n \in \nn$.
\end{lem}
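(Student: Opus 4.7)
The inclusion $(\fB \cap S)^{[p^n]} \subseteq \fB^{[p^n]} \cap S$ is immediate: setting $\fb := \fB \cap S$, the ideal $\fb^{[p^n]}$ lies in $S$ and is contained in $\fB^{[p^n]}$. All the substance lies in the reverse inclusion, and my plan is to exploit the fact that, since $S$ is excellent, the inclusion $S \hookrightarrow \widehat{S}$ is a regular ring homomorphism: it is flat, and all its fibres are geometrically regular.

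The key idea is to invoke Popescu's general N\'eron desingularization theorem in order to write $\widehat{S}$ as a filtered direct limit $\varinjlim_\lambda T_\lambda$ of smooth (and hence, in particular, regular) finitely generated $S$-algebras. Given $s \in \fB^{[p^n]} \cap S$, I would write $s = \sum_{i=1}^r b_i^{p^n} \widehat{c}_i$ with $b_i \in \fB$ and $\widehat{c}_i \in \widehat{S}$; this finite expression descends, for some sufficiently large index $\lambda$, to an equation $s = \sum_{i=1}^r \beta_i^{p^n} \gamma_i$ in $T_\lambda$ with $\beta_i \in \fB T_\lambda$ and $\gamma_i \in T_\lambda$. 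Thus $s \in (\fB T_\lambda)^{[p^n]} \cap S$, and it suffices to show that this intersection is contained in $\fb^{[p^n]}$.

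The crux, and the main obstacle, is the migration of the $p^n$th-power factors back from $T_\lambda$ into $S$. Here I would call on Radu's Frobenius criterion for regular homomorphisms of Noetherian rings of characteristic $p$: for such a homomorphism $A \lra B$, the relative Frobenius is faithfully flat, which translates (via an argument with primary decomposition in the spirit of Lemma \ref{if.6}) into the statement that $J^{[p^n]} \cap A = (J \cap A)^{[p^n]}$ for every ideal $J$ of $B$. Applied to the smooth (hence regular) flat extension $S \hookrightarrow T_\lambda$ with $J = \fB T_\lambda$, this gives $(\fB T_\lambda)^{[p^n]} \cap S \subseteq (\fB T_\lambda \cap S)^{[p^n]} \subseteq \fb^{[p^n]}$, which is what I want. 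The genuine difficulty is packaged into Popescu's theorem and Radu's Frobenius-theoretic reformulation of regularity; once these are granted, the remaining work is formal bookkeeping about how a finite sum of products in $\widehat{S}$ descends into one of the smooth approximations $T_\lambda$. This is precisely the strategy of \cite{LyuSmi01}.
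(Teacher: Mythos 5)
The paper does not prove this lemma; it cites Lyubeznik--Smith \cite[Lemma 6.6]{LyuSmi01} and records, in the sentence preceding the statement, that their proof rests on Radu's Corollary 5, which in turn rests on Popescu's general N\'eron desingularization. Your proposal reconstructs that same chain of ideas, and you say so explicitly, so strategically you match the cited source.

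Two things deserve flagging. The translation you describe as ``an argument with primary decomposition in the spirit of Lemma \ref{if.6}'' would not go through: the primary-decomposition argument in \ref{if.6} exploits facts peculiar to localization (every ideal of $W^{-1}S$ is extended from $S$, and a primary ideal disjoint from $W$ contracts back to itself), neither of which holds for $S \hookrightarrow \widehat{S}$ or for $S \hookrightarrow T_\lambda$. What faithful flatness of the relative Frobenius $C := A^{1/p^n}\otimes_A B \lra B^{1/p^n}$ actually gives, by a flat base-change computation (tensor $0 \to J \to B \to B/J \to 0$ with $A^{1/p^n}$ over $A$, then contract along $C \lra B^{1/p^n}$), is the equality $JB^{1/p^n}\cap A^{1/p^n} = (J\cap A)A^{1/p^n}$; raising to $p^n$th powers turns this into $J^{[p^n]}\cap A = (J\cap A)^{[p^n]}$, and that, not a primary-decomposition argument, is what migrates the $p^n$th-power coefficients into $S$. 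There is also a mild organizational redundancy in your outline: if Radu's criterion applied directly to every regular homomorphism, you could apply it to $S \hookrightarrow \widehat{S}$ and skip Popescu entirely; the actual division of labour is that Popescu reduces the regular case to a colimit of smooth $S$-algebras, and it is for smooth (or essentially smooth) maps that flatness of the relative Frobenius is verified by more elementary means. Finally, $\fB T_\lambda$ is a notational slip, since $\fB$ lives in the colimit $\widehat{S}$ and not in $T_\lambda$; one should take the ideal of $T_\lambda$ generated by lifts of a finite generating set of $\fB$, and the point to record is that its contraction to $S$ lands inside $\fB \cap S$.
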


\begin{lem}\label{if.8} Let $S$ be an excellent regular local ring of characteristic
$p$, let $n \in \nn$ and let $\fa$ be an ideal of $S$. If
$\fa^{[1/p^n]}$ exists, then so also does
$(\fa\widehat{S})^{[1/p^n]}$, and $(\fa\widehat{S})^{[1/p^n]} =
\fa^{[1/p^n]}\widehat{S}$.
\end{lem}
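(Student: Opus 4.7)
Set $\ft := \fa^{[1/p^n]}$, which exists by hypothesis. The plan is to verify directly that $\ft\widehat{S}$ satisfies the defining property of $(\fa\widehat{S})^{[1/p^n]}$: that $\fa\widehat{S}$ is contained in its $p^n$th Frobenius power, and that it is contained in any other ideal of $\widehat{S}$ with the analogous property.

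First I would check the containment. Since $\fa \subseteq \ft^{[p^n]}$, extending to $\widehat{S}$ gives
\[
\fa\widehat{S} \subseteq \ft^{[p^n]}\widehat{S} = (\ft\widehat{S})^{[p^n]},
\]
so $\ft\widehat{S}$ is a candidate.

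Next I would establish minimality. Let $\fB$ be any ideal of $\widehat{S}$ with $\fa\widehat{S} \subseteq \fB^{[p^n]}$. Intersecting with $S$ and using the Lyubeznik--Smith Lemma~\ref{if.7}, we obtain
\[
\fa \subseteq \fa\widehat{S} \cap S \subseteq \fB^{[p^n]} \cap S = (\fB \cap S)^{[p^n]}.
\]
By minimality of $\ft = \fa^{[1/p^n]}$ among ideals of $S$ whose $p^n$th Frobenius power contains $\fa$, this forces $\ft \subseteq \fB \cap S$, and hence $\ft\widehat{S} \subseteq \fB$ by extension. This shows $\ft\widehat{S}$ is the smallest ideal of $\widehat{S}$ whose $p^n$th Frobenius power contains $\fa\widehat{S}$, so $(\fa\widehat{S})^{[1/p^n]}$ exists and equals $\fa^{[1/p^n]}\widehat{S}$, as required.

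The main (and indeed only) non-trivial ingredient is the Lyubeznik--Smith identity $\fB^{[p^n]}\cap S = (\fB\cap S)^{[p^n]}$, which in turn rests on Popescu's N\'eron desingularization; everything else is a short contraction/extension argument. No appeal to $F$-$\cap$-flatness of $S$ is needed, because we are only asserting the existence of the specific Frobenius root $\fa^{[1/p^n]}$ and then transporting it to $\widehat{S}$.
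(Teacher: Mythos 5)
Your proof is correct and follows essentially the same route as the paper's: verify that $\ft\widehat{S}$ works, and use the Lyubeznik--Smith identity $\fB^{[p^n]}\cap S = (\fB\cap S)^{[p^n]}$ to prove minimality. The only cosmetic difference is that you use only the trivial inclusion $\fa \subseteq \fa\widehat{S}\cap S$ where the paper invokes the equality $\fa = \fa\widehat{S}\cap S$ (a consequence of faithful flatness), but the inclusion is all that is needed.
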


\begin{proof} Set $\ft := \fa^{[1/p^n]}$; since $\fa \subseteq
\ft^{[p^n]}$, we have $\fa \widehat{S}\subseteq
\ft^{[p^n]}\widehat{S} = (\ft \widehat{S})^{[p^n]}$.

Now let $\fB$ be an ideal of $\widehat{S}$ such that $\fa
\widehat{S}\subseteq \fB^{[p^n]}$. Then, on use of the Lemma
\ref{if.7} of Lyubeznik and Smith, we see that
$$
\fa = \fa \widehat{S} \cap S \subseteq \fB^{[p^n]}\cap S = (\fB \cap
S)^{[p^n]}.$$ Therefore $\ft \subseteq \fB \cap S$, so that
$\ft\widehat{S} \subseteq (\fB \cap S)\widehat{S} \subseteq \fB$. It
follows that $(\fa\widehat{S})^{[1/p^n]}$ exists and is equal to
$\ft\widehat{S} = \fa^{[1/p^n]}\widehat{S}$.
\end{proof}

Use will be made in this section of the following extension, due to
G. Lyubeznik, of a result of R. Hartshorne and R. Speiser. It shows
that, when $R$ is local, an $x$-torsion left $R[x,f]$-module which
is Artinian (that is, `cofinite' in the terminology of Hartshorne
and Speiser) as an $R$-module exhibits a certain uniformity of
behaviour.

\begin{thm} [G. Lyubeznik {\cite[Proposition 4.4]{Ly}}] \label{HSL}
{\rm (Compare also Hartshorne and Speiser \cite[Proposition
1.11]{HS}.)} Suppose that $(R,\m)$ is local and let $H$ be a left
$R[x,f]$-module which is Artinian as an $R$-module. Then there
exists $e \in \nn$ such that $x^e\Gamma_x(H) = 0$.
\end{thm}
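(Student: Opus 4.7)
The overall approach is to reduce, via two standard reductions followed by Matlis duality, to a stabilization question for an ascending chain of submodules inside a finitely generated module, where ACC applies. First, because $\Gamma_x(H)$ is an $R[x,f]$-submodule of the Artinian $R$-module $H$, it too is Artinian as an $R$-module; replacing $H$ by $\Gamma_x(H)$, we may assume $H = \Gamma_x(H)$, and it suffices to produce $e \in \nn$ with $x^e H = 0$. For each $e$, set $K_e := \{h \in H : x^e h = 0\}$; using the relation $x^e r = r^{p^e} x^e$ in $R[x,f]$, each $K_e$ is an $R$-submodule of $H$, and $(K_e)_{e \in \nn}$ is an ascending chain of $R$-submodules with $\bigcup_e K_e = H$. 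The task reduces to showing that this chain stabilizes at some finite $e$.

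Second, since $H$ is Artinian over a Noetherian local ring, it is $\fm$-power torsion, and so inherits a canonical $\widehat{R}$-module structure extending its $R$-module structure. The $x$-action is compatible, giving $H$ a natural $\widehat{R}[x,f]$-module structure with the same submodules $K_e$; hence we may replace $R$ by $\widehat{R}$ and assume $(R,\fm)$ is complete. Third, we invoke Matlis duality: $H$ corresponds to the finitely generated (hence Noetherian) $R$-module $M := \Hom_R(H, E_R(R/\fm))$, and submodules of $H$ correspond bijectively to quotients of $M$. The ascending chain $(K_e)$ on $H$ then corresponds to a descending chain of $R$-submodules $(K_e^\perp)$ of $M$ with $\bigcap_e K_e^\perp = 0$. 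The operator $x$ is Frobenius-semi-linear on $H$, equivalently $R$-linear as a map $x \colon H \to f_*H$ to the Frobenius restriction of scalars; dualizing, one obtains an $R$-linear morphism on $M$ (into its Frobenius twist) whose iterates realize the chain $(K_e^\perp)$. A Nakayama-style argument on the finitely generated $M$, combined with the vanishing intersection, then forces the descending chain to stabilize. Dualizing back yields $K_e = H$ for some $e$, i.e.\ $x^e H = 0$.

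\noindent\emph{Main obstacle.} The trickiest step is the third one: Matlis duality turns the ascending chain on $H$ into a \emph{descending} chain on $M$, and Noetherianity of $M$ alone does not imply DCC, so stabilization must be extracted from the specific iterated-endomorphism structure of the chain. Concretely, one must carefully track the Frobenius twist introduced by dualizing the semi-linear operator $x$, and identify the dual chain as the orbit of a single $R$-linear operator on $M$ (in an appropriately Frobenius-twisted module category), so as to leverage finite generation of $M$ to force termination.
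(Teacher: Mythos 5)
The paper does not prove this theorem; it cites Lyubeznik \cite[Proposition 4.4]{Ly} and points to \cite{HSLonly} for a short self-contained proof in that generality, so there is no in-paper argument to compare against, and your proposal must be judged on its own. Your preliminary reductions are sound: $\Gamma_x(H)$ is an $R[x,f]$-submodule, hence Artinian over $R$; the subsets $K_e := \{h \in H : x^e h = 0\}$ are $R$-submodules (since $x^e(rh) = r^{p^e}x^e h$), ascend, and have union $H$; and $H$ is $\fm$-power torsion, so acquires a unique compatible $\widehat R$-module structure. The gap is the stabilization step, which is the entire content of the theorem. After Matlis-dualizing you have a descending chain $(K_e^{\perp})$ of submodules of the finitely generated module $M := \Hom_R(H, E_R(R/\fm))$ with $\bigcap_e K_e^{\perp} = 0$, and you assert this "forces the descending chain to stabilize" via "a Nakayama-style argument." That inference is invalid as stated: for $R = k[[t]]$ and $M = R$, the descending chain $(t^nR)_{n\geq 0}$ has zero intersection and never stabilizes; finite generation gives ACC, not DCC, and vanishing of the intersection is of no help. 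Stabilization must genuinely be extracted from the Frobenius (dually, Cartier) structure of the operator, which is exactly what Gabber's lemma, the Blickle--B\"ockle finiteness theorem (compare Remark \ref{dualHSL}), or Lyubeznik's $F$-module argument actually provide; none of that appears in your sketch, so the heart of the proof is missing.

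There is also a second difficulty that you flag but do not resolve. To realize $(K_e^{\perp})$ as the image chain of a single operator on $M$, one must transport the $p$-linear map $x\colon H\to H$ across Matlis duality; this requires identifying $\Hom_R(F_*H, E_R(R/\fm))$ with a Frobenius twist of $M$, and such an identification is available only when $R$ is $F$-finite. This is precisely why Remark \ref{dualHSL} restricts the Matlis-duality passage between the HSL theorem and the Blickle--B\"ockle statement to the $F$-finite complete case. A general complete Noetherian local ring of characteristic $p$ need not be $F$-finite, so the route you propose, even once the stabilization lemma is supplied, would not reach the generality claimed by the theorem. Lyubeznik's argument and the short proof in \cite{HSLonly} avoid this by arguing directly on the Artinian side.
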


Hartshorne and Speiser first proved this result in the case where
$R$ is local and contains its residue field which is perfect.
Lyubeznik applied his theory of $F$-modules to obtain the result
without restriction on the local ring $R$ (of characteristic $p$). A
short proof of the theorem, in the generality achieved by Lyubeznik,
is provided in \cite{HSLonly}.

\begin{defi}
\label{hslno} Suppose that $(R,\fm)$ is local, and let $H$ be a left
$R[x,f]$-module which is Artinian as an $R$-module.  By the
Hartshorne--Speiser--Lyubeznik Theorem \ref{HSL}, there exists $e
\in \nn$ such that $x^e\Gamma_x(H) = 0$: we call the smallest such
$e$ the {\em Hartshorne--Speiser--Lyubeznik number\/}, or {\em
HSL-number\/} for short, of $H$, and denote this by $\HSL(H)$.
\end{defi}

\begin{rmk}\label{dualHSL} The referee has suggested that I point
out that there is a result about {\em right\/} $R[x,f]$-modules that
is, in a sense, dual to the Hartshorne--Speiser--Lyubeznik Theorem:
if $M$ is a right $R[x,f]$-module that is Noetherian as an
$R$-module, then there exists $e \in \nn$ such that $Mx^e =
Mx^{e+1}$. This follows from M. Blickle and G. Boeckle
\cite[Proposition 2.14]{BB}; they prove their result via an argument
of O. Gabber \cite[Lemma 13.1]{Gabber}. There is another proof in
\cite[Theorem 3.4]{ShaYos11}. Neither result requires that $R$ be
$F$-finite, but in the case where $(R,\fm)$ is $F$-finite, local and
complete, the Blickle--Boeckle result can be deduced from the
Hartshorne--Speiser--Lyubeznik Theorem via Matlis duality.
\end{rmk}

\begin{prop}\label{if.11} Let $(S,\fn)$ be an excellent regular local ring of characteristic $p$,
and let $\fa$ be a proper ideal of $S$; let $R := S/\fa$, and denote
the maximal ideal of $R$ by $\fm$. Set $E := E_S(S/\fn)$, and let $u
\in (\fa^{[p]}:\fa)$. Use $u$ to furnish $E_R(R/\fm) = (0:_E\fa)$
with a structure as a left $R[x,f]$-module, and recall that
$E_R(R/\fm)$ is Artinian as an $R$-module. Assume that
$(Sdu^j)^{[1/p^n]}$ exists for all $n,j\in\nn$ and $d\in S$. Recall
that $\omega_0 := 0$ and $\omega_n := 1 + p + \cdots + p^{n-1}$ for
$n \in \N$. Let $e \in \nn$. Then

\begin{enumerate}
\item $(0:_{E_R(R/\fm)}x^e) = \left(0:_E(Su^{\omega_e})^{[1/p^e]}+\fa\right)$;
\item for $d \in S$, $$ \left\{h \in E_R(R/\fm) : dx^nh= 0 \text{~for all~} n \geq e\right\} =
{\textstyle \left(0:_E \fa + \sum_{n\geq e}
(Sdu^{\omega_n})^{[1/p^n]}\right)}\mbox{;}$$
\item $(Su^{\omega_n})^{[1/p^n]}+\fa \supseteq
(Su^{\omega_{n+1}})^{[1/p^{n+1}]}+\fa$ for all $n \in \nn$;
\item if $(Su^{\omega_e})^{[1/p^e]}+\fa = (Su^{\omega_{e+1}})^{[1/p^{e+1}]}+\fa$,
then $$(Su^{\omega_e})^{[1/p^e]}+\fa =
(Su^{\omega_{e+j}})^{[1/p^{e+j}]}+\fa \quad \text{for all~} j \in
\N\mbox{; and}$$
\item $\HSL(E_R(R/\fm))$ is equal to the least integer $n \in \nn$
such that $(Su^{\omega_n})^{[1/p^n]}+\fa =
(Su^{\omega_{n+1}})^{[1/p^{n+1}]}+\fa$.
\end{enumerate}
\end{prop}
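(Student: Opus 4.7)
The plan is to reduce all five parts to a single key identity linking the $x$-action on $(0:_E\fa)$ to Frobenius roots in $S$, after which (i)--(v) follow by routine manipulations and a short induction. First I would establish by induction on $n$ the iteration formula
\[ x^n h = u^{\omega_n}\, y^n h \qquad \text{for all } h \in E_R(R/\fm) = (0:_E\fa), \]
using the defining relation $xh = uyh$ on $(0:_E\fa)$, the skew-polynomial identity $xs = s^p x$, and $\omega_{n+1} = 1 + p\omega_n$: indeed $x^{n+1}h = x(u^{\omega_n}y^n h) = u^{p\omega_n}\,x(y^n h) = u^{p\omega_n}\cdot u\, y^{n+1}h$. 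The main technical ingredient is the claim that, for every $s \in S$ for which $(Ss)^{[1/p^e]}$ exists and every $h \in E$,
\[ s\,y^e h = 0 \quad\Longleftrightarrow\quad (Ss)^{[1/p^e]}\,h = 0. \]
The $(\Leftarrow)$ direction is routine: writing $s = \sum_j r_j t_j^{p^e}$ with $t_j \in (Ss)^{[1/p^e]}$, the skew commutation $t_j^{p^e} y^e = y^e t_j$ together with $t_j h = 0$ gives $sy^e h = \sum_j r_j\, y^e(t_j h) = 0$. The $(\Rightarrow)$ direction, which is where regularity of $S$ enters and constitutes the main obstacle, rests on the identity $\ann_S(y^e h) = (\ann_S h)^{[p^e]}$: representing a class $h \in E \cong H^{\dim S}_{\fn}(S)$ in the direct-limit form $h = [s_0/(x_1\cdots x_d)^k]$ gives $y^e h = [s_0^{p^e}/(x_1\cdots x_d)^{p^e k}]$, and the colon-capturing identity $(I:J)^{[p^e]} = (I^{[p^e]}:J^{[p^e]})$ for finitely generated $J$---available because Frobenius on the regular ring $S$ is flat---turns $((x_1^{p^e k},\ldots,x_d^{p^e k}):s_0^{p^e})$ into $((x_1^k,\ldots,x_d^k):s_0)^{[p^e]}$. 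Granted the identity, $sy^eh = 0$ forces $Ss \subseteq (\ann_Sh)^{[p^e]}$, and the definition of the Frobenius root yields $(Ss)^{[1/p^e]} \subseteq \ann_Sh$.

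With this key identity in hand, (i) is immediate since $x^eh = u^{\omega_e}y^eh$ vanishes iff $(Su^{\omega_e})^{[1/p^e]}h = 0$; intersecting with $(0:_E\fa)$ gives the stated equality. Part (ii) follows by applying the identity with $s = du^{\omega_n}$ for each $n \geq e$ and combining the annihilator conditions into the sum $\sum_{n \geq e}(Sdu^{\omega_n})^{[1/p^n]}$. For (iii), I would give a direct ideal-theoretic argument using $\omega_{n+1} = 1 + p\omega_n$: setting $\ft := (Su^{\omega_n})^{[1/p^n]}$ so that $u^{\omega_n} \in \ft^{[p^n]}$, one has $u^{\omega_{n+1}} = u(u^{\omega_n})^p \in u\,\ft^{[p^{n+1}]} \subseteq \ft^{[p^{n+1}]}$, whence $(Su^{\omega_{n+1}})^{[1/p^{n+1}]} \subseteq \ft$; adding $\fa$ yields the stated inclusion.

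For (iv), I would translate the hypothesis, via (i), into the module equality $(0:_{E_R(R/\fm)}x^e) = (0:_{E_R(R/\fm)}x^{e+1})$, which propagates by a trivial induction on $j$: if $x^{e+j+1}h = 0$ then $h' := xh$ satisfies $x^{e+j}h' = 0$, so the inductive hypothesis applied to $h'$ gives $x^{e+1}h = x^e h' = 0$, and the base case applied to $h$ then gives $x^e h = 0$. To convert the resulting module equality back into an ideal equality I use $\ann_S(0:_E J) = J$ for every ideal $J \subseteq S$---valid because $(0:_E J) = (0:_E J\widehat{S})$, Matlis duality over the complete local ring $\widehat{S}$ identifies $J\widehat{S}$ as the $\widehat{S}$-annihilator, and $J\widehat{S} \cap S = J$ by faithful flatness of the completion. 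Finally, (v) is immediate from (i), (iv), and the standard characterization of $\HSL(E_R(R/\fm))$ as the first index at which the ascending chain $(0:_{E_R(R/\fm)}x^n)$ stabilizes.
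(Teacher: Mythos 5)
Your proof is correct, and it takes a genuinely different route from the paper's. The paper passes to the completion $\widehat{S}$ at the outset, uses Matlis duality over $\widehat{S}$ to write $(0:_{E_R(R/\fm)}x^e)$ as $(0:_E\fB)$ for a uniquely determined ideal $\fB$, and then invokes the previously established graded-annihilator formula $\grann_{\widehat{S}[x,f]}(0:_E\fB) = \bigoplus_n (\fB^{[p^n]}:u^{\omega_n})x^n$ (from \cite[Proposition~2.7]{Fpurhastest}) to identify $\fB$ by its minimality property; parts (ii)--(v) follow by the same pattern. You instead establish a single clean equivalence
\[
s\,y^eh = 0 \iff (Ss)^{[1/p^e]}\,h = 0 \qquad (h \in E,\ s \in S),
\]
deriving it from the identity $\ann_S(y^eh) = (\ann_Sh)^{[p^e]}$ via the explicit direct-limit description of $E \cong H^{\dim S}_{\fn}(S)$ together with the flatness of Frobenius on $S$ (which gives $(I:s_0)^{[p^e]} = (I^{[p^e]}:s_0^{p^e})$). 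Once that equivalence is in place, parts (i) and (ii) fall out by combining it with the iteration formula $x^n h = u^{\omega_n}y^nh$ and intersecting with $(0:_E\fa)$, and the completion enters only in part (iv) where you use $\ann_S(0:_EJ) = J$ (justified through $\widehat{S}$ and faithful flatness). What you gain is self-containment: you do not need to black-box the graded annihilator computation from the earlier paper. What the paper's approach gains is that the same Matlis-duality and graded-annihilator framework is reused uniformly in all parts, so the individual parts require less separate argument. Your induction for (iv) and the conversion of the stabilized chain $(0:_{E_R(R/\fm)}x^n)$ back into ideal equalities is sound, and part (v) is the same observation in both proofs. One small presentational point: the phrase ``colon-capturing'' has a specialized meaning in tight closure theory; what you actually use is the commutation of colons with the flat Frobenius base change, which is better referred to as flatness of the Frobenius endomorphism of the regular ring $S$.
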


\begin{note} Parts of this proposition, particularly (iii), (iv) and
(v), were inspired by work of M. Katzman in \cite[Theorem
4.6]{Katzm08}. However, in that cited theorem of Katzman, the local
rings were complete, and that is not necessarily the case here.
\end{note}

\begin{proof} The completion $\widehat{R}$ of $R = S/\fa$ is given
by $\widehat{R} = \widehat{S}/\fa\widehat{S}$. Now the $R$-module
structure on $E_R(R/\fm)$ can be extended to a structure as an
$\widehat{R}$-module; as such, $E_R(R/\fm) =
E_{\widehat{R}}(\widehat{R}/\fm\widehat{R})$. Note that $u \in
(\fa^{[p]}:\fa)\widehat{S} = ((\fa\widehat{S})^{[p]}:
\fa\widehat{S})$: if we use $u$ to put a left
$\widehat{R}[x,f]$-module structure on
$E_{\widehat{R}}(\widehat{R}/\fm\widehat{R})$, then this structure
extends the left $R[x,f]$-module structure on $E_R(R/\fm)$.

We can use Lemma \ref{if.8} to see that
$(\widehat{S}du^{j})^{[1/p^n]}$ exists and is equal to
$(Sdu^j)^{[1/p^n]}\widehat{S}$, for all $n,j\in\nn$ and $d\in S$.

It follows from Matlis duality (see \cite[p.\ 154]{SV}, for example)
that each $\widehat{S}$-submodule $M$ of $E$ satisfies $M =
(0:_E(0:_{\widehat{S}}M))$, and so has the form $(0:_E\fA)$ for a
(uniquely determined) ideal $\fA$ of $\widehat{S}$.

(i) Now $(0:_{E_R(R/\fm)}x^e) =
(0:_{E_{\widehat{R}}(\widehat{R}/\fm\widehat{R})}x^e)$ is an
$\widehat{S}[x,f]$-submodule of $E$, and so has the form $(0:_E\fB)$
for an ideal $\fB$ of $\widehat{S}$ that contains $\fa\widehat{S}$.
It follows from \cite[Proposition 2.7]{Fpurhastest} that
$\grann_{\widehat{S}[x,f]}(0:_E\fB) =
\bigoplus_{n\in\nn}(\fB^{[p^n]}:u^{\omega_n})x^n.$ We therefore
deduce that $\fB$ is the smallest ideal of $\widehat{S}$ such that
$\fa \widehat{S} \subseteq \fB$ and $u^{\omega_e} \in \fB^{[p^e]}$,
that is,
$$\fB = (\widehat{S}u^{\omega_{e}})^{[1/p^{e}]} + \fa\widehat{S} =
(Su^{\omega_e})^{[1/p^e]}\widehat{S} + \fa\widehat{S} =
((Su^{\omega_e})^{[1/p^e]} + \fa)\widehat{S}.$$ Therefore
$(0:_{E_R(R/\fm)}x^e) = (0:_E((Su^{\omega_e})^{[1/p^e]} +
\fa)\widehat{S}) = (0:_E(Su^{\omega_e})^{[1/p^e]} + \fa)$.

(ii) The proof of this is similar to the above proof of part (i).

Let $ L := \left\{h \in E_R(R/\fm) : dx^nh= 0 \text{~for all~} n
\geq e\right\}$. Then $L$ is an $\widehat{S}[x,f]$-submodule of $E$,
and so has the form $(0:_E\fC)$ for an ideal $\fC$ of $\widehat{S}$
that contains $\fa\widehat{S}$. By \cite[Proposition
2.7]{Fpurhastest} again, $\grann_{\widehat{S}[x,f]}(0:_E\fC) =
\bigoplus_{n\in\nn}(\fC^{[p^n]}:u^{\omega_n})x^n,$ so that $\fC$ is
the smallest ideal of $\widehat{S}$ such that $\fa \widehat{S}
\subseteq \fC$ and $d \in (\fC^{[p^n]}:u^{\omega_n})$ for all $n
\geq e$. Thus, in view of Remark \ref{if.5},
\begin{align*}
\fC &=  \fa\widehat{S} + {\textstyle \sum_{n\geq e}
(\widehat{S}du^{\omega_n})^{[1/p^n]}}\\ & = \fa\widehat{S} +
{\textstyle \sum_{n\geq e} (Sdu^{\omega_n})^{[1/p^n]}\widehat{S}} =
{\textstyle \left(\fa + \sum_{n\geq e}
(Sdu^{\omega_n})^{[1/p^n]}\right)}\widehat{S}.
\end{align*}
Therefore $L = {\textstyle \left(0:_E \left(\fa + \sum_{n\geq e}
(Sdu^{\omega_n})^{[1/p^n]}\right)\widehat{S}\right)} = {\textstyle
\left(0:_E \fa + \sum_{n\geq e} (Sdu^{\omega_n})^{[1/p^n]}\right)}$,
as required.

(iii) Let $n \in \nn$ and $\ft := (Su^{\omega_n})^{[1/p^n]}$. Thus
$u^{\omega_n} \in \ft^{[p^n]}$. Therefore $u^{\omega_np} \in
\ft^{[p^{n+1}]}$, so that $u^{\omega_{n+1}} = uu^{\omega_np} \in
\ft^{[p^{n+1}]}$. Hence $\ft \supseteq
(Su^{\omega_{n+1}})^{[1/p^{n+1}]}$.

(iv) Suppose that $(Su^{\omega_e})^{[1/p^e]}+\fa =
(Su^{\omega_{e+1}})^{[1/p^{e+1}]}+\fa$. By part (i), this implies
that $(0:_{E_R(R/\fm)}x^e) = (0:_{E_R(R/\fm)}x^{e+1})$. It follows
from this that $(0:_{E_R(R/\fm)}x^e) = (0:_{E_R(R/\fm)}x^{e+j})$ for
all $j\in \N$, so that $(Su^{\omega_e})^{[1/p^e]}+\fa =
(Su^{\omega_{e+j}})^{[1/p^{e+j}]}+\fa$ for all $j \in \N$ by part
(i) (since an $S$-module and its Matlis dual have equal
annihilators, even if $S$ is not complete).

(v) Let $h := \HSL(E_R(R/\fm))$; this means that
$\Gamma_x(E_R(R/\fm)) = (0:_{E_R(R/\fm)}x^h) =
(0:_{E_R(R/\fm)}x^{h+j})$ for all $j \in \N$, and that
$(0:_{E_R(R/\fm)}x^h) \supset (0:_{E_R(R/\fm)}x^{h-1})$ if $h > 0$.
(The symbol `$\supset$' is reserved to denote strict containment.)
It now follows from parts (i), (ii) and (iii) (and the fact that an
$S$-module and its Matlis dual have equal annihilators) that
$(Su^{\omega_{h}})^{[1/p^{h}]}+\fa =
(Su^{\omega_{h+j}})^{[1/p^{h+j}]}+\fa$ for all $j \in \N$, whereas
$(Su^{\omega_n})^{[1/p^n]}+\fa \supset
(Su^{\omega_{n+1}})^{[1/p^{n+1}]}+\fa$ for all $n \in\nn$ with $n <
h$.
\end{proof}

We can use Proposition \ref{if.11} to establish a bound on
$\HSL$-numbers in some circumstances.

\begin{thm}\label{if.12} Suppose that $R$ is a homomorphic image
$S/\fa$ of an excellent regular ring $S$ (of characteristic $p$)
modulo a proper ideal $\fa$, and assume that $S$ is $F$-$\cap$-flat.
Let $u \in (\fa^{[p]}:\fa)$.

Given a $\fq \in \Var(\fa)$, write $\fp = \fq/\fa$, let $k(\fp)$
denote the simple $R_{\fp}$-module and use $u/1 \in
(\fa^{[p]}:\fa)S_{\fq} = ((\fa S_{\fq})^{[p]}:\fa S_{\fq})$ to
furnish $E_{R_{\fp}}(k(\fp))$ with a structure as a left
$R_{\fp}[x,f]$-module.

Then there exists $h \in \N$ such that $\HSL(E_{R_{\fp}}(k(\fp)))
\leq h$ for all\/ $\fp \in \Spec (R)$.
\end{thm}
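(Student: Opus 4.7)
The plan is to manufacture the uniform bound $h$ from a single descending chain of ideals in $S$ itself, and then transport it to every localization $S_{\fq}$ via Lemma \ref{if.6}. First I would consider the chain
$$\ft_0 + \fa \supseteq \ft_1 + \fa \supseteq \cdots \supseteq \ft_n + \fa \supseteq \cdots, \qquad \ft_n := (Su^{\omega_n})^{[1/p^n]};$$
these Frobenius roots all exist because $S$ is $F$-$\cap$-flat (Lemma \ref{if.3}), and the containments follow by the argument given in the proof of Proposition \ref{if.11}(iii), which is purely formal and does not use locality: from $u^{\omega_n} \in \ft_n^{[p^n]}$ one obtains $u^{\omega_{n+1}} = u \cdot u^{p\omega_n} \in \ft_n^{[p^{n+1}]}$, whence $\ft_{n+1} \subseteq \ft_n$. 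Since $S$ is Noetherian, the chain stabilizes, and I would pick $h \in \nn$ with $\ft_h + \fa = \ft_{h+1} + \fa$.

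Next, for an arbitrary $\fq \in \Var(\fa)$ and $\fp := \fq/\fa$, I would localize at $\fq$. The local ring $(S_{\fq}, \fq S_{\fq})$ is excellent and regular, and Lemma \ref{if.6} says that the Frobenius root operation commutes with localization, so $(S_{\fq} d u^j)^{[1/p^n]} = (S d u^j)^{[1/p^n]} S_{\fq}$ exists for all $n, j \in \nn$ and $d \in S$. Consequently the hypotheses of Proposition \ref{if.11} are satisfied in $S_{\fq}$ for the ideal $\fa S_{\fq}$ and the element $u/1 \in ((\fa S_{\fq})^{[p]} : \fa S_{\fq})$, and extending the stabilization $\ft_h + \fa = \ft_{h+1} + \fa$ to $S_{\fq}$ yields
$$(S_{\fq}(u/1)^{\omega_h})^{[1/p^h]} + \fa S_{\fq} = (S_{\fq}(u/1)^{\omega_{h+1}})^{[1/p^{h+1}]} + \fa S_{\fq}.$$
Proposition \ref{if.11}(v), applied inside $S_{\fq}$, identifies $\HSL(E_{R_{\fp}}(k(\fp)))$ with the least $n$ at which this localized chain becomes stationary, and the display above forces that least index to be at most $h$.

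There is no substantial obstacle in this approach. The only thing requiring care is that the hypothesis of Proposition \ref{if.11} passes from $S$ to $S_{\fq}$, and this is precisely the content of Lemma \ref{if.6} combined with the $F$-$\cap$-flatness of $S$. The conceptual point is simply that the integer $h$ is produced from a chain in $S$ \emph{before} any localization is performed, so its independence from $\fp$ is built in from the outset.
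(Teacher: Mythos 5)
The proposal has a genuine gap: you claim that ``since $S$ is Noetherian, the chain stabilizes,'' but the chain $\ft_0 + \fa \supseteq \ft_1 + \fa \supseteq \cdots$ is a \emph{descending} chain of ideals, and Noetherianness only gives the \emph{ascending} chain condition. Descending chains in a Noetherian ring need not stabilize (consider $(x) \supset (x^2) \supset (x^3) \supset \cdots$ in $\K[x]$). So the integer $h$ you want to manufacture directly in $S$ is not available by this route, and the rest of the argument collapses, because it is precisely this global stabilization that you were relying upon.

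The paper circumvents exactly this difficulty by working with the closed sets $C_n := \Supp(\ft_n/\ft_{n+1}) \subseteq \Var(\fa)$ rather than with the ideals themselves. One shows $C_n \supseteq C_{n+1}$ (this uses the local statement Proposition~\ref{if.11}(iv), that once the chain stabilizes at one index in a localization it stabilizes forever); and one shows $\bigcap_{n\in\nn} C_n = \emptyset$, which requires the Hartshorne--Speiser--Lyubeznik Theorem~\ref{HSL} applied in each local ring $R_{\fq/\fa}$ to guarantee that the chain $(\ft_n S_{\fq})_n$ eventually becomes stationary for each individual $\fq$ --- even though it need not stabilize globally in $S$. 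Quasi-compactness of $\Var(\fa)$ then forces some $C_h$ to be empty, and this $h$ is the uniform bound. So the point-by-point stabilization plus quasi-compactness is the substitute for the global chain stabilization you assumed, and this is where the real content of the theorem lies.
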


\begin{note} I am very grateful to Mordechai Katzman for pointing
out to me that the quasi-compactness of $\Var(\fa)$ in the Zariski
topology on $\Spec(S)$ can be used to prove this theorem.
\end{note}

\begin{proof} It follows from Lemma \ref{if.3} that $\fb^{[1/p^n]}$
exists for all $n\in\nn$ and all ideals $\fb$ of $S$. Lemma
\ref{if.6} shows that $(\fb S_{\fq})^{[1/p^n]}$ exists and is equal
to $\fb^{[1/p^n]}S_{\fq}$ for all $n\in\nn$, $\fq \in \Spec(S)$ and
ideals $\fb$ of $S$.

Set $\ft_n := (Su^{\omega_n})^{[1/p^n]}+\fa$ (where $\omega_n$ is as
in \ref{jun.3}) for all $n \in \nn$. One checks easily that $\ft_n
\supseteq \ft_{n+1}$ by use of the argument in the proof of
Proposition \ref{if.11}(iii). It follows from the first paragraph of
this proof and Proposition \ref{if.11} that, for a $\fq \in
\Var(\fa)$, if $\ft_nS_{\fq} = \ft_{n+1}S_{\fq}$ for an $n \in \nn$,
then $\ft_nS_{\fq} = \ft_{n+j}S_{\fq}$ for all $j\in\N$, and that
$\HSL(E_{R_{\fq/\fa}}(k(\fq/\fa)))$ is the smallest integer $h'$
such that $\ft_{h'}S_{\fq} = \ft_{h'+1}S_{\fq}$.

For each $n\in\nn$, let $C_n := \Supp(\ft_n/\ft_{n+1})$, a closed
subset of $\Var(\fa)$ in the Zariski topology. The facts listed in
the immediately preceding paragraph show that $C_n \supseteq
C_{n+1}$ for all $n\in\nn$, and that $\bigcap_{n\in\nn}C_n =
\emptyset$. Since $\Var(\fa)$ is quasi-compact, there exists $h \in
\N$ such that $C_h = \emptyset$. Then $\ft_{h}S_{\fq} =
\ft_{h+1}S_{\fq}$ for all $\fq \in \Var(\fa)$, so that, by
Proposition \ref{if.11}(iv),(v), we have $\HSL(E_{R_{\fp}}(k(\fp)))
\leq h$ for all $\fp \in \Spec (R)$.
\end{proof}

\section{\it Big test elements for certain non-local $(R_0)$ rings}
\label{nlrn}

The aims of this section are to establish the existence of big test
elements for $R$ when $R$ satisfies condition $(R_0)$ and is a
homomorphic image of an excellent regular ring of characteristic $p$
that is $F$-$\cap$-flat, and then to use faithfully flat ring
extensions to extend the scope of this approach. The proof will make
use of the bound on $\HSL$-numbers established in Theorem
\ref{if.12}.

\begin{lem}\label{nlrn.1} Suppose that $(R,\fm)$ is local and that
$H$ is a left $R[x,f]$-module that is Artinian as an $R$-module. Let
$c \in R$ and, for $m, t \in \nn$ with $m \geq t \geq \HSL(H)$, set
$L_{m,t} := \left\{h \in H : c^{p^m}x^nh= 0 \text{~for all~} n \geq
t\right\}$. Then $L_{m,t} = L_{m+1,t} = L_{m+1,t+1}$.

Consequently, $\left(0:_R\ann_H(\bigoplus_{n\geq
t}Rc^{p^{m+j}}x^n)\right) = \left(0:_R\ann_H(\bigoplus_{n\geq
t}Rc^{p^{m}}x^n)\right)$ for all $j \in \nn$.
\end{lem}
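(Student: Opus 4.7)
The plan is to prove the chain $L_{m,t} = L_{m+1,t} = L_{m+1,t+1}$ first, and then deduce the ``Consequently'' clause. Observe that
\[
L_{m,t} \;=\; \ann_H\!\left({\textstyle \bigoplus_{n \geq t}} R c^{p^m} x^n\right),
\]
so once $L_{m,t} = L_{m+1,t}$ is known, iterating $j$ times (which stays within the standing hypotheses, since $m+j \geq t$) yields $L_{m,t} = L_{m+j,t}$, and the claimed equality of $R$-annihilators follows by taking $R$-annihilators throughout. Among the inclusions making up the first claim, $L_{m,t}\subseteq L_{m+1,t}$ (because $c^{p^{m+1}} = c^{(p-1)p^m}\cdot c^{p^m}$) and $L_{m+1,t}\subseteq L_{m+1,t+1}$ are trivial, so the substantive work lies in proving $L_{m+1,t}\subseteq L_{m,t}$ and $L_{m+1,t+1}\subseteq L_{m+1,t}$.

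For both, the common device is to realise the candidate element $y$ I wish to vanish in the form $y = x^k z$ with $k \geq t$ and $z \in \Gamma_x(H)$; the Hartshorne--Speiser--Lyubeznik bound $x^t\Gamma_x(H) = 0$, available because $t \geq \HSL(H)$, then forces $y = x^{k-t}(x^t z) = 0$. The rewriting is produced by the commutation identities $c^{p^a} x^b = x^b c^{p^{a-b}}$ (valid when $a \geq b$) and $c^{p^a} x^b = x^a \cdot c \cdot x^{b-a}$ (valid when $b > a$), both obtained by iterating $xr = r^p x$. The membership $z \in \Gamma_x(H)$ is, in each case, extracted by applying $x$ on the left to $y$ and invoking the hypothesis at the next degree.

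Concretely, for $L_{m+1,t}\subseteq L_{m,t}$: given $h\in L_{m+1,t}$ and $n\geq t$, put $y := c^{p^m}x^n h$, so that $xy = c^{p^{m+1}}x^{n+1}h = 0$ by hypothesis. If $n \leq m$, write $y = x^n z$ with $z := c^{p^{m-n}} h$; then $x^{n+1}z = xy = 0$, so $z\in\Gamma_x(H)$, whence $y = x^{n-t}(x^t z) = 0$. If $n > m$, write $y = x^m w$ with $w := c\, x^{n-m}h$, and the argument is identical, using $m \geq t$. For $L_{m+1,t+1}\subseteq L_{m+1,t}$, apply the same recipe to $y := c^{p^{m+1}}x^t h = x^t z'$ with $z' := c^{p^{m+1-t}}h$; the input $xy = 0$ is obtained from $c^{p^{m+1}}x^{t+1}h = 0$ together with $c^{p^{m+2}} = c^{(p-1)p^{m+1}}\cdot c^{p^{m+1}}$. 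The only conceptual obstacle is spotting that the naive chain ``$xy = 0 \Rightarrow y\in\Gamma_x(H) \Rightarrow x^t y = 0$'' stops one step short of $y = 0$; the remedy is to apply HSL to the \emph{inner} factor $z$, exploiting the fact that $y$ itself is already an $x^k$-multiple with $k \geq t$.
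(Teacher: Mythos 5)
Your proof is correct and uses the same core mechanism as the paper's: pull a left factor of $x^t$ out of $c^{p^a}x^b h$ and apply the HSL bound $x^t\Gamma_x(H)=0$ to the inner element. The paper establishes the single nontrivial inclusion $L_{m+1,t+1}\subseteq L_{m,t}$ in one computation, while you split it into two reverse inclusions; note that your case split on $n\leq m$ versus $n>m$ in proving $L_{m+1,t}\subseteq L_{m,t}$ is avoidable, since the uniform rewriting $c^{p^m}x^n h = x^t\bigl(c^{p^{m-t}}x^{n-t}h\bigr)$, valid for all $n\geq t$ because $m\geq t$, handles both cases at once.
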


\begin{proof} It is clear that $L_{m,t} \subseteq L_{m+1,t} \subseteq
L_{m+1,t+1}$, so let $h \in L_{m+1,t+1}$. Thus, for all $n \geq
t+1$, we have
$$
0 = c^{p^{m+1}}x^nh = (c^{p^{m-t}})^{p^{t+1}}x^{t+1}x^{n-(t+1)}h =
x^{t+1}c^{p^{m-t}}x^{n-(t+1)}h.
$$
Therefore $x^{t}c^{p^{m-t}}x^{n-(t+1)}h = 0$, since $t \geq
\HSL(H)$; therefore $(c^{p^{m-t}})^{p^{t}}x^{t}x^{n-(t+1)}h = 0$,
that is, $c^{p^m}x^{n-1}h= 0$. As this is true for all $n \geq t+1$,
we see that $h \in L_{m,t}$.

The final claim is now immediate.
\end{proof}

\begin{thm}
\label{nlrn.2} Suppose that $R$ is a homomorphic image $S/\fa$ of an
excellent regular ring $S$ of characteristic $p$ modulo a proper
ideal $\fa$, and assume that $S$ is $F$-$\cap$-flat and that $R$
satisfies condition $(R_0)$. Then $R$ has a big test element.

In fact, each $c \in R^{\circ}$ for which $R_c$ is Gorenstein and
weakly $F$-regular has a power that is a big test element for $R$.
\end{thm}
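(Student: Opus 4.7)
The plan combines the local result of Theorem \ref{jan.17gf} with the uniform $\HSL$-bound from Theorem \ref{if.12}. First, by Proposition \ref{tc.postant}(iii), it suffices to treat the case where $R$ is a domain: each $R/\fp_i$ (for a minimal prime $\fp_i$) is again a homomorphic image of $S$, satisfies $(R_0)$, and $(R/\fp_i)_{\overline{c}}$ is a ring of fractions of $R_c$, hence Gorenstein and weakly $F$-regular by \ref{tcjalg.1}(iv). So I assume $\fa$ is prime.

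For each $\fm \in \Max(R)$, the localization $R_{\fm}$ is excellent, local, satisfies $(R_0)$, and $(R_{\fm})_{c/1}$ is Gorenstein and weakly $F$-regular, so by Theorem \ref{jan.17gf} some power of $c/1$ is a big test element for $R_{\fm}$. Combining Theorem \ref{pl.5c} with Proposition \ref{feb.1}(iv) yields $c \in \sqrt{(0:_R \Delta_R(R[x,f]\otimes_R E_R(R/\fm)))}$ for every $\fm$. Taking the injective cogenerator $E := \bigoplus_{\fm \in \Max(R)} E_R(R/\fm)$, one has $(0:_R \Delta_R(R[x,f]\otimes_R E)) = \bigcap_{\fm}(0:_R \Delta_R(R[x,f]\otimes_R E_R(R/\fm)))$, so producing a single power $c^N$ lying in every ideal on the right will conclude the proof via Theorem \ref{pl.5c}.

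To obtain this uniformity, I apply Proposition \ref{jun.1} with $\fq := \fa$ (valid since $\Quot(R)$ is $F$-pure) to get $u_1, \ldots, u_l \in (\fa^{[p]}:\fa)\setminus\fa^{[p]}$ generating $(\fa^{[p]}:\fa)/\fa^{[p]}$. For each $\fp \in \Spec(R)$ and each $i \in \{1,\ldots,l\}$, I use $u_i$ to furnish $E_{R_{\fp}}(k(\fp))$ with an $R_{\fp}[x,f]$-module structure via \ref{jun.5}(i); Theorem \ref{if.12} applied to each $u_i$ produces an integer $h_i$ bounding $\HSL$, and taking $h := \max_i h_i$ gives a bound uniform in both $\fp$ and $i$. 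Since the proof of Theorem \ref{jan.17gf} selects some $u_i$ per $\fp$ via which to verify the big test element property, combining the uniform $\HSL$-bound $h$ with Lemma \ref{nlrn.1} (which stabilizes annihilators once exponents exceed $\HSL$), the change-of-rings identity of Proposition \ref{feb.1}(iv), and the $\Gamma_x$-to-$\Delta$ passage of Theorem \ref{jan.8} and Corollary \ref{jul.26}, one extracts a uniform $N$ so that $c^N$ annihilates $\Delta_R(R[x,f]\otimes_R E_R(R/\fm))$ for every $\fm$ simultaneously. The main obstacle is precisely this uniformity step: the local arguments each produce their own power $c^{N_{\fm}}$ without any a priori bound, and collapsing the family $(N_{\fm})_{\fm \in \Max(R)}$ into a single global exponent requires the $F$-$\cap$-flatness hypothesis (through Theorem \ref{if.12}) together with Lemma \ref{nlrn.1}'s stabilization.
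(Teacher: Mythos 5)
Your reduction to the domain case via Proposition \ref{tc.postant} and your identification of the goal---a single power $c^N$ lying in $(0:_R\Delta_R(R[x,f]\otimes_RE_R(R/\fm)))$ for every maximal ideal $\fm$---match the paper exactly. But the crucial uniformity step, which you yourself flag as ``the main obstacle,'' is not actually carried out, and the mechanism you gesture at does not work as stated.

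The structural problem is that Theorem \ref{jan.17gf} cannot be applied as a black box here. Its proof passes to $\widehat{R_{\fm}}$ and invokes Cohen's Structure Theorem, which produces an entirely new regular presentation at each $\fm$; the elements $u_1,\ldots,u_l \in (\fa^{[p]}:\fa)$ obtained from the fixed global $S$ play no role in that proof. So the assertion ``the proof of Theorem \ref{jan.17gf} selects some $u_i$ per $\fp$'' is false, and consequently there is no bridge connecting the uniform $\HSL$-bound $h$ of Theorem \ref{if.12} (which lives in the global presentation $S/\fa$ and its localizations) to the exponents appearing in the local applications of Theorem \ref{jan.17gf}. Neither Corollary \ref{jul.26} nor Theorem \ref{jan.8} translates an $\HSL$-bound into a bound on the nilpotency index of $\sqrt{(0:_{R_{\fm}}\Gamma_x(\cdot))}$ modulo $(0:_{R_{\fm}}\Gamma_x(\cdot))$, which is what you would need for your route to close.

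What the paper does instead is not to uniformize a family of powers $c^{N_{\fm}}$, but to construct a single globally defined ideal and show it sits inside the big test ideal. One fixes a single $u$ (one of the $u_i$, chosen via \ref{jun.5}(iii) so that $u/1$ generates the cyclic $S_{\fq}$-module $((\fa S_{\fq})^{[p]}:\fa S_{\fq})/(\fa S_{\fq})^{[p]}$ at the preimage $\fq$ of the chosen Gorenstein, weakly $F$-regular prime $\fp$), picks $c'' \in R^{\circ}$ with $R_{c''}$ regular (so that Theorem \ref{jan.17}, not \ref{jan.17gf}, suffices locally) with lift $d \in S$, and defines $\ft := \fa + \sum_{n\ge h}(Sd^{p^h}u^{\omega_n})^{[1/p^n]}$ using Frobenius roots---this is exactly where the $F$-$\cap$-flatness of $S$ is spent, via Lemmas \ref{if.3} and \ref{if.6}. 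Proposition \ref{if.11}(ii) identifies $(0:_{E_{S_{\fq'}}(k(\fq'))}\ft S_{\fq'})$ with the common stable annihilator, Lemma \ref{nlrn.1} shows that stabilization has already occurred once the exponent of $c''$ exceeds $p^h$, and the Embedding Theorem \ref{pl.5b} together with Proposition \ref{feb.1}(iv) then pushes $(\ft/\fa)R_{\fp'} \subseteq (0:_{R_{\fp'}}\Delta_{R_{\fp'}}(\cdots))$ for every $\fp'$ simultaneously. Finally, the simplicity of $E_{R_{\fp}}(k(\fp))$ as an $R_{\fp}[x,f]$-module (Proposition \ref{hrlr.12}) forces $(\ft/\fa)R_{\fp} = R_{\fp}$, showing the big test ideal escapes every prime of the Gorenstein, weakly $F$-regular locus and hence contains some power of $c$. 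Your proposal has most of the right ingredients but is missing the construction of $\ft$, which is the actual device that delivers the global uniformity you correctly identify as the crux.
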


\begin{proof} We can assume that $\fa \neq 0$, since otherwise the
claims are clear (by Corollary \ref{may.2}, for example). By
Proposition \ref{tc.postant}, it is enough for us to prove the final
claim under the additional hypothesis that $R$ is an integral
domain, and so we suppose that $\fa$ is a non-zero prime ideal of
$S$.

Choose a $\fq \in \Var(\fa)$ such that $R_{\fq/\fa}$ is Gorenstein
and weakly $F$-regular (and therefore $F$-pure by
\ref{tcjalg.1}(i)). (For example, one could take $\fq = \fa$.)
Choose $u_1, \ldots, u_l \in (\fa^{[p]}:\fa) \setminus \fq^{[p]}$
(whose natural images in $T:= (\fa^{[p]}:\fa)/\fa^{[p]}$ generate
$T$) as in Proposition \ref{jun.1}. Since $S_{\fq}/\fa S_{\fq} \cong
R_{\fq/\fa}$ is a Gorenstein local ring, it follows from
\ref{jun.5}(iii) that the $S_{\fq}$-module $\left((\fa
S_{\fq})^{[p]} : \fa S_{\fq}\right)/(\fa S_{\fq})^{[p]}$ is cyclic,
so there is $i \in \{1,\ldots,l\}$ such that $ \left((\fa
S_{\fq})^{[p]} : \fa S_{\fq}\right) =  (\fa S_{\fq})^{[p]} +
(u_i/1)S_{\fq}. $ Let $u := u_i$.

Given a general $\fq' \in \Var(\fa)$, write $\fp' = \fq'/\fa$, let
$k(\fp')$ (respectively $k(\fq')$) denote the simple
$R_{\fp'}$-module (respectively the simple $S_{\fq'}$-module) and
use $u/1 \in (\fa^{[p]}:\fa)S_{\fq'} = ((\fa S_{\fq'})^{[p]}:\fa
S_{\fq'})$ to furnish $E_{R_{\fp'}}(k(\fp')) =
(0:_{E_{S_{\fq'}}(k(\fq'))}\fa S_{\fq'})$ with a structure as a left
$R_{\fp'}[x,f]$-module. Let $h \in \N$ be such that
$\HSL(E_{R_{\fp'}}(k(\fp'))) \leq h$ for all $\fp' \in \Spec (R)$.
(Theorem \ref{if.12} guarantees the existence of such an $h$.)

Let $c'' \in R^{\circ} = R \setminus \{0\}$ be such that $R_{c''}$
is regular. Choose $d \in S$ whose natural image in $R$ is equal to
$c''$. Thus $d \not\in\fa$. It follows from Lemma \ref{if.3} that
$\fb^{[1/p^n]}$ exists for all $n\in\nn$ and all ideals $\fb$ of
$S$. Let $\ft := \fa + \sum_{n\geq h}
(Sd^{p^h}u^{\omega_n})^{[1/p^n]}$. The comments in Remark \ref{if.5}
show that $\ft$ is the smallest ideal of $S$ such that $\fa
\subseteq \ft$ and $d^{p^h}u^{\omega_n} \in \ft^{[p^n]}$ for all $n
\geq h$. We suppose that $\height_R(\ft/\fa) = 0$, that is, that
$\ft = \fa$, and seek a contradiction.

Our supposition means that $d^{p^h}u^{\omega_n} \in \fa^{[p^n]}$ for
all $n \geq h$. It follows from the faithful flatness of the
Frobenius homomorphism on $S$ that the ideal $\fa^{[p^n]}$ is
$\fa$-primary for all $n \in \N$. Since $d \not\in \fa$, we see that
$u^{\omega_n} \in \fa^{[p^n]}$ for all $n \geq h$. Now $u \in
(\fa^{[p]}:\fa)$; therefore $u \in \fa^{[p]}$, by Lemma \ref{jun.2}.
This is a contradiction to the choice of $u$. We have thus proved
that $\height_R(\ft/\fa)
> 0$.

By Lemma \ref{if.6}, we have $\ft S_{\fq'} = \fa S_{\fq'} +
\sum_{n\geq h} (S_{\fq'}(d/1)^{p^h}(u/1)^{\omega_n})^{[1/p^n]}$,
and, by Remark \ref{if.5}, this is the smallest ideal $\fB$ of
$S_{\fq'}$ such that $\fa S_{\fq'} \subseteq \fB$ and
$(d/1)^{p^h}(u/1)^{\omega_n} \in \fB^{[p^n]}$ for all $n \geq h$. By
Proposition \ref{if.11}(ii), we have
$$
\left\{\xi \in E_{R_{\fp'}}(k(\fp')) : (c''/1)^{p^h}x^n\xi= 0
\text{~for all~} n \geq h\right\} = \left(0:_{E_{S_{\fq'}}(k(\fq'))}
\ft S_{\fq'}\right).
$$

We now apply the Embedding Theorem \ref{pl.5b} to the
$R_{\fp'}$-module $E_{R_{\fp'}}(k(\fp'))$, with the graded companion
$\widetilde{E_{R_{\fp'}}(k(\fp'))}$ playing the r\^ole of $H$. The
conclusion is that there is a family $\left(L^{(n)}\right)_{n \in
\nn}$ of\/ $\nn$-graded left $R_{\fp'}[x,f]$-modules, where
$L^{(n)}$ is an $n$-place extension of the $-n$th shift of a graded
product of copies of $\widetilde{E_{R_{\fp'}}(k(\fp'))}$ (for each
$n \in \nn$), for which there exists a homogeneous
$R_{\fp'}[x,f]$-monomorphism
\[
\nu : R_{\fp'}[x,f]\otimes_{R_{\fp'}}E_{R_{\fp'}}(k(\fp')) =
\bigoplus_{i\in
\nn}(R_{\fp'}x^i\otimes_{R_{\fp'}}E_{R_{\fp'}}(k(\fp'))) \lra
\prod_{n\in\nn}{\textstyle ^{^{^{\!\!\Large \prime}}}} L^{(n)} =: K.
\]
Since $\big(R_{\fp'}\big)_{c''/1}$, the ring of fractions of
$R_{\fp'}$ with respect to the set of powers of the element $c''/1$
of $(R_{\fp'})^{\circ}$, is a ring of fractions of $R_{c''}$, it is
regular. Note also that $c''/1 \in (R_{\fp'})^{\circ}$. By Theorem
\ref{jan.17}, there is a $j'\in\N$ such that $(c''/1)^{j'}$ is a big
test element for $R_{\fp'}$; hence there exists $j \in \N$ with $j
\geq h$ such that $(c''/1)^{p^j}$ is a big test element for
$R_{\fp'}$, so that
$$\Delta_{R_{\fp'}}\!\left(R_{\fp'}[x,f]\otimes_{R_{\fp'}}E_{R_{\fp'}}(k(\fp'))\right)
\subseteq
\ann_{R_{\fp'}[x,f]\otimes_{R_{\fp'}}E_{R_{\fp'}}(k(\fp'))}\left({\textstyle
\bigoplus_{n\geq h}R_{\fp'}(c''/1)^{p^j}x^n}\right).
$$
Thus
$\nu\left(\Delta_{R_{\fp'}}\!\left(R_{\fp'}[x,f]\otimes_{R_{\fp'}}E_{R_{\fp'}}(k(\fp'))\right)\right)
\subseteq \ann_{K}\left({\textstyle \bigoplus_{n\geq
h}R_{\fp'}(c''/1)^{p^j}x^n}\right), $ and so
$$
\left(0:_{R_{\fp'}}\ann_{K}\left({\textstyle \bigoplus_{n\geq
h}R_{\fp'}(c''/1)^{p^j}x^n}\right)\right) \subseteq
\left(0:_{R_{\fp'}}\Delta_{R_{\fp'}}\!\left(R_{\fp'}[x,f]\otimes_{R_{\fp'}}
E_{R_{\fp'}}(k(\fp'))\right)\right).
$$
However, by Examples \ref{jan.10}, \ref{jan.11} and \ref{jan.12},
Proposition \ref{jan.13}(iii) and Lemma \ref{nlrn.1},
\begin{align*}
\left(0:_{R_{\fp'}}\ann_{K}\left({\textstyle \bigoplus_{n\geq
h}R_{\fp'}(c''/1)^{p^j}x^n}\right)\right) & =
\left(0:_{R_{\fp'}}\ann_{E_{R_{\fp'}}(k(\fp'))}\left({\textstyle
\bigoplus_{n\geq h}R_{\fp'}(c''/1)^{p^j}x^n}\right)\right)\\ &=
\left(0:_{R_{\fp'}}\ann_{E_{R_{\fp'}}(k(\fp'))}\left({\textstyle
\bigoplus_{n\geq h}R_{\fp'}(c''/1)^{p^h}x^n}\right)\right)\\ &=
\left(0:_{R_{\fp'}}\left(0:_{E_{S_{\fq'}}(k(\fq))} \ft
S_{\fq'}\right)\right) = (\ft/\fa)R_{\fp'}
\end{align*}
because an $R_{\fp'}$-module and its Matlis dual have equal
annihilators (even though the local ring $R_{\fp'}$ need not be
complete). Therefore $$(\ft/\fa)R_{\fp'} \subseteq (0:_{R_{\fp'}}
\Delta_{R_{\fp'}}(R_{\fp'}[x,f]\otimes_{R_{\fp'}}E_{R_{\fp'}}(k(\fp')))).$$

Recall that $\height \ft/\fa > 0$; let $c'$ be an arbitrary element
of $(\ft/\fa)\cap R^{\circ} = (\ft/\fa) \setminus\{0\}$. Then, in
$R_{\fp'}$,
$$
c'/1 \in (0:_{R_{\fp'}}
\Delta_{R_{\fp'}}(R_{\fp'}[x,f]\otimes_{R_{\fp'}}
E_{R_{\fp'}}(k(\fp')))),$$ so that $c' \in (0:_R
\Delta_R(R[x,f]\otimes_R E_R(R/\fp')))$ by Proposition
\ref{feb.1}(iv). Note that this is true for all $\fp' \in\Spec(R)$.
In particular, $c' \in (0:_R \Delta_R(R[x,f]\otimes_R E_R(R/\fm)))$
for all $\fm \in \Max(R)$.

Now it is easy to see that there is a homogeneous isomorphism of
graded left $R[x,f]$-modules
$$
R[x,f] \otimes_R \left({\textstyle \bigoplus_{\fm \in
\Max(R)}E_R(R/\fm)} \right) \cong {\textstyle \bigoplus_{\fm \in
\Max(R)} \left(R[x,f]\otimes_RE_R(R/\fm)\right)},
$$
and to deduce from this that
$
c' \in \left(0:_R \Delta_R\left(R[x,f] \otimes_R \left({\textstyle
\bigoplus_{\fm \in \Max(R)}E_R(R/\fm)} \right)\right)\right).
$
Since $E := \bigoplus_{\fm \in \Max(R)}E_R(R/\fm)$ is an injective
cogenerator of $R$, Theorem \ref{pl.5c} shows that $c'$ is a big
test element for $R$. Now $(0:_R\Delta(R[x,f]\otimes_RE))$ is, by
Theorem \ref{pl.5c}, equal to the big test ideal of $R$. As $c'$ was
an arbitrary element of $(\ft/\fa)\cap R^{\circ} = (\ft/\fa)
\setminus\{0\}$ and such elements generate $\ft/\fa$, it follows
that $\ft/\fa$ is contained in the big test ideal of $R$.

We now return to consideration of the particular $\fq \in
\Var(\fa)$, chosen in the first paragraph of this proof, such that
$R_{\fq/\fa}$ is Gorenstein and weakly $F$-regular. Write $\fp :=
\fq/\fa$. Recall that our choice of $u$ is such that $ \left((\fa
S_{\fq})^{[p]} : \fa S_{\fq}\right) = (\fa S_{\fq})^{[p]} +
(u/1)S_{\fq}.$ Since $R_{\fp}$ is also excellent, Proposition
\ref{hrlr.12} shows that $E_{R_{\fp}}(k(\fp))$, with its
$R_{\fp}[x,f]$-module structure defined above (in the particular
case in which $\fp' = \fp$) is a simple left $R_{\fp}[x,f]$-module.
By our discussion above for a general $\fp' \in \Spec(R)$, we have
\begin{align*}
\left\{\xi \in E_{R_{\fp}}(k(\fp)) : (c''/1)^{p^h}x^n\xi= 0
\text{~for all~} n \geq h\right\} & = \left(0:_{E_{S_{\fq}}(k(\fq))}
\ft S_{\fq}\right)\\ & = \left(0:_{E_{R_{\fp}}(k(\fp))} (\ft/\fa)
R_{\fp}\right)\mbox{;}
\end{align*}
as this is an $R_{\fp}[x,f]$-submodule of $E_{R_{\fp}}(k(\fp))$, and
as $\height \ft/\fa >0$, we must have $(\ft/\fa) R_{\fp} = R_{\fp}$,
so that $\ft/\fa \not\subseteq \fp$. Thus the big test ideal of $R$
is not contained in $\fp$.

We have now shown that $(0:_R\Delta(R[x,f]\otimes_RE))$, the big
test ideal of $R$, is not contained in any  $\fp'' \in \Spec (R)$
for which $R_{\fp''}$ is Gorenstein and weakly $F$-regular. It
follows that, if $c \in R^{\circ}$ is such that $R_c$ is Gorenstein
and weakly $F$-regular, then $c \in
\sqrt{(0:_R\Delta(R[x,f]\otimes_RE))}$, so that some power of $c$ is
a big test element for $R$.
\end{proof}

\begin{cor} \label{jul.cdom} Suppose that $R$ is a homomorphic
image of a regular ring $S$ of characteristic $p$ that is
$F$-finite; assume that $R$ satisfies condition $(R_0)$. Let $c\in
R^{\circ}$ be such that $R_c$ is Gorenstein and weakly $F$-regular.
Then there is a power of $c$ that is a big test element for $R$.
\end{cor}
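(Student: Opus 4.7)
The plan is simply to verify that the hypotheses of Theorem \ref{nlrn.2} hold in this situation; once they do, the conclusion of that theorem is precisely the statement of the corollary. Write $R = S/\fa$ for a proper ideal $\fa$ of $S$ (we may assume $R \neq 0$, as otherwise there is nothing to prove). The condition $(R_0)$ on $R$ is assumed, so the only things needing verification are (a) that $S$ is $F$-$\cap$-flat, and (b) that $S$ is excellent.

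For (a), I would appeal directly to the observation made in the paragraph immediately following Definitions \ref{if.1}: when $S$ is $F$-finite and regular, $S$ viewed as a module over itself via the Frobenius $f$ is finitely generated and flat (flatness coming from Kunz's regularity criterion), hence projective, and therefore $f$ is intersection-flat. For (b), I would invoke Kunz's theorem (\emph{On Noetherian rings of characteristic $p$}, Amer.\ J.\ Math.\ 98 (1976), 999--1013), which asserts that every $F$-finite Noetherian ring of prime characteristic is excellent; applied to $S$, this gives condition (b).

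With (a), (b), and $(R_0)$ in hand, $R = S/\fa$ fits exactly the hypotheses of Theorem \ref{nlrn.2}, and since $c \in R^{\circ}$ is assumed to be such that $R_c$ is Gorenstein and weakly $F$-regular, that theorem yields a power of $c$ which is a big test element for $R$. There is no real obstacle to overcome here; the corollary is essentially a repackaging of Theorem \ref{nlrn.2} once one recognises that the $F$-finiteness of the regular ring $S$ supplies the two additional hypotheses ($F$-$\cap$-flatness and excellence) automatically. The only non-trivial input is the citation of Kunz's theorem for excellence; everything else is transparent from the machinery already developed in the paper.
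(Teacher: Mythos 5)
Your proof is correct and follows essentially the same route as the paper's own argument: the paper likewise cites Kunz's theorem for the excellence of the $F$-finite regular ring $S$ and the remark after Definitions \ref{if.1} for $F$-$\cap$-flatness, and then invokes Theorem \ref{nlrn.2}. Your write-up simply spells out these two checks in a little more detail.
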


\begin{proof} Since $S$ is $F$-finite, it is excellent, by E. Kunz
\cite[Theorem 2.5]{Kunz76}; also, $S$ is $F$-$\cap$-flat, by an
earlier remark. The claim therefore follows from Theorem
\ref{nlrn.2}.
\end{proof}

We can enlarge the class of rings to which the ideas of this section
can be applied by use of Proposition \ref{jul.4}.

\begin{cor}\label{jul.5}
Suppose that $R \subseteq R'$ is a faithfully flat extension of
excellent rings of characteristic $p$ such that all the fibre rings
of the inclusion ring homomorphism are regular, and such that $R'$
is a homomorphic image of an excellent regular ring $S$ of
characteristic $p$ that is $F$-$\cap$-flat.

Suppose that $R$ satisfies condition $(R_0)$, and that $c \in
R^{\circ}$ is such that $R_c$ is Gorenstein and weakly $F$-regular.
Then some power of $c$ is a big test element for $R$.
\end{cor}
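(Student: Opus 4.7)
The plan is to transfer the problem from $R$ to $R'$, apply Theorem~\ref{nlrn.2} there, and then descend back to $R$ using Proposition~\ref{jul.4}. Observe first that, since $c \in R^{\circ}$ and powers of elements of $R^{\circ}$ again lie in $R^{\circ}$, and since (by faithful flatness and going-down for flat extensions) every minimal prime of $R'$ contracts to a minimal prime of $R$, we have $c^n \in R^{\circ} \subseteq R'^{\circ}$ for every $n \in \N$. Consequently, by Proposition~\ref{jul.4}, it will suffice to find an integer $n$ such that $c^n$ is a big test element for $R'$.

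To invoke Theorem~\ref{nlrn.2} for $R'$ I must check two things: that $R'$ satisfies $(R_0)$, and that $R'_c$ is Gorenstein and weakly $F$-regular. The second is immediate from Corollary~\ref{sr.2}. For the first, let $\fP$ be a minimal prime of $R'$ and set $\fp := \fP \cap R$. By going-down for the flat extension $R \subseteq R'$, the ideal $\fp$ is a minimal prime of $R$; since $R$ satisfies $(R_0)$ we have $\fp R_{\fp} = 0$, so the fibre ring of $R \lra R'$ at $\fp$ coincides with $R'_{\fp}$, which is regular by hypothesis. The same going-down argument shows that $\fP R'_{\fp}$ is a minimal prime of the regular ring $R'_{\fp}$, hence the localisation $R'_{\fP} = (R'_{\fp})_{\fP R'_{\fp}}$ is a regular local ring whose maximal ideal coincides with its unique minimal prime, that is, a field. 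Therefore $R'$ satisfies $(R_0)$.

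With both hypotheses verified, Theorem~\ref{nlrn.2} applied to $R'$ and to $c \in R'^{\circ}$ yields $n \in \N$ such that $c^n$ is a big test element for $R'$; a final appeal to Proposition~\ref{jul.4} then shows that $c^n$ is a big test element for $R$. I do not anticipate any serious obstacle here: essentially the whole proof is a bookkeeping combination of Theorem~\ref{nlrn.2}, Corollary~\ref{sr.2}, and Proposition~\ref{jul.4}, and the only step that requires a short argument of its own is the transfer of the condition $(R_0)$ from $R$ to $R'$ described above.
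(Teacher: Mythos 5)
Your proof is correct and follows essentially the same route as the paper's: verify the hypotheses of Theorem~\ref{nlrn.2} for $R'$ via Corollary~\ref{sr.2}, apply that theorem to obtain a power of $c$ that is a big test element for $R'$, and descend to $R$ via Proposition~\ref{jul.4}. The only difference is that you spell out the short going-down argument showing that $R'$ inherits condition $(R_0)$ (and that $R^{\circ} \subseteq R'^{\circ}$), a point the paper asserts without comment.
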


\begin{proof} It follows from Corollary \ref{sr.2} that $R'_c$ is
Gorenstein and weakly $F$-regular; note also that $c \in R'^{\circ}$
and that $R'$ satisfies $(R_0)$. By Theorem \ref{nlrn.2}, there
exists $n \in \N$ such that $c^n$ is a big test element for $R'$,
and we can conclude from Proposition \ref{jul.4} that $c^n$ is a big
test element for $R$.
\end{proof}

We can now give an example that has some resonance with the
Hochster--Huneke Theorem \ref{in.1}.

\begin{thm}\label{jul.8} Suppose that $R$ is an algebra of finite type over
an excellent local ring $A$ of characteristic $p$ whose residue
field $\K$ is $F$-finite. If $R$ satisfies the condition $(R_0)$,
then $R$ has a big test element.

In fact, if $c \in R^{\circ}$ is such that $R_c$ is Gorenstein and
weakly $F$-regular, then some power of $c$ is a big test element for
$R$.
\end{thm}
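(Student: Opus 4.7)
The plan is to reduce Theorem \ref{jul.8} to a direct application of Corollary \ref{jul.5}, by constructing a suitable faithfully flat extension $R \subseteq R'$ whose codomain is a homomorphic image of an $F$-$\cap$-flat excellent regular ring.

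First, since $R$ is of finite type over the excellent local ring $A$, the ring $R$ itself is excellent, and I may write $R = A[Y_1,\ldots,Y_m]/\fa$ for some ideal $\fa$ and some $m \in \nn$. I set $R' := \widehat{A} \otimes_A R \cong \widehat{A}[Y_1,\ldots,Y_m]/\fa\widehat{A}[Y_1,\ldots,Y_m]$, which is excellent (being finitely generated over the excellent $\widehat{A}$) and receives $R$ as a faithfully flat subring (because $A \to \widehat{A}$ is faithfully flat). The fibre rings of $R \to R'$ are regular: the map $A \to \widehat{A}$ is a regular morphism since $A$ is excellent (its formal fibres are geometrically regular), and the class of regular morphisms is preserved under arbitrary base change, so $R \to R'$ is itself a regular morphism, in particular flat with regular fibres.

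Next, I invoke I.~S. Cohen's Structure Theorem: since $\widehat{A}$ is a complete local ring of characteristic $p$ with residue field $\K$, there exist $n \in \nn$ and a surjection $\K[[X_1,\ldots,X_n]] \twoheadrightarrow \widehat{A}$. Consequently $R'$ is a homomorphic image of the excellent regular ring $S := \K[[X_1,\ldots,X_n]][Y_1,\ldots,Y_m]$ of characteristic $p$. Because $\K$ is $F$-finite, $\K^{1/p}$ is a finite extension of $\K$, and therefore $S$ is $F$-$\cap$-flat by Examples \ref{if.2}(iii).

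All the hypotheses of Corollary \ref{jul.5} are now in place: $R \subseteq R'$ is a faithfully flat extension of excellent characteristic $p$ rings with regular fibre rings; $R'$ is a homomorphic image of the excellent regular $F$-$\cap$-flat ring $S$; $R$ satisfies $(R_0)$; and $c \in R^{\circ}$ is such that $R_c$ is Gorenstein and weakly $F$-regular. Corollary \ref{jul.5} then yields that some power of $c$ is a big test element for $R$, proving both assertions of the theorem. There is no real obstacle here — every input has been prepared by the preceding sections; the only point requiring a moment's thought is the preservation of regularity of fibres under base change, which is standard, and the observation that Cohen's theorem supplies the requisite presentation of $\widehat{A}$ over a power series ring with $F$-finite residue field.
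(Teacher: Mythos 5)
Your proposal is correct and follows essentially the same route as the paper: both set $R' := \widehat{A} \otimes_A R$, use regularity of $A \to \widehat{A}$ (plus stability of regular morphisms under base change) to verify the fibre condition, present $R'$ as a quotient of $\K[[X_1,\ldots,X_n]][Y_1,\ldots,Y_m]$ via Cohen's Structure Theorem, invoke Example \ref{if.2}(iii), and conclude by Corollary \ref{jul.5}. Your version just spells out a few routine points (excellence of $R$ and $R'$, preservation of regular fibres under base change) that the paper leaves implicit.
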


\begin{proof} Let $\phi :A \lra \widehat{A}$ be the inclusion
homomorphism. Since $A$ is excellent, $\phi$ is a regular
homomorphism (see \cite[p.\ 256]{HM}). By \cite[p.\ 253]{HMold} the
induced (faithfully flat) ring homomorphism $R \lra \widehat{A}
\otimes_AR$ is also regular.

By I. S. Cohen's Structure Theorem for complete local rings,
$\widehat{A}$ is a homomorphic image of $\K[[Y_1, \ldots, Y_m]]$ for
some indeterminates $Y_1, \ldots, Y_m$. Therefore $\widehat{A}
\otimes_AR$ is a homomorphic image of $\K[[Y_1, \ldots, Y_m]][X_1,
\ldots, X_n]$, and this is an excellent $F$-$\cap$-flat regular
ring, by Example \ref{if.2}(iii). The claim therefore follows from
Corollary \ref{jul.5}.
\end{proof}

In one respect, Theorem \ref{jul.8} is inferior to the
Hochster--Huneke Theorem \ref{in.1} because it only applies to an
algebra of finite type over an excellent local ring of
characteristic $p$ that has $F$-finite residue field; on the other
hand, it establishes the existence of big test elements and applies
when $R$ satisfies the condition $(R_0)$ (as well as the other
hypotheses).

Finally, it should be noted that arguments have been provided in
this paper so that its results can be proved without use of the
Gamma construction.


\begin{thebibliography}{00}

\bibitem{Aberb01}{\sc I.~M.~Aberbach}, `Extension of weakly and
 strongly $F$-regular rings by flat maps', {\it J. Algebra\/} 241 (2001)
 799--807.

\bibitem{Blickle}{\sc M.~Blickle}, \textit{The intersection homology
 $D$-module in finite characteristic\/}, PhD dissertation, University
 of Michigan, Ann Arbor, 2001.

\bibitem{BB}{\sc M.~Blickle} and {\sc G. Boeckle}, `Cartier modules: finiteness
 results', {\it J. reine angew.\ Math.}, to appear; arXiv math.AC
 0909.2531v1.

\bibitem{BlMuSm08}{\sc M.~Blickle}, {\sc M. Mustata} and {\sc K. E.
 Smith}, `Discreteness and rationality of $F$-thresholds', {\it Michigan
 Math.\ J.} 57 (2008) 43-–61.

\bibitem{BSTZ10}{\sc M.~Blickle}, {\sc K. Schwede}, {\sc S. Takagi}
 and {\sc W. Zhang}, `Discreteness and rationality of $F$-jumping numbers on singular
 varieties', {\it Math.\ Ann.}\ 347 (2010) 917–-949.

\bibitem{Fedde83}{\sc R.~Fedder}, `$F$-purity and rational singularity',
 {\it Transactions Amer.\ Math.\ Soc.}\ 278 (1983)
 461--480.

\bibitem{Gabber}{\sc O.~Gabber}, `Notes on some $t$-structures',
 {\it Geometric aspects of Dwork theory\/}, Vol.\ I,II (Walter de
 Gruyter GmbH \& Co.\ KG, Berlin, 2004), pp.\ 711--734.

\bibitem{HarTak04}{\sc N.~Hara} and {\sc S. Takagi}, `On a generalization of test
 ideals', {\it Nagoya Math.\ J.} 175 (2004) 59-–74.

\bibitem{HarYos03}{\sc N.~Hara} and {\sc K.-i. Yoshida}, `A
 generalization of tight closure and multiplier ideals', {\it Transactions Amer.\
 Math.\ Soc}.\ 355 (2003) 3143--3174.

\bibitem{HS}{\sc R.~Hartshorne} and {\sc R. Speiser}, `Local
    cohomological dimension in characteristic $p$', {\it Ann. of
    Math.}\ 105 (1977) 45--79.

\bibitem{HocHun90}{\sc M.~Hochster} and {\sc C. Huneke}, `Tight closure,
 invariant theory and the Brian\c{c}on-Skoda Theorem', {\it J. Amer.\ Math.\
 Soc.}\ 3 (1990) 31--116.

\bibitem{HocHun94}{\sc M.~Hochster} and {\sc C. Huneke},
 `$F$-regularity, test elements, and smooth base change',
 {\it Transactions Amer.\ Math.\ Soc.}\ 346 (1994)
 1--62.

\bibitem{HocRob74}{\sc M.~Hochster} and {\sc J. L. Roberts}, `Rings of invariants of
 reductive groups acting on regular rings are Cohen--Macaulay',
 {\it Advances in Math.}\ 13 (1974) 115--175.

\bibitem{Hunek96}{\sc C.~Huneke}, {\it Tight closure and its applications\/},
 Conference Board of the Mathematical Sciences Regional Conference Series
 in Mathematics 88 (American Mathematical Society, Providence, 1996).

\bibitem{Hunek98}{\sc C.~Huneke}, `Tight closure, parameter ideals and
 geometry', {\it Six lectures on commutative algebra\/}, Eds.\ J. Elias, J. M. Giral,
 R. M. Mir\'o-Roig and S. Zarzuela, Progress in Mathematics 166
 (Birkh\"auser, Basel, 1998), pp.\ 187--239.

\bibitem{IK}{\sc I.~Kaplansky}, {\it Commutative rings\/} (Allyn and
 Bacon, Boston, 1970).

\bibitem{Katzm08}{\sc M.~Katzman}, `Parameter-test-ideals of
 Cohen--Macaulay rings', {\it Compositio Math}.\ 144 (2008)
 933--948.

\bibitem{Kunz76}{\sc E~Kunz}, `On Noetherian rings of characteristic
 $p$', {\it American J.\ Math.}\ 98 (1976) 999--1013.

\bibitem{Ly}{\sc G.~Lyubeznik}, `$F$-modules: applications to
    local cohomology and $D$-modules in characteristic $p > 0$',
    {\it J. reine angew. Math.}\ 491 (1997) 65--130.

\bibitem{LyuSmi01}{\sc G.~Lyubeznik} and {\sc K. E. Smith}, `On the commutation of
 the test ideal with localization and completion', {\it Transactions Amer.\ Math.\
 Soc.}\ 353 (2001) 3149--3180.

\bibitem{HMold}{\sc H.~Matsumura}, {\it Commutative algebra: second
 edition\/} (Benjamin/Cummings, Reading, Mass., 1980).

\bibitem{HM}{\sc H.~Matsumura}, {\it Commutative ring theory\/}, Cambridge Studies in
 Advanced Mathematics 8 (Cambridge University Press, 1986).

\bibitem{Popes85}{\sc D.~Popescu}, `General N\'eron
 desingularization', {\it Nagoya Math. J.}\ 100 (1985) 97--126.

\bibitem{Popes86}{\sc D.~Popescu}, `General N\'eron
 desingularization and approximation', {\it Nagoya Math. J.}\ 104 (1986) 85--115.

\bibitem{Radu92}{\sc N.~Radu}, `Une classe d'anneaux No\'etheriens',
 {\it Rev.\ Roumaine Math.\ Pures Appl}.\ 37 (1992) 79--82.

\bibitem{Schwe09}{\sc K.~Schwede}, `$F$-adjunction', {\it Algebra Number
 Theory} 3 (2009) 907–-950.

\bibitem{HSLonly}{\sc R.~Y.~Sharp}, `On the
    Hartshorne--Speiser--Lyubeznik Theorem about Artinian modules
    with a Frobenius action', {\it Proc.\ Amer.\ Math.\ Soc.}\ 135 (2007) 665--670.

\bibitem{ga}
 {\sc R.~Y.~Sharp}, `Graded annihilators of modules over the
 Frobenius skew polynomial ring, and tight closure',
 {\it Transactions Amer.\ Math.\ Soc}.\ 359 (2007) 4237--4258.

\bibitem{gatcti}
 {\sc R.~Y.~Sharp}, `Graded annihilators and tight closure test ideals', {\it J.
 Algebra\/} 322 (2009) 3410--3426.

\bibitem{Fpurhastest}
 {\sc R.~Y.~Sharp}, `An excellent $F$-pure
 ring of prime characteristic has a big tight closure test element',
 {\it Transactions Amer.\ Math.\ Soc}.\ 362 (2010) 5455--5481.

\bibitem{ShaYos11}
 {\sc R.~Y.~Sharp} and {\sc Y. Yoshino}, `Right and left modules
 over the Frobenius skew polynomial ring in the $F$-finite case',
 {\it Math.\ Proc.\ Cambridge Philos.\ Soc.}\ 150 (2011) 419--438.

\bibitem{SV}
 {\sc D.~W.~Sharpe} and {\sc P. V\'amos}, {\it Injective modules\/},
 Cambridge Tracts in Mathematics and Mathematical Physics 62
 (Cambridge University Press, 1972).

\bibitem{Smi97}{\sc K.~E.~Smith}, `$F$-rational rings have rational
 singularities', {\it Amer.\ J. Math.}\ 119 (1997) 159--180.

\bibitem{Swan95}{\sc R.~G.~Swan}, `N\'eron--Popescu
 desingularization', {\it Algebra and Geometry (Taipei, 1995)}, Lect.\ Algebra Geom.\
 2 (International Press, Cambridge, Mass., 1998), pp.\ 135--192.

\bibitem{Takag10}{\sc S.~Takagi}, `Adjoint ideals along closed subvarieties of
 higher codimension', {\it J. reine angew.\ Math.}\ 641 (2010) 145–-162.



\end{thebibliography}
\end{document}